\def\expandafter\tikz@node@finish\expandafter{\expandafter\endgroup\expandafter\endpgfonlayer\tikz@node@finish}%
\setlist[itemize]{topsep=0pt,partopsep=0pt,itemsep=0pt,parsep=0pt}
\setlist[itemize,1]{label={\small\textbullet}}
\setlist[itemize,2]{label={\tiny\textbullet}}
\setlist[itemize,3]{label=$\cdot$}
\setlist[enumerate]{topsep=0pt,partopsep=0pt,itemsep=0pt,parsep=0pt}
\setlist[enumerate,1]{label=\roman*)}
\setlist[enumerate,2]{label=\alph*)}
\setlist[enumerate,3]{label=\arabic*)}
\newif\ifcomment
\title{Braces of perfect matching width two\thanks{This work was mainly conducted while all three authors were are the Technical University Berlin and supported by the European Research Council
		(ERC) under the European Union’s Horizon 2020 research and innovation programme (ERC Consolidator
		Grant DISTRUCT, grant agreement No 648527).}}
\date{}
\DeclareRobustCommand{\authorthing}{
	\begin{center}
	    Archontia C.\@ Giannopoulou -- Department of Informatics and Telecommunications, National and Kapodistrian University of Athens\\
	    \href{mailto:archontia.giannopoulou@gmail.com}{archontia.giannopoulou@gmail.com}\\
	    Meike Hatzel\thanks{Meike Hatzel's research was supported by the Federal Ministry of Education and Research (BMBF) and by a fellowship within the IFI programme of the German Academic Exchange Service (DAAD).} -- National Institute of Informatics, Tokyo\\
	    \href{mailto:research@meikehatzel.com}{research@meikehatzel.com}\\
	    Sebastian Wiederrecht\thanks{Supported by the ANR project ESIGMA (ANR-17-CE23-0010) and the Institute for Basic Science (IBS-R029-C1).} -- Institute for Basic Science, Daejeon, South Korea.\\
	    \href{mailto:sebastian.wiederrecht@gmail.com}{sebastian.wiederrecht@gmail.com}
\end{center}}
\author{\authorthing}
\begin{document}
\maketitle
\begin{textblock}{20}(-1.9,5.5)
   \includegraphics[width=80px]{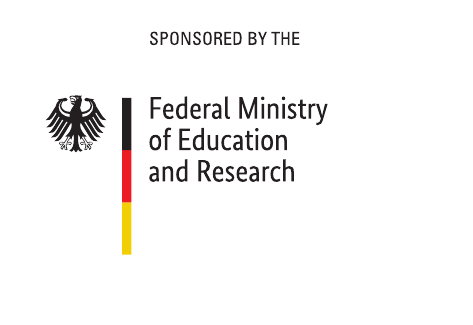}%
\end{textblock}

\begin{abstract}
Perfect matching width is a treewidth-like parameter designed for graphs with perfect matchings.
The concept was originally introduced by Norine for the study of non-bipartite Pfaffian graphs.
Additionally, perfect matching width appears to be a useful structural tool for investigating matching minors, a specialised version of minors related to perfect matchings.
In this paper we lay the groundwork for understanding the interaction of perfect matching width and matching minors by establishing tight connections between the perfect matching width of any matching covered graph $G$ and the perfect matching width of its bricks and braces (a matching theoretic version of blocks) and proving that perfect matching width is almost monotone under the matching minor relation.
As an application, we give several characterisations for braces of perfect matching width two, including one that allows for a polynomial time recognition algorithm.

\noindent \textbf{Keywords:} perfect matching; matching minor; perfect matching width; elimination ordering
\end{abstract}

	

\section{Introduction}

The significance of matching minors, especially for the study of matching theoretic properties of bipartite graphs, was first observed by Little\footnote{Although the word `matching minor' did not appear before the work of Norine and Thomas \cite{norine2005matching,norine2007generating}.} \cite{little1975characterization}, who used bisubdivisions to characterise Pfaffian bipartite graphs.
In the pursuit of a description of non-bipartite Pfaffian graphs, Norine introduced a matching theoretic analogue of treewidth, called \emph{perfect matching width}~\cite{norine2005matching}, and gave an algorithm that decides
whether a graph from a class of bounded perfect matching width is Pfaffian in XP-time.
While the general problem of characterising non-bipartite Pfaffian graphs remains open, perfect matching width appears to be a valuable tool for the study of matching minors.
For further information on Pfaffian graphs the reader is referred to the surveys of McCuaig and Thomas~\cite{mccuaig2004polya,thomas2006survey}.

Bipartite and non-bipartite graphs with perfect matchings behave in fundamentally different ways.
One particular point of contrast is the matching minor relation.
There exists an infinite anti-chain of bricks~\cite{norine2008minimally}, however, for braces, no infinite anti-chain of braces is known for the matching minor relation.
This hints at the existence of some more fundamental challenges regarding non-bipartite graphs that have to be overcome for establishing a theory of matching minors.
Hence in this paper we focus on bipartite graphs and braces in particular.
We mostly consider the class of \emph{matching covered} graphs, that is, graphs in which every edge lies in some perfect matching.
Please note that, since matching covered graphs encode all strongly connected digraphs (see for example~\cite{robertson1999permanents,hatzel2019cyclewidth}), matching covered graphs are already a significant and non-trivial class.

In the following we aim at denoting bipartite graphs with  a capital `$B$' instead of a `$G$' to highlight the parts where we specialise on bipartite graphs.
Moreover, we assume every bipartite graph $B$ to come with a two-partition into the \emph{colour classes} $V_1$ and $V_2$, where vertices of $V_1$ are usually depicted as filled vertices and those of $V_2$ are empty in figures.

We give a short overview of the main results of this paper.
In \cref{sec:basicproperties} we consider the following two questions:
How does the perfect matching width of a graph relate to the perfect matching width of its matching minors? And, can we determine the perfect matching width of a graph from the perfect matching width of its bricks and braces (its uniquely determined building blocks~\cite{lovasz1987matching})?
In particular we prove the following two theorems.

\begin{restatable}{theorem}{pmwmatminor}\label{thm:pmwandmatminors}
    Let $G$ be a matching covered graph and $H$ be a matching minor of $G$.
    Then $\pmw{H}\leq 2\pmw{G}$.
\end{restatable}

\begin{restatable}{theorem}{pmwbricksbraces}\label{thm:pmwandbricksandbraces}
Let $G$ be a matching covered graph and $H$ be a brick or brace of $G$ maximising $\pmw{H}$ over all bricks and braces of $G$.
Then $\frac{1}{2}\pmw{H}\leq\pmw{G}\leq\pmw{H}$.
\end{restatable}

Next we consider the graphs of small perfect matching width.
Having established that the maximum perfect matching width among the braces yields an upper bound on the perfect matching width of the graph itself, we start by characterising the braces of width two in \cref{sec:pmw2}.
We provide two different characterisations, one in terms of conformal subgraphs of a well-defined class of edge-maximal braces of perfect matching width two, which we refer to as \emph{bipartite ladders}, and the other in form of specific elimination orderings.

Both characterisations allow us to obtain further results.
In \cref{sec:comparissontotreewidth} we make use of the bipartite ladders to show that, while the perfect matching width of every graph is bounded from above by its treewidth, for every $k\in\N$ there exists a brace of perfect matching width two whose treewidth is at least $k$.
The characterisation via elimination orderings yields a polynomial time algorithm for the construction of a perfect matching decomposition of width two if one exists, which we prove in \cref{sec:algorithm}.

The factor two in \cref{thm:pmwandmatminors} stops us from directly lifting the results for braces to all bipartite graphs of perfect matching width two.
However, in \cref{sec:Mpmw}, we consider a more restrictive version of perfect matching width, the \emph{$M$-perfect matching width}, denoted $\Mpmw{M}{},$ which is known to be parametrically equivalent to perfect matching width~\cite{hatzel2019cyclewidth}.
For $M$-perfect matching width we are able to prove a stronger version of \cref{thm:pmwandmatminors} removing the factor two.

This allows us to give a full characterisation of all bipartite graphs $B$ that have a perfect matching $M$ such that $\Mpmw{M}{B}=2$.

\begin{figure}[!ht]
	\centering
	\begin{tikzpicture}[scale=0.9]
		\pgfdeclarelayer{background}
		\pgfdeclarelayer{foreground}
		\pgfsetlayers{background,main,foreground}
		
		\node[v:ghost] (mid) {};
		
		\node (k5) [v:ghost,position=180:25mm from mid] {};   
		\node (k33) [v:ghost,position=0:25mm from mid] {};
		
		\node (v1) [v:main,position=135:8mm from k5] {}; 
		\node (v2) [v:mainempty,position=225:8mm from k5] {};
		\node (v3) [v:main,position=315:8mm from k5] {};
		\node (v4) [v:mainempty,position=45:8mm from k5] {};
		\node (v5) [v:mainempty,position=135:16mm from k5] {};
		\node (v6) [v:main,position=225:16mm from k5] {};
		\node (v7) [v:mainempty,position=315:16mm from k5] {};
		\node (v8) [v:main,position=45:16mm from k5] {};
		
		\node (k5label) [v:ghost,position=270:21mm from k5] {cube};
		
		\node (u1) [v:main,position=126:8mm from k33] {};
		\node (u2) [v:mainempty,position=198:8mm from k33] {};
		\node (u3) [v:main,position=270:8mm from k33] {};
		\node (u4) [v:mainempty,position=342:8mm from k33] {};
		\node (u5) [v:main,position=54:8mm from k33] {};
		\node (u6) [v:mainempty,position=126:16mm from k33] {};
		\node (u7) [v:main,position=198:16mm from k33] {};
		\node (u8) [v:mainempty,position=270:16mm from k33] {};
		\node (u9) [v:main,position=342:16mm from k33] {};
		\node (u10) [v:mainempty,position=54:16mm from k33] {};

		\node (k33label) [v:ghost,position=270:21mm from k33] {$\mathscr{M}_{10}$};
		
		\begin{pgfonlayer}{background}
			
			\draw[e:main] (v1) to (v2);
			\draw[e:main] (v2) to (v3);
			\draw[e:main] (v3) to (v4);
			\draw[e:main] (v4) to (v1);
			
			\draw[e:main] (v5) to (v6);
			\draw[e:main] (v6) to (v7);
			\draw[e:main] (v7) to (v8);
			\draw[e:main] (v8) to (v5);
			
			\draw[e:main] (v1) to (v5);
			\draw[e:main] (v2) to (v6);
			\draw[e:main] (v3) to (v7);
			\draw[e:main] (v4) to (v8);
			
			\draw[e:main] (u1) to (u2);
			\draw[e:main] (u2) to (u3);
			\draw[e:main] (u3) to (u4);
			\draw[e:main] (u4) to (u5);
			\draw[e:main] (u5) to (u6);
			\draw[e:main] (u6) to (u7);
			\draw[e:main] (u7) to (u8);
			\draw[e:main] (u8) to (u9);
			\draw[e:main] (u9) to (u10);
			\draw[e:main] (u10) to (u1);
			
			\draw[e:main] (u1) to (u6);
			\draw[e:main] (u2) to (u7);
			\draw[e:main] (u3) to (u8);
			\draw[e:main] (u4) to (u9);
			\draw[e:main] (u5) to (u10);
			
		\end{pgfonlayer}
	\end{tikzpicture}
	\caption{The braces \emph{cube} and the Möbius ladder $\mathscr{M}_{10}$ of order 10.}
	\label{fig:cubeandM10}
\end{figure}

\begin{restatable}{theorem}{cubeormoebiustheorem}
    \label{thm:Mpmw2}
    Let $B$ be a bipartite graph with a perfect matching.
    The following statements are equivalent.
    \begin{enumerate}
	   \item There exists a perfect matching $M\in\Perf{B}$ such that $\Mpmw{M}{B}=2$,
	   \item for all $M\in\Perf{B}$ we have $\Mpmw{M}{B}=2$, and
	   \item $B$ does not contain the cube or $\mathscr{M}_{10}$ as a matching minor (see \cref{fig:cubeandM10}),
    \end{enumerate}
\end{restatable}

\subsection{Preliminaries}

All graphs considered in this article are finite and simple, that is, we do not allow for loops or parallel edges.
Let $G$ be a graph.
A set $M\subseteq\E{G}$ of edges is a \emph{matching} if no two edges in $M$ share an endpoint.
A matching $M$ is called \emph{perfect} if every vertex of $G$ is an endpoint of some edge of $M$.
We denote the set of perfect matchings of $G$ by $\Perf{G}$.
If $G$ is connected and every edge of $G$ is contained in some perfect matching of $G$ we say that $G$ is \emph{matching covered}.
The aim of Matching Theory is to study the structural properties of graphs with perfect matchings, and, within its context a plethora of results revealing a rich structural theory has appeared.
For an in-depth exposition of Matching Theory the interested reader is referred to the book by Lov\'asz and Plummer \cite{lovasz2009matching}.

For the study of perfect matchings, there are several adaptions of standard terminology from graph theory to make sure the existence of a perfect matching is preserved.
Let $G$ be a graph with a perfect matching $M$.
A set $X\subseteq\V{G}$ is \emph{conformal} if $G-X$ has a perfect matching, it is said to be \emph{$M$-conformal} if $M$ contains a perfect matching of $G-X$.
Similarly a subgraph $H\subseteq G$ is \emph{conformal} if $\V{H}$ is, and it is \emph{$M$-conformal} if $M$ contains perfect matchings of $G-\V{H}$ and of $H$.
A \emph{bisubdivision} of $G$ is a graph $G'$ obtained by replacing every edge of $G$ with a path of odd length (possibly one) whose internal vertices are fresh vertices.
A \emph{bicontraction} is the operation of contracting both edges incident with a vertex of degree two at the same time.
Finally, a \emph{matching minor} is a graph $H$ that can be obtained by a series of bicontractions from a conformal subgraph of $G$.
Note that, if $G'$ is a bisubdivision of $G$, then $G$ is indeed a matching minor of $G'$.

A subset of vertices $S$ in a graph $G$ is a \emph{separator}, if $G-S$ is not connected and $G$ is \emph{$k$-connected} if $G$ does not contain a separator of size $k.$
Let $G$ be a graph and $X\subseteq\V{G}$.
The \emph{edge cut} with \emph{shores} $X$ and $\Compl{X}\coloneqq\V{G}\setminus X$ is the set $\CutG{G}{X}\coloneqq\CondSet{e\in\E{G}}{e\cap X\neq\emptyset,~e\cap\Compl{X}\neq\emptyset}$.
The \emph{matching porosity} of $\CutG{G}{X}$ is the value
\begin{equation*}
	\MatPor{\CutG{G}{X}} \coloneqq \max_{M\in\Perf{G}}\Abs{M\cap\CutG{G}{X}}.
\end{equation*}
We say that a tree $T$ is \emph{cubic} if every non-leaf vertex of $T$ is of degree three.
By $\Leaves{T}$ we denote the set of leaves of any tree, and if $t_1t_2\in\E{T}$ is an edge of $T$, we denote by $T_{t_i}$ the unique component of $T-t_1t_2$ which contains $t_i$ for any $i\in\Set{1,2}$.
Let $T$ be a subcubic tree.
We can obtain a cubic tree $T'$ from $T$ by iteratively choosing a degree $2$ vertex and contracting one of its two incident edges.
The tree $T'$ is, up to isomorphism, uniquely determined by $T$ and we call $T'$ the tree obtained from $T$ by \emph{trimming}.
Note that $\Leaves{T}=\Leaves{T'}$.

A \emph{perfect matching decomposition} of $G$ is a tuple $\Brace{T,\DecompBijection{}}$ where $T$ is a cubic tree and $\DecompBijection{} \colon \Leaves{T}\rightarrow\V{G}$ is a bijection.
We associate with every edge $t_1t_2\in\E{T}$ a bipartition of $\V{G}$ into the sets $\DecompBijection{T_{t_i}}\coloneqq \bigcup_{\ell\in\Leaves{T_{t_i}}}\Set{\DecompBijection{\ell}}$ for both $i\in\Set{1,2}$.
Moreover, we can associate the edge cut $\CutG{G}{\DecompBijection{T_{t_1}}}=\CutG{G}{\DecompBijection{T_{t_2}}}$ of $G$ with $t_1t_2$.
As a shorthand, we denote this edge cut by $\CutG{G}{t_1t_2}$.
The \emph{width} of $\Brace{T,\DecompBijection{}}$ is defined as $\max_{e\in\E{T}}\MatPor{\CutG{G}{e}}$, and the \emph{perfect matching width} of $G$, denoted by $\pmw{G}$, is the minimum width over all perfect matching decompositions of $G$.

Let $M$ be a perfect matching of a graph $G$.
An \emph{$M$-decomposition} of $G$ is a perfect matching decomposition $\Brace{T,\DecompBijection{}}$ of $G$ such that for every edge $e=uv \in M$, the leaves $t_u$ and $t_v$ of $T$ with $\DecompBijection{t_u} = u$ and $\DecompBijection{t_v} = v$ are
both adjacent to a vertex $t_{uv}$ of $T$.
Note that a perfect matching decomposition $\Brace{T,\DecompBijection{}}$ of $G$ is an $M$-decomposition if and only if for every $t_1t_2\in\E{T-\Leaves{T}}$ the sets $\DecompBijection{T_{t_i}}$ are $M$-conformal.
The \emph{$M$-perfect matching width} of $G$, denoted by $\Mpmw{M}{G}$, is now the minimum width over all $M$-decompositions of $G$.
Perfect matching width and $M$-perfect matching width are parametrically equivalent.

\begin{restatable}[\cite{hatzel2019cyclewidth}\protect\footnote{Note that the statement slightly differs from the one in the original paper, it still follows from the proof provided there.}]{theorem}{mpmwandpmw}\label{thm:Mpmwandpmw}
Let $G$ be a graph with a perfect matching $M$.
Then $\frac{1}{2}\Mpmw{M}{G}\leq\pmw{G}\leq\Mpmw{M}{G}$.
\end{restatable}

\subsubsection*{Tight cuts, bricks and braces}

Similar to (undirected) treewidth and directed treewidth being used to study minors and butterfly minors, perfect matching width can be considered a tool for the study of matching minors.

Let $X\subseteq\V{G}$ be a set of vertices.
The edge cut $\CutG{G}{X}$ is called \emph{tight} if $\MatPor{\CutG{G}{X}}=1$, it is called \emph{trivial} if $\Abs{X}=1$ or $\Abs{\Compl{X}}=1$, we say that $X$ \emph{induces} a tight cut, if $\CutG{G}{X}$ is tight.
Note that the shores of a tight cut are always odd.
If $X$ is a shore of a tight cut, we call the operation of identifying $X$ into a single vertex $v_{X}$ and deleting all resulting loops and parallel edges a \emph{tight cut contraction}.
Observe that tight cut contractions of a matching covered graph are again matching covered.
Let $u$ be the unique vertex of $\Compl{X}$ which is incident with the edge of $M$ in $\CutG{G}{X}$, then we call the perfect matching \hypertarget{remainderDef}{$\Remainder{M}{G_X} \coloneqq \Brace{M\cap \E{\InducedSubgraph{G}{\Compl{X}}}}\cup\Set{uv_X}$} the \emph{residual} of $M$ in $G_X$.

A matching covered graph without a non-trivial tight cut is called a \emph{brace} if it is bipartite and a \emph{brick} otherwise.
So every matching covered graph either is a brace, a brick or has a \nontrivial tight cut and therefore can be decomposed into two smaller matching covered graphs.
One can continue with this process of decomposing along tight cuts in the two smaller graphs until there are no more \nontrivial tight cuts to be found.
This process yields a list of bricks and braces and is known as the \emph{tight cut decomposition} of $G$.
A famous result by Lov{\'a}sz states that this list depends only on $G$ and does not depend on the choices of the tight cuts, thus it is unique for every fixed graph $G$.
	
\begin{theorem}[\cite{lovasz1987matching}]\label{thm:lovasztightcuts}
	Any two tight cut decomposition procedures of a matching covered graph $G$ yield the same list of bricks and braces.
\end{theorem}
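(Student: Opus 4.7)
The plan is induction on $\Abs{\V{G}}$. The base case is immediate: if $G$ has no \nontrivial tight cut, then $G$ is itself a brick or a brace, every decomposition procedure halts after the first step, and each returns the singleton list $[G]$. For the inductive step, let $\Cut{X}$ and $\Cut{Y}$ be the first \nontrivial tight cuts used by two different procedures $P_1$ and $P_2$ respectively; I need to show that their final outputs agree as multisets of bricks and braces.

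The central technical ingredient would be an \emph{uncrossing lemma}: if $X, Y \subseteq \V{G}$ are shores of \nontrivial tight cuts and $\Cut{X}$, $\Cut{Y}$ \emph{cross}, meaning that $X \cap Y$, $X \setminus Y$, $Y \setminus X$ and $\V{G} \setminus (X \cup Y)$ are all non-empty, then, depending on the parities of the four intersections, one of the two pairs $\{\Cut{X \cap Y}, \Cut{X \cup Y}\}$ or $\{\Cut{X \setminus Y}, \Cut{Y \setminus X}\}$ consists of two tight cuts. The proof of this lemma would exploit the fact that $\Abs{M \cap \Cut{X}} = \Abs{M \cap \Cut{Y}} = 1$ for every $M \in \Perf{G}$, count in how many of the derived cuts an edge of $M$ can appear, and combine this with \cref{lemma:parityporosity}, which already forces the parity of $\matPor{\Cut{Z}}$ to match that of $\Abs{Z}$.

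With the uncrossing lemma in hand, I would split into two cases. In the laminar case, where $\Cut{X}$ and $\Cut{Y}$ do not cross, the two contractions commute: $\Cut{Y}$ survives as a tight cut in $G_X$ and vice versa, so $P_1$ and $P_2$ can both be steered into the common intermediate graph $(G_X)_Y = (G_Y)_X$, after which the inductive hypothesis applies to the resulting strictly smaller graphs and forces the two lists to coincide. In the crossing case, I would use the uncrossing lemma to extract a pair of laminar tight cuts lying inside $X$ or $Y$, contract those first, and thereby reduce once more to the laminar situation or to smaller instances covered by induction.

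The main obstacle will be the uncrossing lemma itself, specifically the parity bookkeeping that decides which of the two candidate pairs of derived cuts is the tight one, together with the verification that those derived cuts remain \nontrivial so that the inductive step genuinely decreases $\Abs{\V{G}}$. A secondary technicality is to argue that a tight cut of $G$ disjoint from the contracted shore descends to a tight cut of the contracted graph, and conversely that every tight cut of a contraction lifts back to a tight cut of $G$; this is needed to glue the two half-decompositions produced on either side of $\Cut{X}$ and $\Cut{Y}$. Once these pieces are in place, the inductive reconciliation of $P_1$ and $P_2$ is a straightforward combinatorial exercise.
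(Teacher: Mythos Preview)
The paper does not prove \cref{thm:lovasztightcuts}; it is quoted as a known result from Lov{\'a}sz~\cite{lovasz1987matching} and used as a black box throughout. There is therefore no proof in the paper to compare your proposal against.

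That said, your sketch is essentially the classical argument from the original reference. The uncrossing lemma for tight cuts is indeed the heart of the matter: for crossing tight cuts $\Cut{X}$ and $\Cut{Y}$ one shows that either $\Cut{X\cap Y}$ and $\Cut{X\cup Y}$, or $\Cut{X\setminus Y}$ and $\Cut{Y\setminus X}$, are again tight, the choice being governed by the parities of the four corners. Your plan to reduce the crossing case to the laminar one via uncrossing, and then to use commutativity of laminar tight cut contractions together with induction on $\Abs{\V{G}}$, is the standard route. One point you should be careful about is that the uncrossing step may produce a \emph{trivial} tight cut (a shore of size one), and you need to argue that this still suffices to push the induction through; Lov{\'a}sz handles this by showing that in fact one can always find a maximal laminar family of tight cuts refining any given tight cut, which sidesteps the issue. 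Otherwise your outline is sound.
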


We say that two cuts $\Cut{S},\Cut{T}$ \emph{cross} if all four of the following sets $S\cap T,$ $S\cap \Compl{T},$ $\Compl{S} \cap T,$ $\Compl{S} \cap \Compl{T}$ are non-empty, otherwise $\Cut{S}$ and $\Cut{T}$ are called \emph{laminar}.
\Cref{thm:lovasztightcuts} states that all maximal families of pairwise laminar tight cuts of a matching covered graph $G$ yield the same list of bricks and braces.

Many matching theoretical concepts can be expressed in terms of matching minors.
Especially in bipartite graphs they play a huge role as the following results by Lucchesi et al.~illustrate.
	
\begin{lemma}[\cite{lucchesi2015thin}]\label{lemma:lucchesimatminor}
	Let $G$ be a bipartite matching covered graph and $\Cut{Z}$ a \nontrivial tight cut in $G$.
	Then the two $Z$-contractions of $G$ are matching minors of $G$.
\end{lemma}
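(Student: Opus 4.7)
I would prove the claim for $G_{\overline{Z}}$ (the contraction identifying the shore $\overline{Z}:=\Fkt{V}{G}\setminus Z$); the case of $G_Z$ follows by swapping the two shores of $\Cut{Z}$. The plan is induction on $|\overline{Z}|$, which is odd because $|M\cap\Cut{Z}|=1$ by tightness and $|\overline{Z}\cap A|\neq|\overline{Z}\cap B|$ in a bipartition $(A,B)$ of $G$.

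To set up, fix $M\in\Perf{G}$ and let $uv$ be the unique edge of $M\cap\Cut{Z}$ with $u\in Z$ and $v\in\overline{Z}$; then $M\setminus\{uv\}$ restricts to a perfect matching of $G[\overline{Z}\setminus\{v\}]$. For the base case $|\overline{Z}|=3$, bipartiteness and the parity constraint force $\overline{Z}$ to consist of two vertices in one colour class and a single vertex $w$ in the other, and tightness plus matching-coveredness force $w$ to have degree exactly $2$ in $G$ with $\overline{Z}$ inducing a path of length $2$ centred at $w$. Hence $G_{\overline{Z}}$ is obtained from $G$ by the single bicontraction at $w$, which is a matching-minor operation by definition.

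For the inductive step $|\overline{Z}|\geq 5$, I would split into two cases. \emph{Case 1:} $G$ admits a non-trivial tight cut $\Cut{Y}$ with $Y\subsetneq\overline{Z}$ and $|Y|\geq 3$. By the inductive hypothesis applied to the strictly smaller shore $Y$, the contraction $G_Y$ is a matching minor of $G$. In $G_Y$ the set $Z$ is unchanged, and the image of $\overline{Z}$ is $\overline{Z}^{\ast}:=(\overline{Z}\setminus Y)\cup\{v_Y\}$ with $|\overline{Z}^{\ast}|<|\overline{Z}|$; a short check shows $\Cut{\overline{Z}^{\ast}}$ is again a non-trivial tight cut of $G_Y$ and that $(G_Y)_{\overline{Z}^{\ast}}$ coincides on the nose with $G_{\overline{Z}}$. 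Applying the inductive hypothesis inside $G_Y$ exhibits $G_{\overline{Z}}$ as a matching minor of $G_Y$, and hence of $G$. \emph{Case 2:} no non-trivial tight cut of $G$ has its shore strictly contained in $\overline{Z}$. Here I would build a conformal subgraph $H$ of $G$ by deleting carefully chosen edges inside $\overline{Z}$ so that $H[\overline{Z}]$ becomes an $M$-alternating spanning tree rooted at $v$, and then collapse this tree to a single vertex by iterated bicontractions performed from the leaves inward, at each step using the $M$-alternating structure to guarantee a valid degree-$2$ vertex whose bicontraction merges a tree-vertex into the growing representative of $\overline{Z}$.

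The main obstacle is Case 2. Braces typically have minimum degree at least $3$, so degree-$2$ vertices do not exist \emph{a priori}; they have to be created by edge-deletions while keeping $V(H)=V(G)$ and the resulting subgraph conformal, which is delicate. The absence of internal tight cuts is precisely what enables the construction: it forces every vertex of $Z$ that is adjacent to $\overline{Z}$ in $G$ to be reachable from $v$ via an $M$-alternating path of the correct parity inside $G[\overline{Z}]$, so that the alternating-tree bicontraction sequence can be assembled to terminate in exactly $G_{\overline{Z}}$ with the correct neighbourhood at the contraction vertex. Making this construction fully precise, and verifying that no bicontraction step produces an accidental merger of two distinct $Z$-vertices, is the technical heart of the argument.
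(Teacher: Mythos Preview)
The paper does not prove this lemma---it is quoted from \cite{lucchesi2015thin} without argument---so there is no in-paper proof to compare against. On its own merits your outline is sound through the base case and Case~1, but Case~2, which you rightly flag as the crux, has two concrete issues.

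First, ``from the leaves inward'' targets the wrong vertices. In an $M$-alternating tree rooted at the exposed vertex $v\in\overline{Z}\cap B$, every leaf sits at even depth and hence in the \emph{majority} colour $\overline{Z}\cap B$; such vertices generally carry edges of $\Cut{Z}$ and are not of degree~$2$ in $H$. The vertices you should bicontract are those of the \emph{minority} colour $\overline{Z}\cap A$: each lies at odd depth with exactly one tree-parent and one $M$-child, and---since in a bipartite matching covered graph every edge of a tight cut joins the majority of one shore to the majority of the other---has no edge in $\Cut{Z}$. So each has degree exactly~$2$ in $H$, and bicontracting it merges three vertices of $\overline{Z}$ while leaving $Z$ untouched. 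After $|\overline{Z}\cap A|$ such steps, $\overline{Z}$ collapses to a single vertex with neighbourhood $\Fkt{N}{\overline{Z}}\cap Z$, which is precisely $G_{\overline{Z}}$.

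Second, the Case~1/Case~2 split is superfluous. The $M$-alternating spanning tree of $G[\overline{Z}]$ rooted at $v$ exists whenever $G_Z$ is matching covered---equivalently, whenever $\DirM{G_Z}{\Reduct{M}{G_Z}}$ is strongly connected---and that is always the case for a tight-cut contraction; the absence of internal tight cuts is not what enables the construction. One residual technicality you still need to address is whether the spanning subgraph $H$ you pass to is itself matching covered, since the paper's definition of bicontraction routes through tight-cut contraction and hence presupposes this.
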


\begin{corollary}[\cite{lucchesi2015thin}]\label{cor:lucchesimatminor}
	Every brace of a bipartite matching covered graph $G$ is a matching minor of $G$.
\end{corollary}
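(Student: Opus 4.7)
The plan is to proceed by induction on the number of \nontrivial tight cuts appearing in a tight cut decomposition of $G$. For the base case, assume $G$ has no \nontrivial tight cut. Since $G$ is bipartite and matching covered, $G$ is itself a brace, and it is the only brace produced by its (trivial) tight cut decomposition. Such a $G$ is trivially a matching minor of itself.

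For the inductive step, suppose $G$ is a bipartite matching covered graph containing at least one \nontrivial tight cut $\Cut{Z}$, and let $G_1$ and $G_2$ denote the two $Z$-contractions of $G$. By \cref{lemma:lucchesimatminor}, both $G_1$ and $G_2$ are matching minors of $G$. By \cref{thm:lovasztightcuts}, the list of braces produced by any tight cut decomposition of $G$ equals the union of the lists of braces produced by tight cut decompositions of $G_1$ and $G_2$. Since each $G_i$ admits a tight cut decomposition involving strictly fewer \nontrivial tight cuts than that of $G$, the induction hypothesis applies: every brace of $G_i$ is a matching minor of $G_i$.

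To conclude, I invoke the transitivity of the matching minor relation. A conformal subgraph of a conformal subgraph is conformal (taking the intersection of the conformal vertex sets still leaves an even subgraph with a perfect matching inherited from the enveloping one), and a sequence of bicontractions followed by another sequence of bicontractions is again a sequence of bicontractions. Hence every brace of $G_i$, being a matching minor of $G_i$ and $G_i$ itself being a matching minor of $G$, is a matching minor of $G$, which completes the induction.

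The substantive content sits entirely in \cref{lemma:lucchesimatminor}; the remaining work is the routine bookkeeping above. If any step were to be delicate, it would be spelling out the transitivity argument carefully, in particular checking that bicontractions inside $G_i$ can be lifted to bicontractions (or tight cut contractions along the same cut) inside $G$ without violating conformality of the selected subgraph — but this is immediate from the definitions.
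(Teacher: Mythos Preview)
The paper does not prove this corollary at all: it is quoted verbatim from \cite{lucchesi2015thin}, with \cref{lemma:lucchesimatminor} (also quoted from that source) as the preceding lemma from which it is meant to follow. So there is no in-paper argument to compare against; your induction on the length of a tight cut decomposition, using \cref{lemma:lucchesimatminor} for the single-step reduction and \cref{thm:lovasztightcuts} to identify the brace lists, is exactly the intended deduction.

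One remark on the transitivity paragraph. The two closure properties you state (conformal-of-conformal is conformal; bicontractions compose) are correct but do not by themselves yield transitivity, because the composite operation has the shape ``conformal subgraph, then bicontractions, then conformal subgraph, then bicontractions'', and the middle interchange --- taking a conformal subgraph \emph{after} a bicontraction and lifting it to a conformal subgraph \emph{before} that bicontraction --- is the only nontrivial step. You flag this in your final sentence, and it is indeed routine (uncontract the bicontracted path and observe the result is still conformal in the larger graph), but if you want the argument to be self-contained that is the sentence to expand, not the two easier closures.
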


\section{Perfect matching width, tight cuts and matching minors}
\label{sec:basicproperties}

The main objective of this section is to prove \cref{thm:pmwandmatminors,thm:pmwandbricksandbraces}.
To this end we start by considering the relation between the parity of vertex sets and the decomposition tree in a perfect matching decomposition.
We then consider tight cut contractions and establish that the maximum perfect matching width among the bricks and braces of a matching covered graph yields an upper bound on the perfect matching width of the graph itself.
This establishes one of the inequalities in \cref{thm:pmwandbricksandbraces} as it proves that conformal subgraphs cannot have larger perfect matching width than the graph itself.
For the remaining part, namely the contractions, we make use of $M$-decompositions, which cause the factor 2 in \cref{thm:pmwandmatminors,thm:pmwandbricksandbraces}.

	\subsection{Cubic trees and perfect matching decompositions of even width}
	\label{sec:trees}
	
	The trees of perfect matching decompositions, as for many branch decompositions (see \cite{vatshelle2012new} for the general definition), are cubic, or at least subcubic.
	Just considering the possible structures of the trees themselves can be a useful tool when dealing with this kind of decompositions.
	We take a closer look at the cubic trees that appear as the trees of perfect matching decompositions.
	
	For a cubic tree $T$ we define the \emph{spine} of $T$ by $\Spine{T}\coloneqq T-\Leaves{T}$.
	The edges in $\Fkt{E}{T}\setminus\Fkt{E}{\Spine{T}}$ are called \emph{trivial}.
	We say that an edge $e\in\Fkt{E}{\Spine{T}}$ is \emph{even}, if the two components of $T-e$ contain an even number of leaves of $T$ each and it is \emph{odd} otherwise.
    See \cref{fig:spine_and_odd_edges} for an illustration of these definitions.

 \begin{figure}[!ht]
	\centering
	\begin{tikzpicture}
		\def\dist{0.7}
		\node[vertex] (center) at (0,0) {};
		\node[vertex] (l) at ($(center)+(-\dist,0)$) {};
		\node[vertex] (r) at ($(center)+(\dist,0)$) {};
		\node[vertex] (s) at ($(l)+(270:\dist)$) {};
		\node[vertex] (x) at ($(l)+(180:\dist)$) {};
		\node[vertex] (g) at ($(x)+(135:\dist)$) {};
		\node[vertex] (h) at ($(x)+(225:\dist)$) {};
		
		\node[vertex] (z) at ($(r)+(270:\dist)$) {};
		\node[vertex] (y) at ($(r)+(360:\dist)$) {};
		\node[vertex] (d) at ($(y)+(45:\dist)$) {};
		\node[vertex] (e) at ($(y)+(315:\dist)$) {};
		
		\node[vertex] (o) at ($(center)+(90:\dist)$) {};
		\node[vertex] (b) at ($(o)+(135:\dist)$) {};
		\node[vertex] (c) at ($(o)+(45:\dist)$) {};
		
		\draw (y) edge[markedOnly={myLightBlue}] (x);
		\draw (center) edge[edge,myOrange] (l);
		\draw (r) edge[edge,myOrange] (center);
		\draw (o) edge[marked={myLightBlue},shorten >= 2pt] (center);
		\draw (o) edge[edge] (center);
		\draw (b) edge[edge] (o);
		\draw (c) edge[edge] (o);
		\draw (z) edge[edge] (r);
		\draw (y) edge[edge] (r);
		\draw (d) edge[edge] (y);
		\draw (e) edge[edge] (y);
		\draw (g) edge[edge] (x);
		\draw (h) edge[edge] (x);
		\draw (x) edge[edge] (l);
		\draw (l) edge[edge] (s);
	\end{tikzpicture}
	\caption{An example for a cubic tree $T$ with its \protect\hyperlink{spine}{\textcolor{myLightBlue!80!black}{spine}} and its \textcolor{myOrange}{odd edges}.}
	\label{fig:spine_and_odd_edges}
\end{figure}

	Note that, if $T$ is the cubic tree underlying a perfect matching decomposition of a graph $G$, then $T$ has an even number of leaves as $G$ has an even number of vertices.
	This implies that in $T$ a \nontrivial edge $e$ is odd if and only if the two trees of $T-e$ contain an odd number of leaves of $T$ each.
	
	We make the following useful observations on cubic trees with an even number of leaves.
	
	\begin{observation}
    \label{lemma:cubictrees}
		Let $T$ be a cubic tree with $\Abs{\Leaves{T}}=\ell$ even.
		Then the following statements are true.
		\begin{enumerate}
			
			\item $\Abs{\Fkt{V}{T}}=2\ell-2$,
			
			\item $\Spine{T}$ has an even number of vertices,
			
			\item $\Spine{T}$ has an even number of vertices of degree $2$, and
			
			\item $e\in\Fkt{E}{\Spine{T}}$ is an odd edge of $T$ if and only if the two trees of $\Spine{T}-e$ contain an even number of vertices each.
			
		\end{enumerate}
	\end{observation}

	It is easy to see that $\Spine{T}$ is a subcubic tree.
	There is a close correspondence between the occurrence of odd edges in $T$ and vertices of degree $2$ in $\Spine{T}$.
	
	\begin{lemma}\label{lemma:oddspinevertices}
		Let $T$ be a cubic tree with an even number of leaves.
		\begin{enumerate}
		
			\item If $\DegG{v}{\operatorname{spine}\Brace{T}}=1$, then $v$ is not incident with an odd edge of $T$.
			
			\item If $\DegG{v}{\operatorname{spine}\Brace{T}}=2$, then $v$ is incident with exactly $1$ odd edge of $T$.
			
			\item If $\DegG{v}{\operatorname{spine}\Brace{T}}=3$, then $v$ is either incident with exactly $2$ odd edges of $T$ or with none.
		
		\end{enumerate}
	\end{lemma}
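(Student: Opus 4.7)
The plan is to argue this by a direct parity count of the leaves of $T$ that end up on each side of the edges incident to $v$. Fix a vertex $v$ of $\Spine{T}$ with $d \coloneqq \DegG{v}{\Spine{T}} \in \Set{1,2,3}$. Since $T$ is cubic and $v$ is not a leaf, $\DegG{v}{T}=3$, so $v$ is incident to exactly $d$ spine edges $e_1,\dots,e_d$ and to exactly $3-d$ trivial edges, each of which connects $v$ to a leaf of $T$.

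Next I would remove $v$ from $T$ and, for $i \in \Set{1,\dots,d}$, let $T_i$ be the component of $T-v$ containing the other endpoint of $e_i$. Write $\ell_i \coloneqq \Abs{\Leaves{T} \cap \V{T_i}}$. Since the $3-d$ trivial edges at $v$ contribute $3-d$ further leaves and every leaf of $T$ lies in exactly one component of $T-v$, we get
\begin{equation*}
\sum_{i=1}^{d} \ell_i \;=\; \Abs{\Leaves{T}} - (3-d).
\end{equation*}
By hypothesis $\Abs{\Leaves{T}}$ is even, so the right-hand side has the same parity as $d-3$, i.e.\ the same parity as $d+1$. Also, $e_i$ is odd precisely when $\ell_i$ is odd: one side of $T-e_i$ contains exactly $\ell_i$ leaves, and since the total is even the two sides have the same parity.

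From here the three cases follow immediately. For $d=1$ the sum $\ell_1$ is even, so $e_1$ is not odd. For $d=2$ the sum $\ell_1+\ell_2$ is odd, so exactly one of $\ell_1,\ell_2$ is odd, giving exactly one odd edge at $v$. For $d=3$ the sum $\ell_1+\ell_2+\ell_3$ is even, which forces the number of odd $\ell_i$'s to be even, hence $0$ or $2$. There is no real obstacle here; the only thing to be careful about is the bookkeeping of the $3-d$ leaves that hang directly off $v$ via trivial edges, since forgetting them shifts the parity of the sum and would give the wrong conclusion.
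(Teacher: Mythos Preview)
Your argument is correct and is essentially the same parity count as in the paper: you look at the number of leaves on the far side of each spine edge incident to $v$, observe that these numbers sum to $\Abs{\Leaves{T}}$ minus the number of leaves hanging directly off $v$, and read off the parity. The only difference is cosmetic---you package all three cases into the single identity $\sum_{i=1}^{d}\ell_i = \Abs{\Leaves{T}} - (3-d)$, whereas the paper treats $d=1,2,3$ separately---but the underlying idea and the computations are identical.
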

	
	\begin{proof}
	If $v$ is of degree $1$ in the spine of $T$, it is adjacent with exactly two leaves of $T$ and thus, by definition, the unique edge incident with $v$ in $\Spine{T}$ cannot be odd.

	Let $v$ be a vertex of degree $2$ in $\Spine{T}$ and $e_1,e_2$ the two edges incident with $v$ in the spine.
	In $T$ itself $v$ is incident with a third edge $e_3$ whose other endpoint is a leaf of $T$.
	Let $k_i$ be the number of leaves of $T$ contained in the component of $T-e_i$ that does not contain $v$.
	Then $\Abs{\Leaves{T}}=k_1+k_2+k_3$ and $k_3 = 1$.
	Since the total number of leaves is even and $k_3$ is odd, exactly one of $k_1$ and $k_2$ is odd as well.
	Thus, exactly one of the two edges $e_1$ and $e_2$ is an odd edge of $T$.
	
	At last we consider a degree $3$ vertex $v$ in $\Spine{T}$.
	Let $e_1,e_2,e_3$ be the three edges of the spine incident with $v$ and let $k_i$ be the number of leaves of $T$ contained in the component of $T-e_i$.
	In this case $\Abs{\Leaves{T}}=k_1+k_2+k_3$ and thus neither all three, nor just one of them can be odd.
	\end{proof}

	In particular, no vertex in $T$ can be incident with more than two odd edges and every degree $2$ vertex of $\Spine{T}$ is incident with exactly one odd edge of $T$.
	Hence, the following corollary holds.
	
	\begin{corollary}\label{cor:cubicspine}
		Let $T$ be a cubic tree with an even number of leaves.
		Then $\Spine{T}$ is cubic if and only if $T$ has no odd edges.
	\end{corollary}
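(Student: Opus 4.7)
The plan is to dispose of the two implications separately, both by direct application of the preceding \cref{lemma:oddspinevertices} and the observation that, by definition, odd edges of $T$ necessarily lie in $\Spine{T}$.

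For the direction ``$T$ has no odd edges $\Rightarrow$ $\Spine{T}$ is cubic'' I would argue by contraposition. Suppose $\Spine{T}$ is not cubic, so it contains a vertex $v$ of degree $2$. Then \cref{lemma:oddspinevertices}(ii) guarantees that $v$ is incident to exactly one odd edge of $T$, contradicting the absence of odd edges. This direction is essentially a one-liner.

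For the direction ``$\Spine{T}$ is cubic $\Rightarrow$ $T$ has no odd edges'' I would use a simple leaf-counting argument. Under the assumption, every vertex $v\in\V{\Spine{T}}$ has spine-degree either $1$ or $3$. Since $T$ itself is cubic, the number of leaves of $T$ adjacent to $v$ equals $3-\Deg_{\Spine{T}}(v)$, so it is either $2$ or $0$. Now fix any spine edge $e$ and write $T-e=T_1\cup T_2$. Each $T_i$ consists of a subtree $S_i$ of $\Spine{T}$ together with all leaves of $T$ hanging off the vertices of $S_i$. Because every vertex of $S_i$ contributes an even number ($0$ or $2$) of leaves of $T$, the total number of leaves of $T$ contained in $T_i$ is even. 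Hence $e$ is not an odd edge of $T$. Since edges outside $\Spine{T}$ are by definition trivial and thus cannot be odd, $T$ has no odd edges at all.

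No step is genuinely difficult here; the only point needing care is to remember that odd edges are only defined for spine edges and to track correctly how the spine-degree of a vertex determines how many $T$-leaves hang off it. Given those observations, the statement is immediate.
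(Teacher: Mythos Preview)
Your proof is correct and follows essentially the same approach as the paper, which derives the corollary directly from \cref{lemma:oddspinevertices}. The only minor variation is in the forward direction: the paper's implicit argument uses cases (i) and (iii) of the lemma (so the odd edges would form an even subgraph of a tree, hence be empty), whereas you bypass the lemma there with an equivalent direct leaf-count; both are immediate.
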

	
	Moreover, the odd edges of $T$ induce a subforest of $\Spine{T}$ such that the leaves of this forest are exactly the degree $2$ vertices of $\Spine{T}$.
	Also, no vertex of $\Spine{T}$ can be incident with more than two odd edges of $T$ and thus this subforest is actually a collection of paths.

	\begin{corollary}\label{cor:oddspine}
	Let $T$ be a cubic tree with an even number of leaves and $E_O\subset\Fkt{E}{T}$ the set of odd edges of $T$.
	Then $\InducedSubgraph{T}{E_O}$ is a collection of pairwise disjoint paths.
	Moreover, the set of endpoints of these paths is exactly the set of degree $2$ vertices in $\Spine{T}$.
	\end{corollary}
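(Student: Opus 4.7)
The plan is to read off the degree of each vertex of $T$ within the edge-induced subgraph $\InducedSubgraph{T}{E_O}$ directly from \cref{lemma:oddspinevertices}. Since odd edges are, by definition, contained in $\Spine{T}$, only vertices of $\Spine{T}$ can be incident with odd edges, so we only need to examine spine-vertices of degree $1$, $2$, and $3$. By the three cases of \cref{lemma:oddspinevertices} the number of odd edges incident with such a vertex is $0$, exactly $1$, or $0$ respectively $2$. Hence every vertex has degree at most $2$ in $\InducedSubgraph{T}{E_O}$.

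A graph in which every vertex has degree at most $2$ is a disjoint union of paths and cycles. Since $\InducedSubgraph{T}{E_O}$ is a subgraph of the tree $T$, it cannot contain any cycle, so it must be a disjoint union of paths. This establishes the first part of the statement.

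For the second part, the endpoints of the paths forming $\InducedSubgraph{T}{E_O}$ are precisely those vertices of the subgraph with degree exactly $1$, i.e.\ those vertices of $T$ incident with exactly one odd edge. By \cref{lemma:oddspinevertices} this condition holds if and only if the vertex has degree $2$ in $\Spine{T}$: degree-$1$ spine vertices contribute no odd edge at all, while degree-$3$ spine vertices either contribute $0$ or $2$ odd edges, which correspond to either not appearing in $\InducedSubgraph{T}{E_O}$ or being an interior vertex of one of the paths. Thus the endpoints of the paths are exactly the degree-$2$ vertices of $\Spine{T}$, completing the proof. The only mildly delicate point is to remember that $\InducedSubgraph{T}{E_O}$ is edge-induced, so isolated spine-vertices (with no incident odd edge) simply do not appear and cause no ambiguity in identifying path endpoints.
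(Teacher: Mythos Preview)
Your proof is correct and follows essentially the same route as the paper: the paper's justification (given in the paragraph immediately preceding the corollary) also reads off from \cref{lemma:oddspinevertices} that no spine vertex is incident with more than two odd edges, concludes that the odd edges induce a subforest of maximum degree two (hence a collection of paths), and identifies the endpoints with the degree-$2$ spine vertices. Your version spells out the details a bit more explicitly, but the argument is the same.
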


	Next, we want to answer the question:
	How do odd edges interact with the perfect matching width?
	First, we investigate the influence of the existence of odd edges in the cubic tree of a perfect matching decomposition $\Brace{T,\DecompBijection{}}$ on the parity of the width of $\Brace{T,\DecompBijection{}}$.
	
	\begin{observation}
	    \label{lemma:parityporosity}
		Let $G$ be a graph with a perfect matching and $X \subseteq \Fkt{V}{G}$.
		Then $\MatPor{\Cut{X}}$ is odd if and only if $\Abs{X}$ is odd.
	\end{observation}

As an immediate consequence of \cref{lemma:parityporosity}, the tree of every perfect matching decomposition $\Brace{T,\DecompBijection{}}$ of odd width contains an odd edge.
Parity plays a huge role in the study of perfect matchings and it can be very useful to control the occurrence of odd edges in a perfect matching decomposition.

\subsection{Upper bound by bricks and braces}
\label{sec:tightCutsBricksBraces}

In order to establish a basic toolkit, in this subsection we present how the perfect matching width of $G$ relates to the perfect matching width of its matching minors.
We establish that the bricks and braces of $G$ yield an upper bound on the perfect matching width of $G$ itself.
This connection reduces the problem of finding a (close to optimal) perfect matching decomposition of $G$ to finding appropriate decompositions for its bricks and braces.

	\begin{proposition}\label{prop:upperboundbricksandbraces}
		Let $G$ be a matching covered graph.
		Then
		\begin{equation*}
			\pmw{G}\leq \max_{\substack{H~\text{brick or} \\ \text{brace of }G}} \pmw{H}.
		\end{equation*}
	\end{proposition}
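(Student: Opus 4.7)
The plan is to induct on $|\V{G}|$. If $G$ has no \nontrivial tight cut then $G$ is itself a brick or brace and there is nothing to show, so assume a \nontrivial tight cut $\Cut{Z}$ exists. Set $Z' \coloneqq \V{G}\setminus Z$ and let $G_Z$ and $G_{Z'}$ denote the two tight cut contractions of $G$, with respective contraction vertices $v_Z$ and $v_{Z'}$. Both are matching covered and strictly smaller than $G$, so by induction each has perfect matching width at most $W \coloneqq \max_{H}\pmw{H}$, where $H$ ranges over the bricks and braces of $G$. Here I use \cref{thm:lovasztightcuts}: the tight cut decompositions of $G_Z$ and $G_{Z'}$ together yield precisely the unique list of bricks and braces of $G$, so the bound passes to both contractions.

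Next I glue optimal \PM-decompositions $\Brace{T_1,\delta_1}$ of $G_Z$ and $\Brace{T_2,\delta_2}$ of $G_{Z'}$ along the tight cut. For $i\in\Set{1,2}$ let $\ell_i$ be the leaf of $T_i$ mapped under $\delta_i$ to the contraction vertex, and let $p_i$ be its unique neighbour. Form $T$ from $T_1\cup T_2$ by deleting $\ell_1,\ell_2$ together with their incident edges and adding a new edge $p_1p_2$; this restores degree three at $p_1$ and $p_2$, so $T$ is cubic. Define $\delta$ on $\Leaves{T}$ by inheriting $\delta_1$ on $\Leaves{T_1}\setminus\Set{\ell_1}$ and $\delta_2$ on $\Leaves{T_2}\setminus\Set{\ell_2}$. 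Since these two leaf sets map bijectively onto $Z'$ and $Z$ respectively, $\delta$ is a bijection onto $\V{G}$.

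To bound $\Width{T,\delta}$ I split edges of $T$ into three cases. The new edge $p_1p_2$ partitions $\Leaves{T}$ into the pair $\Brace{Z,Z'}$, so its induced cut in $G$ is the tight cut $\Cut{Z}$ itself, of matching porosity exactly $1 \leq W$. An edge $e$ originating in $T_1$ induces a partition $\Brace{A_1,A_2}$ of $\V{G_Z}$, which we may take with $v_Z\in A_1$; the corresponding partition of $\V{G}$ is $\Brace{\Brace{A_1\setminus\Set{v_Z}}\cup Z,\,A_2}$. For every $M\in\Perf{G}$, the reduction $\Reduct{M}{G_Z}$ is a perfect matching of $G_Z$, and by construction an edge of $M$ lies in the cut in $G$ iff its reduction lies in $\Cut{A_1}$ in $G_Z$, using tightness of $\Cut{Z}$ to identify the unique $M$-edge of $\Cut{Z}$ (when the cut indeed crosses $\Cut{Z}$) with the edge of $\Reduct{M}{G_Z}$ incident to $v_Z$. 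Hence the matching porosity of this cut in $G$ is at most $\matPor{\Cut{A_1}}\leq W$. Edges descended from $T_2$ are handled symmetrically, yielding $\pmw{G}\leq\Width{T,\delta}\leq W$.

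The main obstacle is the bookkeeping in the third step: confirming that reducing a perfect matching of $G$ along $\Cut{Z}$ preserves the number of edges crossing each of the inherited cuts. Once one observes that an edge of $M$ contributing to an inherited cut either already lies in $G_Z$ or is the unique $M$-edge of $\Cut{Z}$ (whose reduction is the edge of $\Reduct{M}{G_Z}$ at $v_Z$), the correspondence of porosities is routine and the remaining verifications (that $T$ is cubic, that $\delta$ is a bijection) are immediate from the construction.
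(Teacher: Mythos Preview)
Your proof is correct and follows essentially the same approach as the paper: both arguments glue optimal \PM-decompositions of the two tight cut contractions along the leaves mapped to the contraction vertices, verifying that the new edge induces the tight cut (porosity $1$) and that every inherited edge has unchanged porosity via the reduction $\Reduct{M}{G_Z}$. The only cosmetic difference is organisational---you induct top-down on $\Abs{\V{G}}$ using a single tight cut, whereas the paper merges bottom-up along the tree of bricks and braces determined by a maximal laminar family---but the core gluing step and porosity bookkeeping are identical.
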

    \begin{proof}
        Due to \cref{thm:lovasztightcuts} it suffices to show that for every matching covered graph $G$ and every tight cut $\Cut{Z}$ holds $\pmw{G} \leq \max\Set{\pmw{G_Z}, \pmw{G_{\Compl{Z}}}}$.
        
        To this end let $\Brace{T_i , \DecompBijection{}_i }$ be an optimal perfect matching decomposition of $G_i$, for $i \in\Set{Z,\Compl{Z}}$ and let $\ell_i$ be the leaf of $T_i$ such that $\Fkt{\DecompBijection{}_i}{\ell_i} = v_{j}$, where $j \in\Set{Z,\Compl{Z}}\setminus\Set{i}$, that is, $v_{j}$ is the contraction vertex in $G_i$.
        
        We construct a perfect matching decomposition $\Brace{T, \DecompBijection{}}$ of $G$ as follows.
        Let $e_i$ be the edge of $T_i$ that is incident to $\ell_i.$
        First we obtain $T$ from $T_Z$ and $T_{\Compl{Z}}$ by identifying the edges $e_Z$ and $e_{\Compl{Z}}$ as a new edge $e$.
        \begin{align*}
		\DecompBijection{} &{}: \Leaves{T} \to \Fkt{V}{G},\\
        \DecompBijection{t} &{}\coloneqq 
                \begin{cases}
                    \Fkt{\DecompBijection{}_{Z}}{t}, &\text{ if } t\in\Fkt{V}{T_Z}\setminus\Set{\ell_{Z}}\\
                    \Fkt{\DecompBijection{}_{\Compl{Z}}}{t}, &\text{ if } t\in\Fkt{V}{T_{\Compl{Z}}}\setminus\Set{\ell_{\Compl{Z}}}
                \end{cases}
		\end{align*}
    
    We claim that the width of $\Brace{T, \delta}$ is equal to the maximum of the widths of $\Brace{T_Z , \DecompBijection{}_Z }$ and $\Brace{T_{\Compl{Z}} , \DecompBijection{}_{\Compl{Z}} }$.
    As $\Cut{Z}$ is a tight cut, $\Cut{e}$ has matching porosity $1$.
	Now consider an edge $e' \in \Fkt{E}{T} \setminus \Set{e}$.
	Let $i \in \Set{Z,\Compl{Z}}$ such that $e' \in \Fkt{E}{T_i}$ and let $j \in\Set{Z,\Compl{Z}}\setminus\Set{i}$.
    For every perfect matching $M \in \Perf{G}$
    the $\Remainder{M}{G_i}$ is a perfect matching of $G_i$ with $\Abs{\CutG{G_i}{e'}\cap M'} = \Abs{\CutG{G}{e'}\cap M}$.
    Thus $\MatPor{\CutG{G}{e'}} = \MatPor{\CutG{G_i}{e'}}$.
    
    Therefore, $\pmw{G}$ is at most $\max\Set{\pmw{G_X},\pmw{G_{\Compl{X}}}}$
    \end{proof}

For the study of matching covered graphs of specific perfect matching width it would be helpful to have a notion of obstructions, or at least sources for lower bounds, on the width.
Before we continue towards the main result of this section concerning matching minors, we have to discuss conformal subgraphs.
These provide a lower bound on the perfect matching width of a graph and therefore are a first step in that direction.

\begin{lemma}\label{lemma:subgraphwidth}
Let $G$ be a graph with a perfect matching and $H\subseteq G$ a conformal subgraph of $G$.
Then $\pmw{H}\leq\pmw{G}$.
\end{lemma}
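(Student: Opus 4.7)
The plan is to construct, from an optimal \PM-decomposition $\Brace{T,\delta}$ of $G$, a \PM-decomposition $\Brace{T_H,\delta_H}$ of $H$ of width at most $\Width{T,\delta}=\pmw{G}$.

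First, I would let $T'$ be the minimal subtree of $T$ containing every leaf $t\in\Leaves{T}$ with $\Fkt{\delta}{t}\in\V{H}$. Minimality guarantees that every leaf of $T'$ is mapped into $\V{H}$, and that every vertex of $T'$ of degree $2$ in $T'$ has its third incident $T$-edge leading into a subtree of $T$ whose leaves are all mapped into $\V{G}\setminus\V{H}$. The subtree $T'$ is subcubic, so trimming (as introduced immediately before the statement) produces a cubic tree $T_H$ with $\Leaves{T_H}=\Leaves{T'}$. The restriction of $\delta$ to $\Leaves{T_H}$ is a bijection onto $\V{H}$, so $\Brace{T_H,\delta_H}$ is a \PM-decomposition of $H$.

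Next, I would match the cuts of $T_H$ to cuts of $T$. Fix an edge $f\in\E{T_H}$ and write $\Brace{T_{H,1},T_{H,2}}\coloneqq\splitTree{T_H}{f}$ with leaf sets $L_1,L_2\subseteq\V{H}$. Trimming identifies $f$ with a (possibly length-one) path $P\subseteq T'\subseteq T$, and by the observation above, for every edge $e\in\E{P}$ with $\Brace{T_1,T_2}\coloneqq\splitTree{T}{e}$ one has $\Fkt{\delta}{T_i}\cap\V{H}=L_i$ for $i\in\Set{1,2}$. Consequently, every edge of $H$ joining a vertex of $L_1$ to a vertex of $L_2$ lies in $\CutG{e}{G}$.

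Finally, I would compare matching porosities. Since $H$ is \conformal, $G-\V{H}$ has a perfect matching $N$. For any $M_H\in\Perf{H}$ maximising $\CutG{f}{H}$, the union $M_G\coloneqq M_H\cup N$ is a perfect matching of $G$, and every edge of $M_H\cap\CutG{f}{H}$ belongs to $M_G\cap\CutG{e}{G}$. Therefore
\[
\matPor{\CutG{f}{H}}=\Abs{M_H\cap\CutG{f}{H}}\leq\Abs{M_G\cap\CutG{e}{G}}\leq\matPor{\CutG{e}{G}}\leq\Width{T,\delta}.
\]
Taking the maximum over $f\in\E{T_H}$ gives $\pmw{H}\leq\Width{T_H,\delta_H}\leq\pmw{G}$. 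The main obstacle is the bookkeeping in the middle step: one must verify that after trimming, each edge $f\in\E{T_H}$ and a corresponding edge $e\in\E{P}$ genuinely induce the same partition of $\V{H}$, which is forced precisely by the minimality of $T'$ (every collapsed degree-$2$ vertex hangs only off leaves outside $\V{H}$).
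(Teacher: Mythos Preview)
Your proposal is correct and follows essentially the same approach as the paper: both take an optimal \PM-decomposition of $G$, pass to the minimal subtree spanned by the leaves mapped into $\V{H}$, trim to a cubic tree, and then bound the porosity of each induced cut in $H$ by extending a perfect matching of $H$ to one of $G$ via conformality. The only cosmetic differences are that the paper phrases the final step as a contradiction rather than a direct inequality, and that you are slightly more explicit about an edge of the trimmed tree corresponding to a path in $T$ (the paper simply identifies such an edge with one edge of $T$).
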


\begin{proof}
Let $\Brace{T,\DecompBijection{}}$ be an optimal perfect matching decomposition of $G$ and \begin{equation*}
    L_{\Compl{H}} \coloneqq \CondSet{\ell\in\Leaves{T}}{\DecompBijection{\ell}\in\Fkt{V}{G}\setminus\Fkt{V}{H}}.
\end{equation*}
Then, $T-L_{\Compl{H}}$ is a subcubic tree.
Now, remove from $T-L_{\Compl{H}}$ iteratively all vertices that became leaves and thus are not mapped to any vertex by $\DecompBijection{}$ obtaining $T''$.
Let $T'$ be the tree obtained from $T''$ by trimming.
We define $\DecompBijection{}'\colon\Leaves{T'}\to\Fkt{V}{H}$ by restricting $\DecompBijection{}$ to $\Leaves{T'}$ and claim that $\Brace{T',\DecompBijection{}'}$ is a perfect matching decomposition of $H$ of width at most $\pmw{G}$.

Now consider an edge $e\in\Fkt{E}{T'}$ and its corresponding cut $\Cut{X'}$ in $H.$
Let $M'\in\Perf{H}$ be a perfect matching in $H.$
By construction, $e\in\Fkt{E}{T}$ and thus $e$ corresponds to a cut $\Cut{X}$ in $G$ as well.
Moreover, $X'\subseteq X$ and $\Fkt{V}{H}\setminus X'\subseteq\Fkt{V}{G}\setminus X$.
Since $H$ is a conformal subgraph of $G$, there is a perfect matching $M\in\Perf{G}$ with $M'\subseteq M$ and we obtain $\Abs{\Cut{X'}\cap M'} \leq \Abs{\Cut{X}\cap M} \leq \pmw{G}$.
Hence, $\Width{T',\DecompBijection{}'}\geq\pmw{G}$.
\end{proof}

We want to reduce the problem of determining the perfect matching width of a matching covered graph to working out the width of its bricks and braces.
For now we established that the width of the bricks and braces yields an upper bound and conformal subgraphs provide a lower bound.
In order to obtain a lower bound in terms of the bricks and braces, we need to know how tight cuts interact with the perfect matching width.

\subsection{\texorpdfstring{$M$}{M}-perfect matching width and matching minors}
\label{sec:M-PMW}

For bipartite matching covered graphs Rabinovich and two of the authors \cite{hatzel2019cyclewidth} provide a qualitative bound for the perfect matching width of matching minors.
In this section we strengthen this to general graphs with perfect matchings.
We use the notion of $M$-perfect matching width ($M$-pmw), which allows us to restrict ourselves to a specific kind of perfect matching decompositions.
{\renewcommand\footnote[2]{}\mpmwandpmw*}

We start with tight cut contractions.
Given an $M$-decomposition for $G$ of width $k$, we want to construct $M$-decompositions of width at most $k$ for both tight cut contractions of a single tight cut in $G$.
Handling a single tight cut contraction suffices since the $M$-decompositions we obtain for the two tight cut contractions are $M'$-perfect matching decompositions again where $M'$ is the restriction of $M$ to the two contractions.
This allows us to apply induction and reduce the initial matching covered graph $G$ all the way down to its bricks and braces.

\paragraph{Positioning the contraction vertex} Key to obtaining an $M$-decomposition for a tight cut contraction of $G$ from an $M$-decomposition $\Brace{T,\DecompBijection{}}$ of $G$ is the decision where in the trimmed version of the decomposition tree to attach a new leaf for the contraction vertex.
If there is an edge in $T$ that separates the vast majority of the vertices of one of the tight cut shores from the vertices of the other, this decision is not too complicated to make.
But if such an edge does not exists, or in other words $\Brace{T,\DecompBijection{}}$ does not distinguish between the two shores of our tight cut, it is way harder to decide.
While \cref{prop:upperboundbricksandbraces} shows that there always exist perfect matching decompositions with edges reflecting the tight cuts, these decompositions are not necessarily optimal and at this point we are not able to provide a bound on the approximation they provide.

Our decision where to position the contraction vertex is based on some implications of \cref{lemma:parityporosity}.
If $\Cut{Z}$ is a \nontrivial tight cut of $G$, then $\Abs{Z}$ is odd and thus for all $X \subseteq \V{G}$ the cut $\Cut{X}$ of $G$ has exactly one shore that contains an odd number of vertices of $Z$.
If $\Abs{X}$ is even, this shore also contains an odd number of vertices of $\Compl{Z}$.
This observation leads us to the following lemma.

Note that any cut induced by an inner edge of an $M$-decomposition is even since both shores are $M$-conformal.

\begin{lemma}\label{lemma:placecontractionvertex}
    Let $G$ be a matching covered graph, $X$ be an even cut, and $\Cut{Z}$ a \nontrivial tight cut of $G$ as well as $v_Z$ the contraction vertex obtained by the tight cut contraction of $Z$ into the graph $G_Z$.
    
    If $\Abs{X \cap Z}$ is odd, then $\MatPor{\CutG{G_Z}{\Brace{X\setminus Z} \cup \Set{v_Z}}} \leq \MatPor{\CutG{G}{X}}$.
\end{lemma}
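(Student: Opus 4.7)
The plan is to take a perfect matching $M'$ of $G_Z$ that maximises the cut $\CutG{Y}{G_Z}$, where $Y \coloneqq \Brace{X \setminus Z} \cup \Set{v_Z}$, lift it to a suitably chosen perfect matching $M$ of $G$ via the tight cut structure, and establish
\begin{equation*}
\Abs{M' \cap \CutG{Y}{G_Z}} \leq \Abs{M \cap \CutG{X}{G}} \leq \matPor{\CutG{X}{G}},
\end{equation*}
where the right-hand inequality holds by definition.

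For the lift, I would use that $\Cut{Z}$ tight and $G$ matching covered imply that both $G_Z$ and $G_{\Compl{Z}}$ are matching covered. Let $v_Z w$ be the unique edge of $M'$ incident with $v_Z$, and let $uw \in \E{G}$ be a preimage of it with $u \in Z$. Since $G_{\Compl{Z}}$ is matching covered, there is a perfect matching of $G_{\Compl{Z}}$ containing the edge $v_{\Compl{Z}}u$; removing that edge gives a perfect matching $M_Z$ of $\InducedSubgraph{G}{Z \setminus \Set{u}}$, and hence $M \coloneqq \Brace{M' \setminus \Set{v_Zw}} \cup \Set{uw} \cup M_Z$ is a perfect matching of $G$.

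Next I would decompose both cut intersections by the location of edges. An edge of $M$ with both endpoints in $\V{G} \setminus Z$ crosses $\Cut{X}$ in $G$ exactly when the same edge (viewed in $G_Z$) crosses $\CutG{Y}{G_Z}$, so these contributions cancel. Of the remaining edges, $uw$ contributes to $\CutG{X}{G}$ precisely when $u$ and $w$ lie on opposite sides of $X$; $v_Zw$ contributes to $\CutG{Y}{G_Z}$ precisely when $w \notin X$; and the edges of $M_Z$ contribute $\Abs{M_Z \cap \CutG{X \cap Z}{\InducedSubgraph{G}{Z}}}$ to $\CutG{X}{G}$ and nothing to $\CutG{Y}{G_Z}$. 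A short case distinction on whether $u \in X$ and whether $w \in X$ shows that the difference of the first two contributions is nonnegative except in the single case $u \notin X$ and $w \notin X$, where it equals $-1$.

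To handle exactly this case I would appeal to \cref{lemma:parityporosity}, whose proof in fact shows that for \emph{any} perfect matching $N$ of a graph $H$ and any $S \subseteq \V{H}$, $\Abs{N \cap \CutG{S}{H}}$ and $\Abs{S}$ have the same parity. Applying this to $M_Z$ in $\InducedSubgraph{G}{Z \setminus \Set{u}}$ with $S \coloneqq X \cap \Brace{Z \setminus \Set{u}}$: since $u \notin X$ we have $S = X \cap Z$, which is odd by hypothesis, so $\Abs{M_Z \cap \CutG{X \cap Z}{\InducedSubgraph{G}{Z}}}$ is odd and hence at least $1$, compensating the deficit. Combining the four cases yields the claimed inequality. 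The main obstacle is precisely this parity case: the other three are routine bookkeeping, but $u,w \notin X$ loses one in the naive comparison, and recovering it is exactly what the hypothesis $\Abs{X \cap Z}$ odd (and the fact that the parity statement of \cref{lemma:parityporosity} holds for every perfect matching, not only those maximising the cut) is tailored to achieve.
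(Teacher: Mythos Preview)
Your proof is correct, but the route differs from the paper's. The paper argues by contradiction and uses a \emph{global} parity argument: assuming $\matPor{\CutG{Y}{G_Z}}>\matPor{\CutG{X}{G}}$, the lift (implicitly invoked via ``$M'$ contains at most one edge incident with $v_Z$'') shows the gap is at most $1$, hence exactly $1$; but since $\Abs{X}$ is even (by \cref{lemma:parityporosity}, as $\CutG{X}{G}$ has even porosity) and $\Abs{X\cap Z}$ is odd, the shore $\Brace{X\setminus Z}\cup\Set{v_Z}$ is also even, so by \cref{lemma:parityporosity} applied in $G_Z$ both porosities are even, contradicting a gap of $1$.

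Your approach is instead a \emph{direct} inequality via an explicit lift and case analysis, and your parity argument is \emph{local}: you apply the per-matching parity observation inside $\InducedSubgraph{G}{Z\setminus\Set{u}}$ to force $\Abs{M_Z\cap\CutG{X\cap Z}{\InducedSubgraph{G}{Z}}}\geq 1$ in the single deficient case. This buys you a constructive proof that exhibits the witnessing matching $M$ in $G$, at the cost of the four-case bookkeeping; the paper's version is shorter but leaves the lifting step and the ``$+1$'' bound essentially unargued. Your remark that the parity conclusion of \cref{lemma:parityporosity} holds for every perfect matching (not just a maximiser) is exactly right and is what makes the local argument go through.
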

\begin{proof}
    Define $X_1 \coloneqq X \cap Z$ and $X_2 \coloneqq \Compl{X}\cap \Compl{Z}.$
    Note that the cut $\CutG{G_Z}{\Brace{X\setminus Z} \cup \Set{v_Z}}$ in $G_Z$ corresponds to the cut $\CutG{G}{X_2}$ in $G.$
    Now consider a perfect matching $M$ of $G.$
    As $\Abs{X_1}$ is odd, we obtain
    \begin{align*}
        &1+ \Abs{M \cap \CutG{G}{X_2}} \leq \Abs{M \cap \CutG{G}{X_1}} + \Abs{M \cap \CutG{G}{X_2}} \\
        \leq{} &\Abs{M \cap \CutG{G}{X}} + \Abs{M \cap \CutG{G}{Z}} = \Abs{M \cap \CutG{G}{X}} + 1. \qedhere
    \end{align*}
\end{proof}

If $\Brace{T,\DecompBijection{}}$ is an $M$-decomposition of $G$, then, as we have seen in the proof of \cref{lemma:placecontractionvertex}, the only cuts whose matching porosity can exceed the width of $\Brace{T,\DecompBijection{}}$ by placing the contraction vertex and ``keeping'' the rest of the decomposition as it is are those of matching porosity exactly $\Width{T,\DecompBijection{}}$.

For each of those cuts we need to indicate which of its two shores contains an odd number of vertices of a tight cut shore.
To this end, we define the following orientation of the edges of $T$.
Our definition does not require $\Brace{T,\DecompBijection{}}$ to be an $M$-decomposition, however, in case it is, we are able to make further observations.

\paragraph{Z-orientations} Let $G$ be a matching covered graph, $\Cut{Z}$ a \nontrivial tight cut of $G$ and $\Brace{T,\DecompBijection{}}$ a perfect matching decomposition of $G$.
We define the \emph{$Z$-orientation} $\ZOrientation{T}{Z}$ of $T$ as the orientation of the edges of $T$, such that for every edge $t_1t_2\in\Fkt{E}{\Spine{T}}$, $\Brace{t_1,t_2}\in\Fkt{E}{\ZOrientation{T}{Z}}$ if and only if $\Abs{\DecompBijection{T_{t_2}}\cap Z}$ is odd.
Additionally, every edge $\ell t\in\Fkt{E}{T}$, where $\ell$ is a leaf, is oriented away from $\ell$, that is $\Brace{\ell,t}\in\Fkt{E}{\ZOrientation{T}{Z}}$.
Note that the $Z$-orientation of the edge $t_1t_2$ is well defined since $\Abs{Z}$ is odd (see \cref{fig:zorientation} for an example).
If there is a vertex $t\in\Fkt{V}{\ZOrientation{T}{Z}}$ such that at least two of its incident edges are outgoing edges, we call $t$ an \emph{inconsistency}.

The idea is that $\ZOrientation{T}{Z}$ should tell us where to put the contraction vertex in order to obtain a decomposition of the tight cut contraction of $G$ obtained by contracting $Z$.
However, this only works if $\ZOrientation{T}{Z}$ has no inconsistencies.

In a directed graph a vertex with only incoming edges is called a \emph{sink}.
If a $Z$-orientation does not have any inconsistencies, there exists a unique sink vertex $s$ in $\ZOrientation{T}{Z}$.
We next prove that the $Z$-orientation of an $M$-decomposition does not contain any inconsistencies and additionally, $s$ is adjacent to a leaf $t\in\Fkt{V}{T}$ and $\DecompBijection{t}\in Z$ (see \cref{lemma:uniquesink}).

So, to obtain a perfect matching decomposition of the tight cut contraction obtained from $G$ by contracting $Z$ into a single vertex $v_Z$, we now forget all vertices of $Z$, delete the corresponding leaves from $T$ (except for $t$) and map $t$ to the contraction vertex $v_Z$.
Finally, we trim this new tree.
This not only yields a perfect matching decomposition, the width of this new decomposition is at most the width of the original graph.
If our decomposition was an $M$-decomposition in the first place, we can make even stronger observations (see \cref{fig:zorientation} for an example).

\begin{figure}[!ht]
	\begin{center}
		\input{figures/zorientation}
	\end{center}
	\caption{A graph $G$ with the \nontrivial tight cut $\Cut{Z}$, a perfect matching $M\in\Perf{G}$, and an $M$-decomposition $\Brace{T,\DecompBijection{}}$ of width four.
		The arrows in $T$ are the edges forming $\ZOrientation{T}{Z}$, note that it is free of inconsistencies and has a unique sink $s$.}
	\label{fig:zorientation}
\end{figure}

\begin{lemma}\label{lemma:uniquesink}
Let $G$ be a matching covered graph, $\Cut{Z}$ a \nontrivial tight cut in $G$, $M\in\Perf{G}$, and $\Brace{T,\DecompBijection{}}$ an $M$-decomposition of $G$.
Then, $\ZOrientation{T}{Z}$ is free of inconsistencies and has a unique sink that is adjacent to a leaf.
\end{lemma}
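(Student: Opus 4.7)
The plan rests on one decisive observation about $M$-\PM-decompositions: for every inner edge of $T$ both shores are $M$-\conformal, so the perfect matching $M$ restricts to a perfect matching on each shore and therefore no edge of $M$ crosses any inner edge cut. Because $\Cut{Z}$ is tight, $M$ contains a unique edge $e_M=uv\in\Cut{Z}$; after renaming we may assume $u\in Z$ and $v\in\Compl{Z}$, and we write $\ell_u,\ell_v\in\Leaves{T}$ for the corresponding leaves under $\delta$.

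First I would use this observation to locate $\ell_u$ and $\ell_v$ in $T$: no inner edge of $T$ can separate them, since otherwise $e_M$ would straddle that edge cut, contradicting $M$-\conformality. In a cubic tree this is possible only when the two leaves share a common neighbour $p$; thus $p$ has exactly two leaf neighbours $\ell_u,\ell_v$ and one spine neighbour.

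Next I would establish a parity lemma linking $\Cut{Z}$ to the orientation rule. For any $M$-\conformal $X\subseteq\V{G}$, the matching $\Reduct{M}{X}$ is perfect on $\InducedSubgraph{G}{X}$ and partitions $X$ into pairs, each contributing to $\Abs{X\cap Z}$ modulo $2$ exactly when it lies in $\Cut{Z}$. Since $M\cap\Cut{Z}=\Set{e_M}$ and $M$-\conformality forbids $e_M$ from straddling $\partial X$, we get that $\Abs{X\cap Z}$ is odd if and only if both endpoints of $e_M$ lie in $X$. Applied to any inner edge $t_1t_2$ with $\Brace{T_1,T_2}=\splitTree{T}{t_1t_2}$, the shore containing $p$ is the one containing both $\ell_u,\ell_v$, hence $e_M$, hence odd $Z$-cardinality; in other words, the $Z$-orientation of every inner edge points towards the subtree containing $p$.

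From here the conclusion is mechanical. Every vertex $t\neq p$ has exactly one outgoing edge, namely the first edge on its unique $t$--$p$ path in $T$, so no vertex has two outgoing edges and by \cref{lemma:trippleinconsistent} there are no inconsistencies. Both leaf edges at $p$ are oriented into $p$ by the leaf convention, and the unique spine edge at $p$ also points into $p$ by the analysis above; hence $p$ is the unique sink and is adjacent to the leaf $\ell_u$ (and to $\ell_v$). The only step requiring genuine care is the very first one: combining $M$-\conformality across all inner edges simultaneously to force the sibling property of $\ell_u$ and $\ell_v$ in the cubic tree $T$. That structural fact is what drives everything else; once it is in hand, the remainder is just parity bookkeeping.
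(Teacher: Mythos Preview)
Your proof is correct and follows essentially the same approach as the paper: identify the unique $M$-edge $uv$ in $\Cut{Z}$, use $M$-\conformality of the shores to show that every inner edge of $T$ is oriented towards the leaf mapped to $u$, and conclude that the common neighbour of $\ell_u$ is the unique sink. In fact you make one point more explicit than the paper does---namely that $\ell_u$ and $\ell_v$ must be siblings because no inner edge can separate endpoints of an $M$-edge---which the paper uses without spelling out and only remarks on after its proof.
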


\begin{proof}
As $Z$ defines a \nontrivial tight cut, there is a unique edge $xy\in M$ with $x\in Z$ and $y\in\Compl{Z}$.
All other vertices of $Z$ are matched within $Z$.
In the $M$-decomposition $\Brace{T,\DecompBijection{}}$ for every $t_1t_2\in\Fkt{E}{T-\Leaves{T}}$ the unique subtree $T_i$, $i \in \Set{t_1,t_2}$, with $\Abs{\DecompBijection{T_i}\cap Z}$ being odd is exactly the one that contains $x$.
Therefore, in $\ZOrientation{T}{Z}$ every inner edge is oriented towards the subtree that contains $x$ and thus there cannot be an inconsistency as $\DecompBijection{}$ is a bijection and the tree containing $x$ is well defined for every inner edge.

Moreover, let $t\in\Fkt{V}{\ZOrientation{T}{Z}}$ such that $t$ is adjacent to two leaves $\ell_1$ and $\ell_2$ where $\DecompBijection{\ell_1}=x$.
Then, for every vertex $t'\in\Fkt{V}{\ZOrientation{T}{Z}}\setminus\Set{t,\ell_1,\ell_2}$ there is a directed path in $\ZOrientation{T}{Z}$ from $t'$ to $t$.
And by the definition of $Z$-orientations, $\Brace{\ell_1,t},\Brace{\ell_2,t}\in\Fkt{E}{\ZOrientation{T}{Z}}$ which implies that $t$ is a sink of $\ZOrientation{T}{Z}$ and no vertex apart from $t$ can be a sink.
\end{proof}

Note that, in the proof above, $\DecompBijection{\ell_2}=y$ and thus, in the decomposition for the tight cut contraction we construct from $\Brace{T,\DecompBijection{}}$, the contraction vertex and $y$ are again siblings.
So, if we start out with an $M$-decomposition of a matching covered graph $G$, then the $Z$-orientations of said decomposition behave exactly as intended.
This allows us to obtain new decompositions for tight cut contractions of at most the same width and thus yields the following result.

\begin{proposition}\label{thm:tightcutctontractionsbound}
Let $G$ be a matching covered graph, $\Cut{Z}$ a \nontrivial tight cut in $G$, $M\in\Perf{G}$, and $\Brace{T,\DecompBijection{}}$ an $M$-decomposition of $G$ of width $k$.
Moreover, let $G_Z$ be the matching covered graph obtained from $G$ by contracting $Z$ into the vertex $v_Z$.
Then, there is an \hyperlink{remainderDef}{$\Remainder{M}{G_Z}$}-perfect matching decomposition of $G_Z$ of width at most $k$.
\end{proposition}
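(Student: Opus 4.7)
The plan is to use the $Z$-orientation $\ZOrientation{T}{Z}$ together with its unique sink $s$ (guaranteed by Lemma~\ref{lemma:uniquesink}) to decide where the contraction vertex $v_Z$ should live. Write $xy\in M$ for the unique $M$-edge in $\Cut{Z}$ with $x\in Z$ and let $\ell_1\in\Leaves{T}$ be the leaf with $\Fkt{\delta}{\ell_1}=x$; the argument in the proof of Lemma~\ref{lemma:uniquesink} shows that $s$ is precisely the inner neighbour of $\ell_1$. From $(T,\delta)$ I would construct $(T',\delta')$ by deleting every leaf $\ell$ with $\Fkt{\delta}{\ell}\in Z\setminus\Set{x}$, iteratively pruning any degree-$1$ vertex thus created, trimming the resulting subcubic tree into a cubic tree $T'$, and finally setting $\Fkt{\delta'}{\ell_1}\coloneqq v_Z$ while keeping all other leaf images.

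For the width bound, I would identify each $e\in\Fkt{E}{T'}$ with an edge $e'\in\Fkt{E}{T}$ inducing the same bipartition of $\V{G_Z}$. The cut induced by an edge of $T'$ incident to a leaf $\ell\neq\ell_1$ is trivial and its porosity cannot grow under contraction; the edge incident to $\ell_1$ induces $\CutG{\Set{v_Z}}{G_Z}$, of porosity~$1$. For an inner edge $e$, let $X$ be the shore of $\Cut{e'}$ in $G$ containing $\ell_1$. Because all edges of $\ZOrientation{T}{Z}$ point toward $s$, the definition of the $Z$-orientation forces $\Abs{X\cap Z}$ to be odd; because $(T,\delta)$ is an $M$-\PM-decomposition, both shores of $\Cut{e'}$ are $M$-\conformal and therefore even, so Lemma~\ref{lemma:parityporosity} makes $\matPor{\CutG{X}{G}}$ even. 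Lemma~\ref{lemma:placecontractionvertex} then yields $\matPor{\CutG{(X\setminus Z)\cup\Set{v_Z}}{G_Z}}\leq\matPor{\CutG{X}{G}}\leq k$, which is exactly the porosity of $\Cut{e}$ in $G_Z$.

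To check that $(T',\delta')$ is an $\Reduct{M}{G_Z}$-\PM-decomposition, observe that for every inner edge $e$ the endpoints $x,y$ of the unique $M$-edge across $\Cut{Z}$ lie in the same $M$-\conformal shore; since $x$ lies on the $\ell_1$-side $X$, so does $y$, and $v_Z$ is placed on that same side by construction. The edge $xy$ becomes $v_Zy$ in $\Reduct{M}{G_Z}$ and the $M$-edges internal to $X\cap Z$ collapse, which together with the surviving $M$-edges in $X\setminus Z$ yield a perfect matching of the subgraph of $G_Z$ induced by $(X\setminus Z)\cup\Set{v_Z}$. The opposite shore loses only the even set $\overline{X}\cap Z$ of vertices, paired up among themselves by $M$, so $\Reduct{M}{G_Z}$ also restricts to a perfect matching there.

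The main obstacle I expect is the bookkeeping around the trimming step: one must make sure that each edge of $T'$ genuinely corresponds to an edge of $T$ inducing the same leaf bipartition, so that the $Z$-orientation information and the application of Lemma~\ref{lemma:placecontractionvertex} transfer faithfully. Once that identification is set up carefully, the rest of the argument follows directly from the lemmas already in place in this section.
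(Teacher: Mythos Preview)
Your proposal is correct and follows essentially the same approach as the paper: use the unique sink of $\ZOrientation{T}{Z}$ from Lemma~\ref{lemma:uniquesink} to locate the leaf $\ell_1$ with $\Fkt{\delta}{\ell_1}=x$, delete the leaves mapped into $Z\setminus\Set{x}$, trim, relabel $\ell_1$ by $v_Z$, and bound the porosity of each inner edge via Lemma~\ref{lemma:placecontractionvertex} using that the $\ell_1$-side shore has odd intersection with $Z$ and even matching porosity. Your treatment is in fact slightly more careful than the paper's in two places: you explicitly include the pruning of spurious degree-$1$ vertices before trimming, and you spell out the verification that both shores remain $\Reduct{M}{G_Z}$-conformal, which the paper asserts in one line.
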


\begin{proof}
We consider the $Z$-orientation $\ZOrientation{T}{Z}$ of $T$.
By \cref{lemma:uniquesink}, $\ZOrientation{T}{Z}$ is free of inconsistencies and has a unique sink $s$.
Moreover, as we have seen, $s$ is adjacent to two leaves $t_x$ and $t_y$ of $T$ such that $\DecompBijection{t_x}=x\in Z$, $\DecompBijection{t_y}=y\in\Compl{Z}$ and $xy\in M$ is the unique edge of $M$ in $\Cut{Z}$.

We now construct a perfect matching decomposition $\Brace{T',\DecompBijection{}'}$ for $G_Z$.
To this end, let $L_Z\coloneqq\CondSet{t\in\Leaves{T}}{\DecompBijection{t}\in Z\setminus\Set{x}}$ and $T''$ be the tree obtained from $T-L_Z$ by repeatedly removing leaves that do not lie in the domain of $\nu$.
Then, let $T'$ be the cubic tree obtained from $T''$ by trimming.
Then $\Leaves{T'}=\Leaves{T}\setminus L_Z$ and for every $t\in\Leaves{T'}$ and every inner edge $t_1t_2\in\Fkt{E}{T'}$, $t$ is a leaf of the tree $T_i'$, $i \in \Set{t_1,t_2}$, if and only if $t$ is a leaf of the subtree $T_i$.
Therefore, every bipartition of $\Leaves{T}$ induced by an inner edge of $T'$ is also induced by an edge in $T$.

To obtain $\DecompBijection{}'$ from $\DecompBijection{}$ we do not change anything for $\Compl{Z}$ and replace $x$ by $v_Z$. So for all $t\in\Leaves{T'}$ let
\begin{equation*}
	\Fkt{\DecompBijection{}'}{t}\coloneqq\TwoCases{v_Z}{\text{if}~\DecompBijection{t}=x,~\text{and}}{\DecompBijection{t}}{\text{otherwise.}}
\end{equation*}

The restriction $\Remainder{M}{G_Z}$ of $M$ to $G_Z$ contains all edges with both endpoints in $\Compl{Z}$ and additionally the edge $yv_Z$, so by construction, $\Brace{T',\DecompBijection{}'}$ it is an $\Remainder{M}{G_Z}$-perfect matching decomposition of $G_Z$.

Now, let $t_1t_2\in\Fkt{E}{T'}$ be an inner edge and $\CutG{G}{t_1t_2}$ the cut induced by $t_1t_2$ in $G$ via $\Brace{T,\DecompBijection{}}$.
Then, $\CutG{G}{t_1t_2}$ has a unique shore $X\subseteq\Fkt{V}{G}$ that contains $x$ and, as $\Brace{T,\DecompBijection{}}$ is an $M$-decomposition, $\Abs{X}$ is even.
Moreover, the cut $\CutG{G_Z}{t_1t_2}$ induced by $t_1t_2$ in $G_Z$ via $\Brace{T',\DecompBijection{}'}$ has a shore $X' \coloneqq \Brace{X\setminus{Z}}\cup\Set{v_Z}$.
As $\Abs{X\cap Z}$ is odd by choice of $x$, \cref{lemma:placecontractionvertex} gives us $\MatPor{\CutG{G_Z}{X'}}\leq\MatPor{\CutG{G}{X}}\leq k$ and thus concludes this proof.
\end{proof}

Since \cref{thm:tightcutctontractionsbound} provides an $\Remainder{M}{G_Z}$-perfect matching decomposition of the tight cut contraction $G_Z$, we can now choose a new tight cut in $G_Z$ and continue with a new iteration of the tight cut decomposition procedure.
So finally, we reach decompositions of the bricks and braces of $G$ with width still bound by the width of the original $M$-decomposition.
By then applying \cref{thm:Mpmwandpmw} we obtain the following corollary.

\begin{corollary}\label{cor:bricksandbraceslowerbound}
Let $G$ be a matching covered graph and $H$ a brick or brace of $G$.
Then $\pmw{H}\leq 2\pmw{G}$.
\end{corollary}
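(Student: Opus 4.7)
The plan is to combine the bounds of \cref{prop:pmw_Mpmw} with an iterated application of \cref{thm:tightcutctontractionsbound}. Setting $k \coloneqq \pmw{G}$, I would first invoke the right inequality of \cref{prop:pmw_Mpmw} to pick a perfect matching $M \in \Perf{G}$ together with an $M$-\PM-decomposition of $G$ of width at most $2k$. By \cref{thm:lovasztightcuts}, the brick or brace $H$ arises from $G$ by some finite sequence of \nontrivial tight cut contractions $G = G_0, G_1, \dots, G_r = H$, and this sequence does not depend on any choice once the final list of bricks and braces is fixed.

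Next, I would proceed by induction on $i$. Given an $M_i$-\PM-decomposition of $G_i$ of width at most $2k$ (starting with $M_0 \coloneqq M$), \cref{thm:tightcutctontractionsbound} supplies an $\Reduct{M_i}{G_{i+1}}$-\PM-decomposition of $G_{i+1}$ of width at most $2k$; setting $M_{i+1} \coloneqq \Reduct{M_i}{G_{i+1}}$ keeps the output in exactly the right form to feed back into the same proposition. After $r$ iterations we obtain an $M_r$-\PM-decomposition of $H$ of width at most $2k$, and the left inequality of \cref{prop:pmw_Mpmw} then yields $\pmw{H} \leq \Mpmw{M_r}{H} \leq 2k = 2\pmw{G}$, which is the claim.

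Since every step is a direct invocation of a previously established result, no substantial obstacle is expected; the only point requiring care is observing that the output of \cref{thm:tightcutctontractionsbound} is again an $M'$-\PM-decomposition (not merely a \PM-decomposition), which is precisely what allows the induction to be iterated all the way down to $H$ rather than breaking after a single contraction.
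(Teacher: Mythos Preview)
Your proposal is correct and follows essentially the same route as the paper: pick an $M$-\PM-decomposition of $G$ of width at most $2\pmw{G}$ via \cref{prop:pmw_Mpmw}, iterate \cref{thm:tightcutctontractionsbound} along a sequence of tight cut contractions down to $H$, and then read off $\pmw{H}\leq\Mpmw{M_r}{H}\leq 2\pmw{G}$. The only inessential wrinkle is your remark that the sequence ``does not depend on any choice''---in fact different tight cut sequences may exist, but you only need the existence of one such sequence, which the tight cut decomposition procedure guarantees.
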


So by iteratively contracting tight cuts we cannot significantly increase the perfect matching width.
As bicontractions are a special case of tight cut contractions and by \cref{lemma:subgraphwidth} the width of a conformal subgraph of $G$ is bounded by the width of $G$ itself, we finally obtain our main result on matching minors of this section.

\pmwmatminor*

\Cref{cor:bricksandbraceslowerbound} and \cref{prop:upperboundbricksandbraces} now yield the wanted relation between the perfect matching width of the bricks and braces and the width of the graph itself.
\pmwbricksbraces*

Moreover, if we consider the $M$-perfect matching width of a matching covered graph $G$, we obtain an even stronger result which concludes this section.

\begin{restatable}{lemma}{mpmwmatchingminorlemma}\label{lemma:Mpmwandmatminors}
Let $G$ be a matching covered graph with a perfect matching $M$ and $H$ be a matching minor of $G$ obtained from an $M$-conformal subgraph of $G$ by a series of bicontractions, or a brick or brace of $G$.
Then $\Mpmw{\Remainder{M}{H}}{H}\leq\Mpmw{M}{G}$.
\end{restatable}

\section{Braces of Perfect Matching Width 2}\label{sec:pmw2}

The only matching covered graph of perfect matching width one is $K_2$.
Apart from this every perfect matching decomposition of a matching covered graph contains a vertex that is adjacent to two leaves (which, by definition, are mapped to two distinct vertices of $G$) and, as $G$ is matching covered, there is a perfect matching which does not match these vertices with each other.
Therefore, the cut in $G$ induced by the non-leaf edge of said vertex in the decomposition has matching porosity two.
So two is a natural lower bound on the perfect matching width of braces.
One approach to width parameters can be to investigate the structure of graphs of small width.
Since by \cref{prop:upperboundbricksandbraces} the perfect matching width of a graph is bounded from above by the width of its bricks and braces, studying the structure of braces of perfect matching width two appears to be a good starting point towards a better understanding of the parameter itself.
We present two possible characterisations of perfect matching width two braces, one in terms of edge-maximal graphs similar to the $k$-tree characterisation of treewidth $k$ graphs (see \cite{arnborg1985efficient} for an overview on this topic) and the other one in terms of elimination orderings, which again resembles similar results on treewidth.

\subsection{Generalised Tight Cuts}

We start by introducing some additional facts about braces and the edge cuts that can be found within a brace.
Tight cuts are defined as those edge cuts which contain exactly one edge of every perfect matching of our graph.
In a similar way we may define generalised versions of these cuts.

\begin{definition}[Generalised Tight Cut]
	Let $G$ be a matching covered graph, $k\in\N$ a positive integer, and $X\subseteq\V{G}$.
	The edge cut $\CutG{G}{X}$ is \emph{$k$-tight} if $\Abs{\CutG{G}{X}\cap M}=k$ for all $M\in\Perf{G}$.
	If $\CutG{G}{X}$ is a $k$-tight cut, we say that $X$ \emph{induces} a $k$-tight cut.
	A $k$-tight cut is \emph{trivial} if $\Abs{X}= k$ or $\Abs{\Compl{X}}= k$.
\end{definition}

In the following we investigate the properties of $k$-tight cuts in bipartite graphs, and braces in particular, a bit further.

\begin{definition}[Minority and Majority]
	Let $B$ be a bipartite graph, and $X\subseteq\V{G}$.
	If $\Abs{X\cap V_1}=\Abs{X\cap V_2}$ we say that $X$ is \emph{balanced}, otherwise it is \emph{unbalanced}.
	Suppose $X$ is unbalanced, then there are $i,j\in\Set{1,2}$, and $k\in\N$ such that $\Abs{X\cap V_i}=\Abs{X\cap V_j}+k$.
	In this case we call $X\cap V_i$ the \emph{majority} of $X$, denoted by $\Majority{X}$, and $X\cap V_j$ is the \emph{minority}, denoted by $\Minority{X}$.
	We say that $k$ is the \emph{imbalance} of $X$, and in general we set
	\begin{align*}
		\Imbalance{X}\coloneqq\TwoCases{0}{\text{if $X$ is balanced, or}}{k}{\text{if the imbalance of $X$ is $k$.}}
	\end{align*}
\end{definition}

\begin{lemma}\label{lemma:ktightcolours}
	Let $B$ be a bipartite matching covered graph, $k\in\N$ a positive integer, and $X\subseteq\V{B}$ a set of vertices that induces a $k$-tight cut.
	Then there exist $k_1,k_2\in\N$ such that for every perfect matching $M\in\Perf{B}$ there are exactly $k_i$ vertices of $X\cap V_i$ which are matched by edges of $\CutG{B}{X}\cap M$ for both $i\in\Set{1,2}$.
\end{lemma}

\begin{proof}
	Let $M$ be some perfect matching of $B$ and for both $i\in\Set{1,2}$, let $k_i$ be the number of vertices in $X\cap V_i$ which are matched by edges of $\CutG{B}{X}\cap V_i$.
	Then every other edge of $M$ either has both or no endpoint in $X$.
	Hence there is a number $n\in\N$ of edges of $M$ with both endpoints in $X$ such that $\Abs{X}=k_1+k_2+2n$.
	Now suppose, towards a contradiction, there exist $k_1',k_2'\in\N$ together with a perfect matching $M'\in\Perf{B}$ such that for each $i\in\Set{1,2}$, $k_i'$ is the number of vertices of $X\cap V_i$ that are matched by edges of $\CutG{B}{X}$, and $k_1'\neq k_1$, which also implies $k_2'\neq k_2$.
	By the same arguments as before, there exists a number $n'$ such that $\Abs{X}=k_1'+k_2'+2n'$.
	Indeed, we have $\Abs{X\cap V_i}=k_i+n=k_i'+n'$ for both $i\in\Set{1,2}$.
	Without loss of generality, let us assume $k_1'>k_1$.
	Then $n'<n$ since $k_1+n=k_1'+n'$.
	But since $\CutG{B}{X}$ is $k$-tight, we have $k_1+k_2=k=k_1'+k_2'$.
	Hence
	\begin{align*}
		\Abs{X}=k+2n>k+2n'=\Abs{X},
	\end{align*}
	which is impossible and thus our claim holds.
\end{proof}

The following can be seen as a generalisation of an observation first made by Lov\'asz (see the proof of Lemma 1.4 in \cite{lovasz1987matching}).

\begin{lemma}\label{lemma:ktightmajorityminority}
	Let $B$ be a bipartite matching covered graph, $k\in\N$ a positive integer, and $X\subseteq\V{G}$ a set of imbalance $k$.
	Then $\CutG{B}{X}$ is $k$-tight if and only if $\NeighboursG{B}{\Minority{X}}\subseteq\Majority{X}$.
\end{lemma}

\begin{proof}
	Let us first assume $X$ induces a $k$-tight cut and suppose there is some edge $e\in\CutG{B}{X}$ such that $e$ has an endpoint in $\Minority{X}$.
	As $G$ is matching covered there exists $M_e\in\Perf{B}$ such that $e\in M_e$.
	Now there are $\Abs{\Minority{X}}-1$ many vertices of the minority left which can be matched by $M_e$ to vertices of the majority of $X$.
	Hence at least $k+1$ vertices of $\Majority{X}$ cannot be matched by $M_e$ with vertices inside $X$.
	This however means that $\Abs{\CutG{B}{X}\cap M_e}\geq k+2$, contradicting the assumption that $X$ induces a $k$-tight cut.
	
	For the reverse direction, let us assume $\NeighboursG{B}{\Minority{X}}\subseteq\Majority{X}$.
	Then for every $M\in\Perf{B}$, every vertex of $\Minority{X}$ must be matched with a  vertex of $\Majority{X}$, therefore leaving exactly $\Imbalance{X}=k$ vertices of $\Majority{X}$ which must be matched via edges of $\CutG{B}{X}$.
	Therefore $\Abs{\CutG{B}{X}\cap M}=k$ for all $M\in\Perf{B}$.
\end{proof}

\subsection{Spines and Cuts in \texorpdfstring{$k$}{k}-Extendable Bipartite Graphs}
\label{sec:general_k}

In this subsection we present a few results on general $k$, which we then make use of for $k=2$ over the next subsections.
We start by introducing the notion of $k$-extendability, which, in the case $k=2$ is a property equivalent to the absence of non-trivial tight cuts in bipartite matching covered graphs.

\begin{definition}
	Let $G$ be a graph with a perfect matching and $F\subseteq\E{G}$ a matching.
	We say that $F$ is \emph{extendable} if there exists $M\in\Perf{G}$ such that $F\subseteq M$.
	
	For any positive integer $k\in\N$, $G$ is said to be \emph{$k$-extendable} if it is connected, has at least $2k+2$ vertices, and every matching of size $k$ in $G$ is extendable.
\end{definition}

This notion provides a characterisation of braces.

\begin{theorem}[\cite{lovasz2009matching}]\label{thm:braces}
	A bipartite graph $B$ is a brace if and only if it is either isomorphic to $C_4$, or it is $2$-extendable.
\end{theorem}

In some sense \cref{thm:braces} generalises to higher values of extendability.

\begin{theorem}[\cite{plummer1986matching}]\label{thm:bipartiteextendability}
	Let $B$ be a bipartite graph and $k\in\N$ a positive integer.
	The following statements are equivalent.
	\begin{enumerate}
		\item $B$ is $k$-extendable.
		\item $\Abs{V_1}=\Abs{V_2}$, and for all non-empty $S\subseteq V_1$ with $\Abs{S}\leq\Abs{V_1}-k$, $\Abs{\NeighboursG{B}{S}}\geq \Abs{S}+k$.
		\item For all sets $S_1\subseteq V_1$ and $S_2\subseteq V_2$ with $\Abs{S_1}=\Abs{S_2}\leq k$ the graph $B-S_1-S_2$ has a perfect matching.
	\end{enumerate}
\end{theorem}

Additionally, we need the following properties of $k$-extendable graphs and the well-known K\H{o}nig's Theorem.

\begin{theorem}[\cite{plummer1980n}]\label{thm:smallerextendabilites}
	Let $k\in\N$ be a positive integer.
	Then every $k$-extendable graph is also $\Brace{k-1}$-extendable.
\end{theorem}

\begin{theorem}[\cite{plummer1980n}]\label{thm:extendabilitytoconnectivity}
	Let $k\in\N$ be a positive integer.
	Then every $k$-extendable graph is $\Brace{k+1}$-connected.
\end{theorem}

\begin{theorem}[{K\H{o}nig's Theorem (see for example \cite{lovasz2009matching})}]
	\label{thm:konig}
	If $B$ is a bipartite graph, then $\VCNum{B} = \MatNum{B}.$
\end{theorem}

Using these we can establish that in bipartite $k$-extendable graphs no cut of small porosity can have two large shores.
To this end we define for every graph $G$ and cut $\CutG{G}{X}$ in $G$ the graph $\InducedSubgraph{G}{\CutG{G}{X}}$ as the subgraph of $G$ induced by all the edges in $\CutG{G}{X}.$

\begin{lemma}
	\label{lem:small_porosity_implies_separator}
	Let $k \in \N$ and $B$ be a bipartite $k$-extendable graph and $X \subseteq \V{B},$ then one of the following holds for every $k' \leq k$:
	\begin{enumerate}
		\item $\MatNum{\InducedSubgraph{B}{\CutG{B}{X}}} > k',$
		\item $\Abs{X} \leq k',$ or
		\item $\Abs{\Compl{X}} \leq k'.$
	\end{enumerate}
\end{lemma}
\begin{proof}
	We assume $\MatNum{\InducedSubgraph{B}{\CutG{B}{X}}} \leq k'.$
	As $B$ is bipartite, the graph $\InducedSubgraph{B}{\CutG{B}{X}}$ is as well.
	By \cref{thm:konig} (K\H{o}nig's Theorem), we thus obtain $\VCNum{\InducedSubgraph{B}{\CutG{B}{X}}} = \MatNum{\InducedSubgraph{B}{\CutG{B}{X}}} \leq k'.$
	So there is a set of vertices $S$ of size at most $k'$ hitting all edges crossing $\CutG{B}{X}.$
	This means $S$ is a separator of size at most $k'$ in $B$ separating $X \setminus S$ from $\Compl{X} \setminus S.$
	By \cref{thm:extendabilitytoconnectivity}, $B$ is $k+1$-connected, therefore $X \subseteq S$ and thus $\Abs{X} \leq k',$ or $\Compl{X} \subseteq S$ and thus $\Abs{\Compl{X}} \leq k'.$
\end{proof}

We now have all necessary tools available to investigate the structure of optimal perfect matching decompositions of $k$-extendable bipartite graphs whose perfect matching width is close to $k$.

Since we are interested in braces, it seems natural to ask for the structure of non-trivial cuts in highly extendable bipartite graphs as one might find some connection to proper $k$-tight cuts and the relation between minority and majority as seen in \cref{lemma:ktightmajorityminority}.

In the following we denote the size of a maximum matching in a graph $G$ by $\MatNum{G}$ and the size of a minimum \emph{vertex cover} by $\VCNum{G}$, where a vertex cover of $G$ is a set of vertices $S\subseteq\V{G}$ such that every edge of $G$ has at least one endpoint in $S$.

We establish a connection between the matching porosity of a cut and its imbalance in $k$-extendable bipartite graphs.

\begin{lemma}\label{lemma:imbalance}
	Let $k\in\N$ be a positive integer, $B$ be a $k$-extendable and bipartite graph, and $X\subseteq\Fkt{V}{G}$ such that $\MatPor{\CutG{B}{X}}=k$ and $k+2\leq\Abs{X}\leq\Abs{\V{B}}-\Brace{k+2}$.
	Then $\Imbalance{X}=k$.
\end{lemma}

\begin{proof}
 We start by observing that every perfect matching of $B$ has at most $\Minority{X}$ many edges matching two vertices of $X.$
	Thus, the matching porosity of $\CutG{B}{X}$ yields an upper bound on the imbalance of $X,$ that is, $k = \MatPor{\CutG{B}{X}} \geq \Abs{X} - 2\Abs{\Minority{X}} = \Imbalance{X}.$
	By \cref{lemma:parityporosity} and as the imbalance and the size of a vertex set have the same parity, we know that $k \equiv \Abs{X} \equiv \Imbalance{X} \pmod 2.$
	
	Suppose towards a contradiction that $k' \coloneqq \Imbalance{X} \leq k-2.$
	Additionally, we may assume without loss of generality that $\Majority{X} \subseteq V_2.$
	We split $\InducedSubgraph{B}{\CutG{B}{X}}$ into the following two subgraphs.
	\begin{align*}
		B_1 &\coloneqq \InducedSubgraph{B}{ \Minority{X} \cup \Brace{V_2\setminus\Majority{X}} }, \text{~and}\\
		B_2 &\coloneqq \InducedSubgraph{B}{ \Brace{V_1\setminus\Minority{X}} \cup \Majority{X} }.
	\end{align*}
	We have $B_1 \cup B_2 = \InducedSubgraph{B}{\CutG{B}{X}}.$

	Suppose $\MatNum{B_1} \geq \frac{k-k'}{2}+1,$ then there is a matching $F$ of size $\frac{k-k'}{2}+1$ in $B_1.$
	As $\Abs{F} = \frac{k-k'}{2}+1 \leq k$ and $B$ is $k$-extendable, there is a perfect matching $M_F$ of $B$ with $F\subseteq M_F.$
	Due to $\MatPor{\CutG{B}{X}}=k,$ at most $k- \Brace{\frac{k-k'}{2}+1} = \frac{k+k'}{2}-1$ edges of $M_F \cap \CutG{B}{X}$ have an endpoint in $\Majority{X}.$
	Thus we obtain
	\begin{align*}
		\Abs{\Majority{X}\setminus\V{M_F \cap \CutG{B}{X}}} & \geq \Abs{\Majority{X}}-\Brace{\frac{k+k'}{2}-1}\\
		& = \Abs{\Majority{X}}-\frac{k}{2}-\frac{k'}{2}+1\\
		& = \Abs{\Minority{X}}+k'-\frac{k}{2}-\frac{k'}{2}+1\\
		& > \Abs{\Minority{X}} - \frac{k-k'}{2} -1\\
		&  \geq \Abs{\Minority{X} \setminus \V{M_F \cap \CutG{B}{X}}}.
	\end{align*}
	Therefore, $\Imbalance{X\setminus\V{M_F \cap \CutG{B}{X}}}\geq 1,$ contradicting $M_F$ being a perfect matching.
	Thus, $\MatNum{B_1}\leq\frac{k-k'}{2}.$
	With similar arguments $\MatNum{B_2}\geq\frac{k+k'}{2}+1$ yields a contradiction.
	Thus, $\MatNum{B_2}\leq\frac{k+k'}{2}.$
	It follows that 
	\begin{equation*}
		\MatNum{\InducedSubgraph{B}{\CutG{B}{X}}} = \MatNum{B_1} + \MatNum{B_2} \leq \frac{k-k'}{2}+\frac{k+k'}{2}=k.
	\end{equation*}
	Together with $k+2\leq\Abs{X}\leq\Abs{\V{B}}-\Brace{k+2},$ this contradicts \cref{lem:small_porosity_implies_separator}.
	Thus, we obtain that $\Imbalance{X}=k.$
\end{proof}

\Cref{lemma:imbalance} establishes the distribution of the two colours $V_1$ and $V_2$ in any set of matching porosity $k$ of sufficient size in a $k$-extendable brace.
Note that as $k$ is the matching porosity it always is even.

Next, we show that decompositions with the special structure of the spine of the spine of the decomposition tree being a path have the following property.
The edges of this path having matching porosity exactly $k$ induce shores that have imbalance $k$ and the neighbourhood of the minority of these shores is completely contained in the shore.
So the shores have a kind of closure property when it comes to the minority: no vertex of the minority has neighbours outside the shore.

\begin{theorem}
	\label{prop:minority_closed_shores}
	Let $k\geq 2$ be an integer and $B$ be a $k$-extendable bipartite graph with a perfect matching decomposition $\Brace{T,\DecompBijection{}}$ of width $k$ such that $\Spine{\Spine{T}}$ is a path.
	Then, for all $e \in \Spine{\Spine{T}}$ with $\MatPor{\CutG{B}{e}} = k,$ every shore $X$ of $\CutG{B}{e}$ satisfies
	\begin{enumerate}
		\item $\Imbalance{X}=k,$ and
		\item $\NeighboursG{B}{\Minority{X}}\subseteq X.$
	\end{enumerate}
\end{theorem}

\begin{proof}
	We first consider how the colour classes can be distributed in the shores of two cuts $X$ and $Y$ corresponding to two adjacent edges in the path $\Spine{\Spine{T}}$ such that $X \subseteq Y.$
	We claim that $X$ and $Y$ differ by exactly one vertex from each colour class.
	
	\begin{claim}
		\label{lem:preserve_majority}
		If $\Abs{\V{B}}\geq 2k+4$ and $e_1,$ $e_2$ two adjacent edges of $\Spine{\Spine{T}}$ such that $\CutG{B}{e_1}$ has a shore $X_1$ and $\CutG{B}{e_2}$ has a shore $X_2$ with $X_1 \subseteq X_2$ and $\MatPor{\CutG{B}{X_1}} = \MatPor{\CutG{B}{X_2}} = k.$
		Then, we have $\Abs{X_1\cap V_i}+1 = \Abs{X_2\cap V_i}$ for both $i\in\Set{1,2}.$
		See \cref{fig:X_1-and-X_2} for an illustration.
	\end{claim}
 
\begin{figure}[!ht]
	\centering
	\resizebox{5cm}{!}{
	\begin{tikzpicture}
		\def\dist{1}
		\node[vertex] (center) at (0,0) {};
		\node[vertex] (e1) at ($(center)+(-1.7*\dist,0)$) {};
		\node (x-l) at ($(e1)+(-\dist,0)$) {$\dots$};
		\node[vertex] (e2) at ($(center)+(1.7*\dist,0)$) {};
		\node (x-r) at ($(e2)+(\dist,0)$) {$\dots$};
		\node[vertex] (top) at ($(center)+(0,\dist)$) {};
		\node[vertexW] (tr) at ($(top)+(90-40:\dist)$) {};
		\node[vertexB] (tl) at ($(top)+(90+40:\dist)$) {};
		
		\draw[edge] (e1) -- (center) node [midway, below] {$e_1$};
		\draw[edge] (e2) -- (center) node [midway, below] {$e_2$};
		\draw[edge] (top) to (center);
		\draw[edge] (top) to (tr);
		\draw[edge] (top) to (tl);
		
		\draw[edge,myRed,bend left] ($(e1)+(90:2*\dist)$) to ($(e1)+(270:2*\dist)$);
		\draw[edge,myGreen,bend left,looseness=0.8] ($(center)+(70:2.5*\dist)$) to ($(center)+(290:2.5*\dist)$);
		
		\node at ($(e1)+(100:2*\dist)$) {\textcolor{myRed}{$X_1$}};
		\node at ($(center)+(80:2.5*\dist)$) {\textcolor{myGreen}{$X_2$}};
	\end{tikzpicture}}
	\caption{There are exactly two vertices in $X_2\setminus X_1$ and they come from different colour classes of $B.$}
	\label{fig:X_1-and-X_2}
\end{figure}
	\begin{claimproof}
		\Cref{lemma:parityporosity} and $\Spine{\Spine{T}}$ being a path together imply $\Abs{X_2}-\Abs{X_1} = 2.$
		Suppose towards a contradiction that both vertices in $X_2 \setminus X_1$ are from the same colour class, without loss of generality say $V_1,$ that is, $\Abs{X_1\cap V_1}+2=\Abs{X_2\cap V_1}.$
		Due to $\MatPor{\CutG{B}{X_2}} = k$ we know $\Abs{\Compl{X_1}} \geq k+2$ and $\Abs{X_2}\geq k + 2.$
		So, by \cref{lemma:imbalance}, we have $\Imbalance{X_1}=\MatPor{\CutG{B}{X_1}} = k,$ or $\Abs{X_1} = k$ for $X_1$ and we have $\Imbalance{X_2}=\MatPor{\CutG{B}{X_2}} = k,$ or $\Abs{\Compl{X_2}} = k.$
		
		If $\Abs{X_1} = k$ and $\Abs{\Compl{X_2}} = k,$ then $\Abs{\V{B}} = \Abs{X_1}+2+\Abs{\Compl{X_2}} = 2k+2,$ which contradicts $\Abs{\V{B}}\geq 2k+4.$
		So we have $\Imbalance{X_1}=k$ or $\Imbalance{X_2}=k.$
		Suppose only one of them holds, so without loss of generality consider the case that $\Imbalance{X_1}\leq k-2$ and $\Imbalance{X_2}=k.$
		This implies that $\Majority{X_1}\subseteq V_1$ and $\Majority{X_2}\subseteq V_1.$
		We split $\InducedSubgraph{B}{\CutG{B}{X_2}}$ into the two subgraphs
		\begin{align*}
			B_1 &\coloneqq \InducedSubgraph{B}{\Majority{X_2}\cup\Majority{\Compl{X_2}}} \text{~and}\\
			B_2 &\coloneqq \InducedSubgraph{B}{\Minority{X_2}\cup\Minority{\Compl{X_2}}}.
		\end{align*}
		We have $B_1 \cup B_2 = \InducedSubgraph{B}{\CutG{B}{X_2}},$ and $B_1$ and $B_2$ are disjoint.
		Due to $\Abs{X_1} = k$ and $\Imbalance{X_1}\leq k-2,$ we know
		$\Abs{\Minority{X_2}}=\Abs{\Minority{X_1}}=2$ and thus, $\MatNum{B_2} \leq 2.$
		
		Suppose $\MatNum{B_1} \geq k,$ that is, there is a matching $F$ of size $k$ in $B_1.$
		By \cref{thm:smallerextendabilites}, $B$ has a perfect matching $M_F$ containing $F.$
		The set $X_1 \setminus \V{F}$ contains two vertices of $V_2$ and no vertex of $V_1.$
		So $M_F$ maps both these vertices to vertices in $\Compl{X_2}.$
		Thus, $\Abs{M_F \cap \CutG{B}{X_2}} = \Abs{X_2} = k+2,$ which contradicts $\MatPor{\CutG{B}{X_2}} = k.$
		Therefore, we have $\MatNum{B_1} \leq k-1.$
		
		Similarly, suppose there is a matching $F$ of size two in $B_2.$
		Then, by \cref{thm:smallerextendabilites}, $B$ has a perfect matching $M_F$ containing $F.$
		The set $X_2 \setminus \V{F}$ contains $k$ vertices of $V_1$ and no vertex of $V_2.$
		So $M_F$ maps all these $k$ vertices to vertices in $\Compl{X_2}.$
		Thus, $\Abs{M_F \cap \CutG{B}{X_2}} = \Abs{X_2} = k+2,$ which contradicts $\MatPor{\CutG{B}{X_2}} = k.$
		Therefore, we have $\MatNum{B_2}\leq 1.$
		
		So we obtain $\MatNum{\InducedSubgraph{B}{\CutG{B}{X_2}}} = \MatNum{B_1} + \MatNum{B_2} \leq k,$ which together with $\Abs{X_2}\geq k + 2$ and $\Abs{\Compl{X_2}}\geq k + 2$ contradicts \cref{lem:small_porosity_implies_separator}.
		
		Thus, we know that $\Imbalance{X_1} = \Imbalance{X_2} = k.$
		But this contradicts that the two vertices in $X_2 \setminus X_1$ come from the same colour class.
		So, we obtain $\Abs{X_1\cap V_i}+1=\Abs{X_2\cap V_i}$ for both $i\in\Set{1,2}.$
	\end{claimproof}

	We define $P \coloneqq \Spine{\Spine{T}}$ with the two endpoints $p_{\triangleleft}$ and $p_{\triangleright}.$
	We order the edges $\Brace{e_1,\dots,e_{\ell}}$ of $P$ by occurrence along $P$ when traversing it from $p_{\triangleleft}$ to $p_{\triangleright}.$
	Next, we show that $P$ contains two edges that induce a cut with one shore building a star with $k+1$ leaves.

	\begin{claim}
		\label{lem:end_claws}
		The tree $T$ contains edges $e_{\triangleleft}$ and $e_{\triangleright}$ such that for $\diamond \in \Set{\triangleleft,\triangleright}$ the cut $\CutG{B}{e_{\diamond}}$ has a shore $X_{\diamond}$ of size $k+2$ satisfying the following conditions.
		\renewcommand{\labelenumi}{\textbf{\theenumi}}
		\renewcommand{\theenumi}{(\roman{enumi})}
		\begin{enumerate}[labelindent=0pt,labelwidth=\widthof{\ref{last-item-k-ext-claim-2}},itemindent=1em]
			\item \label{k-ext-claim-2:s=t} $e_{\triangleleft} = e_{\triangleright}$ if and only if $\Abs{\V{B}} = 2k+4,$
			\item \label{k-ext-claim-2:disjoint} $X_{\triangleleft} \cap X_{\triangleright}=\emptyset,$
			\item \label{k-ext-claim-2:different_colours} if $\Majority{X_{\triangleleft}}\subseteq V_{i},$ then $\Minority{X_{\triangleright}}\subseteq V_{\Abs{3-i}}$ for $i \in \Set{1,2},$ and
			\item \label{k-ext-claim-2:stars} $\InducedSubgraph{B}{X_{\diamond}}$ is a star such that its central vertex has no neighbour in $\Compl{X_{\diamond}}$ for both $\diamond \in \Set{\triangleleft,\triangleright}.$ \label{last-item-k-ext-claim-2}
		\end{enumerate}
	\end{claim}
	\begin{claimproof}
		Choose $j$ minimal with $\MatPor{\CutG{B}{e_j}} = k.$
		Let $T_{\triangleleft}$ be the component in $T - e_j$ that contains $p_{\triangleleft}$ and $X'_{\triangleleft} \coloneqq \DecompBijection{T_{\triangleleft}}.$
		
		Consider the size of $X'_{\triangleleft}.$
		Due to \cref{lemma:parityporosity}, we can defer $\Abs{X_{\triangleleft}'} \neq k+1.$
		Suppose towards a contradiction that $\Abs{X'_{\triangleleft}} \geq k+2.$
		By \cref{lemma:imbalance}, this implies $\Imbalance{X'_{\triangleleft}} = k.$
		Consider the shore $X''_{\triangleleft}$ of $\CutG{B}{e_{j-1}}$ with $X''_{\triangleleft} \subseteq X'_{\triangleleft}.$
		By minimality of $j,$ we know that $\MatPor{\CutG{B}{e_{j-1}}} \leq k-1,$ and thus, $\Imbalance{X''_{\triangleleft}} \leq k-1.$
		By $\Abs{X'_{\triangleleft}} - \Abs{X''_{\triangleleft}} \leq 2,$ we know that $\Abs{X''_{\triangleleft}} \geq k,$ and by $\MatPor{\CutG{B}{X'_{\triangleleft}}} = k,$ we obtain $\Abs{\Compl{X''_{\triangleleft}}} \geq k$ as well.
		By \cref{lem:small_porosity_implies_separator}, this implies that $\CutG{B}{X''_{\triangleleft}}$ contains a matching $F$ of size $k.$
		As $B$ is $k$-extendable there is a perfect matching $M_F$ of $B$ containing $F.$
		This yields a contradiction to $\MatPor{\CutG{B}{X''_{\triangleleft}}} \leq k-1.$
		Thus, $\Abs{X'_{\triangleleft}} = k.$
		
		We define $e_{\triangleleft} \coloneqq e_{j+1}$ and additionally $X_{\triangleleft}$ to be the shore of $\CutG{B}{e_{\triangleleft}}$ containing $X'_{\triangleleft}.$
		By \cref{lemma:parityporosity}, we obtain $\Abs{X_{\triangleleft}} = k+2$ as desired.
		Moreover, $\MatPor{\CutG{B}{X_{\triangleleft}}} = k = \Imbalance{X'_{\triangleleft}} = \Imbalance{X_{\triangleleft}}.$
		Thus, by \cref{lem:preserve_majority}, $\Abs{\Brace{X_{\triangleleft} \setminus X'_{\triangleleft}} \cap V_1} = \Abs{\Brace{X_{\triangleleft} \setminus X'_{\triangleleft}}\cap V_2} = 1.$
		We know that $X_{\triangleleft} \subseteq V_i$ for some $i\in \Set{1,2},$ because $\Imbalance{X_{\triangleleft}} = \Abs{X_{\triangleleft}},$ so let us assume that $i=1$ without loss of generality.
		Together with $\MatPor{\CutG{B}{X_{\triangleleft}}} = k,$ this implies that the only vertex $w$ of $V_2$ in $X_{\triangleleft}$ has no neighbours in $\Compl{X_{\triangleleft}}.$
		As $B$ is $\Brace{k+1}$-connected, by \cref{thm:extendabilitytoconnectivity}, $\NeighboursG{B}{w}= X_{\triangleleft}\setminus\Set{w}.$
		Hence, $\InducedSubgraph{B}{X_{\triangleleft}}$ forms the desired star.
		
		Now, choose $j'$ maximal with $\MatPor{\CutG{B}{e_{j'}}} = k.$
		Let $T_{\triangleright}$ be the component in $T - e_{j'}$ that contains $p_{\triangleright}$ and $X'_{\triangleright} \coloneqq \DecompBijection{T_{\triangleright}}.$
		Also, we define $e_{\triangleright} \coloneqq e_{j'-1}$ and $X_{\triangleright}$ to be the shore of $\CutG{B}{e_{\triangleright}}$ disjoint from $X_{\triangleleft}.$
		This ensures that $X_{\triangleleft} \cap X_{\triangleright} = \emptyset.$
		By symmetric arguments to the above we obtain $\Abs{X'_{\triangleright}} = k$ and $\Abs{X_{\triangleright}} = k+2.$
		Additionally, $\MatPor{\CutG{B}{X_{\triangleright}}} = k = \Imbalance{X'_{\triangleright}} = \Imbalance{X_{\triangleright}}$ and $X_{\triangleleft} \subseteq V_i$ for some $i\in \Set{1,2}.$
		Due to \cref{lem:preserve_majority}, the decomposition gains a vertex from each colour class with every edge of matching porosity $k.$
		By \cref{lem:small_porosity_implies_separator}, the edges along $P$ lying between $e_{\triangleleft}$ and $e_{\triangleright}$ all have matching porosity $k.$
		Thus, $X_{\triangleright} \subseteq V_2,$ as $\Abs{V_2} = \Abs{V_1}.$
		So, $\InducedSubgraph{B}{X_{\triangleright}}$ forms the desired star with the centre vertex $b$ being from $V_1.$
		
		Finally, $X_{\triangleleft} \cap X_{\triangleright} = \emptyset$ implies that $e_{\triangleleft} = e_{\triangleright}$ if and only if $X_{\triangleleft} \cup X_{\triangleright} = \V{B},$ that is, $\Abs{\V{B}} = 2k+4,$ because $\Abs{X_{\triangleleft}}=\Abs{X_{\triangleright}}=k+2.$
	\end{claimproof}

	Using the edges found in \cref{lem:end_claws} and the subpath $P'$ of $P$ starting with $e_{\triangleleft}$ and ending with $e_{\triangleright}$ we now conclude the proof.
	
	If $\Abs{\V{B}} = 2k+4,$ then, by \cref{lem:end_claws}, $e_{\triangleleft} = e_{\triangleright}$ and these are the only edges inducing cuts of matching porosity $k,$ thus the statement holds.
	
	So, assume that $\Abs{\V{B}} \geq 2k+6$ and $e_{\triangleleft} \neq e_{\triangleright}.$
	We prove the statement for the shores of any edge $e_i$ lying between $e_{\triangleleft}$ and $e_{\triangleright}$ assuming that the shores of the adjacent edge $e_{i-1}$ fulfil the statement, that is, assuming $\Imbalance{X_{e_{i-1}}} = k$ and $\NeighboursG{B}{\Minority{X_{e_{i-1}}}} \subseteq X_{e_{i-1}}.$
	Let us assume without loss of generality that $\Minority{X_{e_{i}}} \subseteq V_1.$
	By \cref{lem:preserve_majority}, we know that $\Imbalance{X_{e_{i}}} = \Imbalance{X_{e_{i-1}}} = k$ and there is a unique vertex $a$ in $\Brace{X_{e_{i}}\setminus X_{e_{i-1}}}\cap V_1.$
	Suppose $a$ has a neighbour $b$ in $\Compl{X_{e_{i}}}.$
	Then, there is a perfect matching $M$ of $B$ containing $ab$ and $M\cap\CutG{B}{X_{e_{i}}}$ contains at least $k+2$ edges, because \cref{lem:preserve_majority} implies $\Minority{X_{e_{i-1}}} \subseteq V_1,$ a contradiction.
	Thus, $\NeighboursG{B}{\Minority{X_{e_{i}}}}\subseteq X_{e_{i}}$ and we are done.
\end{proof}

\subsection{Perfect matching decompositions of width \texorpdfstring{$2$}{2}}
\label{sec:k=2}

We start by establishing the basic structure the spine and the spine of the spine in the perfect matching decompositions of bipartite graphs with perfect matching width have.
The first statement, proofing that the spine of these decompositions is cubic, even holds for bricks as well.

\begin{lemma}
    \label{lemma:cubicwidth2trees}
	Let $G$ be a brick or brace of perfect matching width two and $\Brace{T,\DecompBijection{}}$ be an optimal perfect matching decomposition.
	Then, $\Spine{T}$ is cubic.
\end{lemma}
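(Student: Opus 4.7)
The plan is to show that $T$ has no odd edges, whence $\Spine{T}$ is cubic by \cref{cor:cubicspine}. I would argue by contradiction: suppose some edge $e \in \E{\Spine{T}}$ is odd. Let $\Cut{e} = \Cut{X}$ be the corresponding cut of $G$; by the definition of odd edges, $\Abs{X}$ is odd. Then \cref{lemma:parityporosity} tells us that $\matPor{\Cut{e}}$ is odd. Since $\Brace{T,\delta}$ has width $2$, we get $\matPor{\Cut{e}} = 1$.

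Next I would observe that $\Cut{e}$ is a non-trivial cut. Indeed, since $e$ is an edge of $\Spine{T}$, both endpoints of $e$ are non-leaves of $T$ and hence have degree $3$ in $T$. Consequently each component $T_i$ of $T-e$ is a tree in which the endpoint of $e$ has degree $2$, so $T_i$ has at least three vertices and therefore at least two leaves of $T$. Thus each shore of $\Cut{e}$ contains at least two vertices of $G$.

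The key step is to upgrade porosity $1$ to tightness. Since $G$ is matching covered, it is connected, so the non-trivial cut $\Cut{e}$ contains at least one edge $f$; moreover every edge of $G$ lies in a perfect matching, so there exists some $M \in \Perf{G}$ with $f \in M$, giving $\Abs{M \cap \Cut{e}} \ge 1$. Combined with $\matPor{\Cut{e}} = 1$, this means every perfect matching of $G$ contains exactly one edge of $\Cut{e}$, i.e.\ $\Cut{e}$ is a tight cut. But $G$ is a brick or brace, so it contains no non-trivial tight cuts, a contradiction.

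Hence $T$ contains no odd edges, and \cref{cor:cubicspine} yields that $\Spine{T}$ is cubic. The only slightly delicate part is the passage from porosity $1$ to a tight cut, which requires using both directions of the matching-covered hypothesis (connectedness gives a crossing edge; the property that every edge lies in a perfect matching gives a matching witnessing at least one crossing edge); everything else is a direct appeal to the combinatorial lemmas already established in \cref{sec:trees}.
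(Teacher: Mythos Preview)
Your approach is the same as the paper's: reduce to showing $T$ has no odd edge, then argue that an odd edge would induce a non-trivial tight cut, contradicting that $G$ is a brick or brace. The paper's proof is essentially identical but terser.

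There is, however, a small logical gap in your ``upgrade'' step. You show that \emph{some} perfect matching $M$ satisfies $\Abs{M\cap\Cut{e}}\ge 1$, and that $\matPor{\Cut{e}}=1$ gives $\Abs{M'\cap\Cut{e}}\le 1$ for all $M'$; but these two facts do not combine to give $\Abs{M'\cap\Cut{e}}=1$ for \emph{every} $M'$, which is what tightness requires. The missing ingredient is the parity observation underlying \cref{lemma:parityporosity}: for \emph{any} $M'\in\Perf{G}$, the vertices of $X$ not covered by edges of $M'\cap\Cut{e}$ are matched within $X$, so $\Abs{X}-\Abs{M'\cap\Cut{e}}$ is even; since $\Abs{X}$ is odd, $\Abs{M'\cap\Cut{e}}$ is odd and hence at least $1$ for every $M'$. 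With this in place your argument goes through, and the detour via connectedness and the matching-covered property becomes unnecessary.
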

\begin{proof}
	By \cref{cor:cubicspine}, it suffices to show that $T$ is free of odd edges.
	Suppose $T$ has an odd edge $t_1t_2$, then $X_i\coloneqq\DecompBijection{T_{t_i}}$ contains an odd number of vertices for $i\in\Set{1,2}$.
	Then \cref{lemma:parityporosity} implies that $\MatPor{\CutG{G}{X_1}}$ is odd.
	As the width of $\Brace{T,\DecompBijection{}}$ is $2$ and $t_1t_2$ is an inner edge of $T$, $\Abs{X_1}\geq 3$, $\Abs{X_2} \geq 3$ and $\MatPor{\CutG{G}{X_1}}=1$.
	Thus $\CutG{G}{X_1}$ must be a non-trivial tight cut of $G$ contradicting $G$ being a brick or a brace.
\end{proof}

Using our insights on imbalance \cref{lemma:imbalance}, we can now prove that there are no degree-$3$-vertices in the spine of the spine of a width-$2$-decomposition of a brace.
This means that any optimal perfect matching decomposition of a brace $B$ with $\pmw{B}=2$ has a linear structure.

\begin{proposition}
    \label{prop:linearwidth2trees}
	Let $B$ be a brace of perfect matching width two and $\Brace{T,\DecompBijection{}}$ a perfect matching decomposition of minimum width for $B$.
	Then, $\Spine{\Spine{T}}$ is a path.
\end{proposition}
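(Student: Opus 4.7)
The plan is to prove that every vertex of $\Spine{\Spine{T}}$ has degree at most $2$; since $\Spine{T}$ is cubic by \cref{lemma:cubicwidth2trees} and $\Spine{\Spine{T}}$ is obtained from it by removing leaves, it is a subcubic tree, so a degree bound of $2$ forces it to be a path. Suppose towards a contradiction that some $t \in \V{\Spine{\Spine{T}}}$ has degree $3$ there. Then $t$ has degree $3$ already in $\Spine{T}$, and hence in $T$, and each of its three neighbours $u_1, u_2, u_3$ in $T$ is a non-leaf of $T$ that is moreover a non-leaf of $\Spine{T}$ (otherwise $u_i$ would not survive into $\Spine{\Spine{T}}$). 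Write $T_1, T_2, T_3$ for the three components of $T - t$, indexed so that $u_i \in \V{T_i}$, and set $X_i \coloneqq \Fkt{\delta}{T_i}$.

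By \cref{lemma:cubicwidth2trees} the tree $T$ has no odd edges, so each $|X_i|$ is even, and \cref{lemma:parityporosity} then gives that $\matPor{\Cut{X_i}}$ is even as well. Because $G$ is matching covered, every edge of $\Cut{X_i}$ lies in some perfect matching, so $\matPor{\Cut{X_i}} \ge 1$; combined with $\Width{T,\delta} = 2$ this yields $\matPor{\Cut{X_i}} = 2$.

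The crucial step is the size bound $|X_i| \ge 4$. Inside $T_i$ the vertex $u_i$ has degree $2$, every leaf of $T$ in $T_i$ has degree $1$, and every other vertex has degree $3$; a handshake count then forces $n_1 = n_3 + 2$, where $n_1 = |X_i|$ and $n_3$ counts the degree-$3$ vertices of $T_i$. So $|X_i| = 2$ would force $n_3 = 0$, meaning $T_i$ is a three-vertex path $\ell - u_i - \ell'$, and then both non-$t$ neighbours of $u_i$ in $T$ are leaves of $T$. But this makes $u_i$ a leaf of $\Spine{T}$, contradicting the choice of $u_i$. Hence $|X_i| \ge 4$ for each $i$, and consequently $|\V{G}| - |X_i| = |X_j| + |X_k| \ge 8$ for the other two indices, so every $X_i$ satisfies the hypotheses of \cref{lemma:imbalance} and $\Balance{X_i} = 2$.

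Writing the bipartition of $G$ as $A \cup B$ (with $|A| = |B|$ since $G$ has a perfect matching), each difference $|X_i \cap A| - |X_i \cap B|$ lies in $\Set{-2, +2}$, whereas summing over $i \in \Set{1,2,3}$ must yield $|A| - |B| = 0$. A sum of three numbers from $\Set{-2,+2}$ is never zero, which is the desired contradiction. The main obstacle in this plan is the lower bound $|X_i| \ge 4$; once it is secured, \cref{lemma:imbalance} together with a parity count on the colour excesses closes the argument.
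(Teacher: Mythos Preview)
Your proof is correct and follows essentially the same approach as the paper: assume a degree-$3$ vertex $t$ in $\Spine{\Spine{T}}$, use \cref{lemma:cubicwidth2trees} to get that each of the three subtrees hanging off $t$ contributes a shore $X_i$ with $|X_i|\ge 4$ and $\matPor{\Cut{X_i}}=2$, invoke \cref{lemma:imbalance} to obtain $\Balance{X_i}=2$, and finish with the parity contradiction that three values from $\{-2,+2\}$ cannot sum to $|A|-|B|=0$. The only cosmetic difference is that the paper argues $|X_i|\ge 4$ by explicitly tracing two levels of neighbours of $t_i$ in the cubic tree $\Spine{T}$, whereas you reach the same bound via a handshake count $n_1=n_3+2$ on the subtree $T_i$; both are equivalent realisations of the same structural fact.
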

\begin{proof}
	Suppose there is a vertex $t\in\V{\Spine{\Spine{T}}}$ with three neighbours $t_1$, $t_2$ and $t_3$.
	By \cref{lemma:cubicwidth2trees}, $\Spine{T}$ is cubic and so every $t_i$ is adjacent to exactly two vertices of the spine of $T$ apart from $t$.
	Moreover, each of these neighbours again has exactly two neighbours distinct from $t_i$ in $T$.
	Let $T_i$ be the component of $T-tt_i$ for $i\in\Set{1,2,3}$ that does not contain $t$ and let $X_i\coloneqq\DecompBijection{T_i}$.
	The above observations imply $\Abs{X_i}\geq 4$ for all $i\in \Set{1,2,3}$.
	As $T$ is free of odd edges by \cref{cor:cubicspine,lemma:cubicwidth2trees}, $\MatPor{X_i}=2$ and so \cref{lemma:imbalance} yields $\Imbalance{X_i}=2$.
	
	Without loss of generality we can assume that two of the three sets have an excess in $V_1$ while the last one, say $X_3$, has an excess in $V_2$.
	This holds as the case where the excesses of all three sets are of the same colour implies $\Imbalance{\V{B}}=6$, a direct contradiction to the existence of a perfect matching in $B$.
	However, even under this assumption we still obtain $\Abs{V_1}=\Abs{V_2}+2$ and thus, $\V{G}$ is not balanced.
	Since $B$ has a perfect matching, this is impossible and thus, $\Spine{\Spine{T}}$ cannot have a vertex of degree three.
\end{proof}

This allows us to apply the findings from \cref{sec:general_k}, especially the insights obtained in the proof of \cref{prop:minority_closed_shores} to braces of perfect matching width two, we obtain the structure illustrated in \cref{fig:brace-width-2-decomposition}.

\begin{figure}[!ht]
	\centering
	\begin{tikzpicture}[scale=0.8]
		\def\innerLength{1.2}
		\def\pathLength{2}
		\def\leafLength{0.9}
		\def\leafAngle{46}
		\node (t1) at (0,0) {};
		\node (t2) at ($(t1)+(\pathLength,0)$) {};
		\node (t3) at ($(t2)+(\pathLength,0)$) {};
		\node (t4) at ($(t3)+(\pathLength,0)$) {};
		\node (t5) at ($(t4)+(\pathLength,0)$) {};
		
		\draw (t1.center) edge[edge] (t2.center);
		\draw (t2.center) edge[edge] (t3.center);
		\draw (t3.center) edge[dashed,shorten >= 5pt, shorten <= 5pt] (t4.center);
		\draw (t4.center) edge[edge] (t5.center);
		
		\node (la) at ($(t1)+(360/3:\innerLength)$) {};
		\node[vertexB] (la-1) at ($(la)+(360/3-\leafAngle:\leafLength)$) {};
		\node[vertexB] (la-2) at ($(la)+(360/3+\leafAngle:\leafLength)$) {};
		\node (lb) at ($(t1)+(2*360/3:\innerLength)$) {};
		\node[vertexB] (lb-w) at ($(lb)+(2*360/3-\leafAngle:\leafLength)$) {};
		\node[vertexW] (lb-b) at ($(lb)+(2*360/3+\leafAngle:\leafLength)$) {};
		
		\draw (t1.center) edge[edge] (la.center);
		\draw (t1.center) edge[edge] (lb.center);
		\draw (la.center) edge[edge] (la-1);
		\draw (la.center) edge[edge] (la-2);
		\draw (lb.center) edge[edge] (lb-w);
		\draw (lb.center) edge[edge] (lb-b);
		
		\node (a2) at ($(t2)+(90:\innerLength)$) {};
		\node[vertexW] (a2-w) at ($(a2)+(90+\leafAngle:\leafLength)$) {};
		\node[vertexB] (a2-b) at ($(a2)+(90-\leafAngle:\leafLength)$) {};
		
		\draw (a2.center) edge[edge] (a2-w);
		\draw (a2.center) edge[edge] (a2-b);
		\draw (t2.center) edge[edge] (a2.center);
		
		\node (b3) at ($(t3)+(270:\innerLength)$) {};
		\node[vertexW] (b3-w) at ($(b3)+(270-\leafAngle:\leafLength)$) {};
		\node[vertexB] (b3-b) at ($(b3)+(270+\leafAngle:\leafLength)$) {};
		
		\draw (b3.center) edge[edge] (b3-w);
		\draw (b3.center) edge[edge] (b3-b);
		\draw (t3.center) edge[edge] (b3.center);
		
		\node (a4) at ($(t4)+(90:\innerLength)$) {};
		\node[vertexW] (a4-w) at ($(a4)+(90+\leafAngle:\leafLength)$) {};
		\node[vertexB] (a4-b) at ($(a4)+(90-\leafAngle:\leafLength)$) {};
		
		\draw (a4.center) edge[edge] (a4-w);
		\draw (a4.center) edge[edge] (a4-b);
		\draw (t4.center) edge[edge] (a4.center);
		
		\node (ra) at ($(t5)+(2*360/3-180:\innerLength)$) {};
		\node[vertexW] (ra-1) at ($(ra)+(2*360/3-180-\leafAngle:\leafLength)$) {};
		\node[vertexW] (ra-2) at ($(ra)+(2*360/3-180+\leafAngle:\leafLength)$) {};
		\node (rb) at ($(t5)+(360/3-180:\innerLength)$) {};
		\node[vertexB] (rb-w) at ($(rb)+(360/3-180-\leafAngle:\leafLength)$) {};
		\node[vertexW] (rb-b) at ($(rb)+(360/3-180+\leafAngle:\leafLength)$) {};
		
		\draw (t5.center) edge[edge] (ra.center);
		\draw (t5.center) edge[edge] (rb.center);
		\draw (ra.center) edge[edge] (ra-1);
		\draw (ra.center) edge[edge] (ra-2);
		\draw (rb.center) edge[edge] (rb-w);
		\draw (rb.center) edge[edge] (rb-b);
	\end{tikzpicture}
	\caption{The linear structure of a perfect matching decomposition of width $2$ with a claw on each side and two vertices from different colour classes added in each step.
	The filled vertices in the figure represent the leaves mapped to a vertex in $V_1$ and the empty vertices represent the leaves mapped to vertices in $V_2.$}
	\label{fig:brace-width-2-decomposition}
\end{figure}

\begin{corollary}
    \label{lemma:decomp_2_structure}
    Let $\Brace{T,\DecompBijection{}}$ be a perfect matching decomposition of width $2$ of a brace $B$ with $2n$ vertices, then
    \begin{itemize}
        \item $\Spine{\Spine{T}}$ is a path on $n-2$ vertices $t_1,\dots,t_{n-2}$
        \item both $\CutG{B}{t_1t_2}$ and $\CutG{B}{t_{n-1}t_{n-2}}$ have a shore that is a claw with centre vertices of different colour, and
        \item let $X_i\coloneqq\DecompBijection{T_i}$ where $T_i$ is the component of $T-t_it_{i+1}$ that contains $t_1$ for all $i\in\Set{1,\dots,n-3}$, then $X_i\setminus X_{i-1}$ contains exactly two vertices of different colour.
    \end{itemize}
\end{corollary}

Of specific interest to us is the directed corollary of \cref{prop:minority_closed_shores} for the case $k=2$.

\begin{corollary}\label{cor:eclosedminorities}
	Let $B$ be a brace of perfect matching width two, $\Brace{T,\DecompBijection{}}$ be an optimal perfect matching decomposition of $G$, $e\in\Fkt{E}{\Spine{\Spine{T}}}$ and $X$ a shore of $\CutG{B}{e}$.
	Then no vertex of the minority of $X$ has a neighbour in $\Compl{X}$.
\end{corollary}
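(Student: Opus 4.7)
The plan is to derive this corollary as a routine application of Lemma~\ref{lemma:minorities} to the shore $X$, once we verify its two hypotheses: $\matPor{\Cut{X}} = 2$ and $4 \leq \Abs{X} \leq \Abs{\V{G}} - 4$. Both should follow quickly from the fact that $e$ lies in the twice-iterated spine of an optimal width-$2$ decomposition, using tools already established in this section. Note also that the statement is vacuous when $\Spine{\Spine{T}}$ has no edges, so we may implicitly assume that $\Abs{\V{G}}$ is large enough for this to be non-trivial.

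For the size bound, write $e = t_1 t_2$. Since $e \in \E{\Spine{\Spine{T}}}$, neither $t_1$ nor $t_2$ is a leaf of $\Spine{T}$, and Lemma~\ref{lemma:cubicwidth2trees} tells us that $\Spine{T}$ is cubic, so each $t_i$ has spine-degree exactly $3$. Hence on the $t_i$-side of $e$ there are two further spine-neighbours of $t_i$. I would then argue that each such neighbour $c$ contributes at least two leaves of $T$ to that side: if $c$ is a leaf of $\Spine{T}$ then its two non-spine neighbours in $T$ are themselves leaves of $T$, and if $c$ is internal to $\Spine{T}$ then iterating the same argument yields even more $T$-leaves in the subtree rooted at $c$. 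Summing the contribution of the two neighbours of $t_i$ gives $\Abs{X} \geq 4$, and the symmetric argument on the $t_2$-side gives $\Abs{\V{G}} - \Abs{X} \geq 4$.

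For the porosity, optimality of $\Brace{T,\delta}$ gives $\matPor{\Cut{X}} \leq 2$, while matching coverage of $G$ together with the non-emptiness of $\Cut{X}$ (which holds since $G$ is connected and both shores are non-empty) forces $\matPor{\Cut{X}} \geq 1$. If the porosity were exactly $1$, then $\Cut{X}$ would be a tight cut, and with both shores of size at least $4$ it would be non-trivial, contradicting the brace property of $G$. Hence $\matPor{\Cut{X}} = 2$. Both hypotheses of Lemma~\ref{lemma:minorities} are then met, and its conclusion transfers verbatim to give the corollary. I do not expect a genuine obstacle here; the statement is a packaging of Lemma~\ref{lemma:minorities} for the subclass of cuts arising in the second-iterated spine, and the only real work is the tree-combinatorial verification of the shore-size bound together with the short brace-based exclusion of porosity~$1$.
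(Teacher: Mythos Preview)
Your proposal is correct and matches the paper's intended approach: the corollary is stated in the paper without proof, as an immediate consequence of Lemma~\ref{lemma:minorities}, and your verification of the two hypotheses (the size bound via Lemma~\ref{lemma:cubicwidth2trees} and the porosity being exactly~$2$) is precisely the routine check one has to supply. The only cosmetic difference is that where you exclude $\matPor{\Cut{X}}=1$ via the brace property, the paper's surrounding arguments (e.g.\ in the proof of Proposition~\ref{prop:linearwidth2trees}) would instead note that $T$ has no odd edges by Lemma~\ref{lemma:cubicwidth2trees} and Corollary~\ref{cor:cubicspine}, so $\Abs{X}$ is even and hence the porosity is even by Lemma~\ref{lemma:parityporosity}; both routes are equally short and yield the same conclusion.
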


\subsection{Elimination Orderings}

So, given a perfect matching decomposition $\Brace{T,\DecompBijection{}}$ of width two for a brace $B$ we know that $\Spine{\Spine{T}}$ is a path and each of its endpoints can be identified with a claw in $B$.
Moreover, if the central vertex of such a claw is a vertex of $V_1$, then $\Spine{\Spine{T}}$ induces a linear ordering of $V_1$ which is uniquely determined by $\Brace{T,\DecompBijection{}}$ except for the order of the last three vertices.
Let $a\in V_1$ be any vertex in $V_1$ and $X_{a}\subseteq V_1$ be the set of vertices smaller or equal to $a$ in the ordering induced by $\Brace{T,\DecompBijection{}}$, then \cref{cor:eclosedminorities} together with \cref{lemma:imbalance} implies $\Abs{X_a}+2=\Abs{\NeighboursG{B}{X_a}}$.
Inspired by this observation, we present a definition for elimination orderings in bipartite matching covered graphs.

\begin{definition}[Matching Elimination Width]
	Let $B$ be a bipartite matching covered graph and  $\Fkt{\Lambda}{V_i}$ be the set of all linear orderings of $V_i$ for $i\in\Set{1,2}$.
	Let $\lambda\in\Fkt{\Lambda}{V_i}$.
	For every $v\in V_i$ we define the set of \emph{reachable} vertices in $V_{3-i}$ as
	\begin{align*}
		\Reach{B}{\lambda}{v}&\coloneqq\NeighboursG{B}{\Prec{B}{\lambda}{v}}\text{, where}\\
		\Prec{B}{\lambda}{v}&\coloneqq\CondSet{v'\in V_i}{\Fkt{\lambda}{v'}\leq\Fkt{\lambda}{v}}.
	\end{align*}
	We also call these the \emph{reachability-set} and the \emph{predecessor-set} respectively.
	The \emph{width} of such an ordering is given by
	\begin{equation*}
		\Width{\lambda}\coloneqq\max_{v\in V_i} \Brace{\Abs{\Reach{B}{\lambda}{v}}-\Abs{\Prec{B}{\lambda}{v}}}.
	\end{equation*}
	Now the \emph{matching elimination width} of $B$ (with respect to $V_i$) is defined as
	\begin{equation*}
		\MEOW{i}{B}\coloneqq \min_{\lambda\in\Lambda\Brace{V_i}}\Width{\lambda}.
	\end{equation*}
\end{definition}

Please note that by \cref{thm:bipartiteextendability} $\Abs{\Reach{B}{\lambda}{v}}-\Abs{\Prec{B}{\lambda}{v}}\geq 0$ for all $\lambda\in\Fkt{\Lambda}{V_i}$ and all $v\in V_i$.
Moreover, if $v$ is not the largest vertex of $\lambda$, then $\Abs{\Reach{B}{\lambda}{v}}-\Abs{\Prec{B}{\lambda}{v}}\geq 1$ as $B$ is matching covered.
Also note that, in case $\lambda$ is an ordering whose width with respect to $V_1$ is some value $k$, then the ordering $\lambda'$ obtained by ordering the vertices of $V_2$ according to their appearance as neighbours of the vertices of $V_1$, and then reversing this order, is of the same width as $\lambda$, but now with respect to $V_2$.
Hence the choice of $i\in\Set{1,2}$ does not influence the width.

What follows is a characterisation of braces of perfect matching width two in terms of their matching elimination width.
To be more precise, we show that an ordering of the vertices in $V_1$ of width two can be used to construct a perfect matching decomposition $\Brace{T,\DecompBijection{}}$ of width two such that $\Spine{\Spine{T}}$ is a path.
Also, any linear ordering of $V_1$ obtained from such a path in a perfect matching decomposition $\Brace{T,\DecompBijection{}}$ of width two provides an ordering of $V_1$ of width two.

\begin{theorem}\label{thm:width2elimination}
	Let $B$ be a brace on at least $6$ vertices.
	Then $\pmw{B}=2$ if and only if $\MEOW{1}{B}=2$.
\end{theorem}

\begin{proof}
	First, let $\Brace{T,\DecompBijection{}}$ be a perfect matching decomposition for $B$ of width two.
	Then, by \cref{lemma:cubicwidth2trees}, $\Spine{T}$ is cubic and by \cref{prop:linearwidth2trees}, $\Spine{\Spine{T}}$ is a path.
	Let $n \coloneqq \Abs{V_1}$, then $\Abs{\V{B}}=2n$ and $T$ has $2n$ leaves.
	So by \cref{lemma:cubictrees}, $\Spine{T}$ has $2n-2$ vertices and as $\Spine{T}$ has a leaf for every two vertices of $B$, $\Abs{\Leaves{\Spine{T}}}=n$.
	
	Thus, $\Spine{\Spine{T}}$ has $n-2$ vertices, let $t_1,\dots,t_{n-2}$ be its vertices ordered by occurrence and $t_1$ being the endpoint that, by \cref{lemma:decomp_2_structure}, corresponds to a claw in $B$ whose central vertex is $v_1 \in V_1$.
	We define a bijective function $\lambda^{-1}\colon\Set{1,\dots,n}\rightarrow V_1$ whose inverse provides the desired ordering.
	We set $\Fkt{\lambda^{-1}}{1}\coloneqq v_1$.
	
	For each $i\in\Set{1,\dots,n-3}$ let $X_i\coloneqq\DecompBijection{T_i}$ where $T_i$ is the component of $T-t_it_{i+1}$ that contains $t_1$.
	By our definition of $v_1$ and $t_1$, $X_1\cap V_1=\Set{v_1}$.
	Now, consider $i\in\Set{2,\dots,n-3}$.
	Clearly $X_j\subseteq X_i$ for all $j<i$ and by \cref{lemma:decomp_2_structure}, $X_i\setminus X_{i-1}$ contains exactly two vertices, one being $u_i \in V_2$ and the other one being $v_i \in V_1$.
	Set $\Fkt{\lambda^{-1}}{i}\coloneqq v_i$.
	At last let $\Set{v_{n-2},v_{n-1},v_n}=\Compl{X_{n-3}}\cap V_1$ where the order of these three vertices is chosen arbitrarily and set $\Fkt{\lambda^{-1}}{j}\coloneqq v_j$ for all $j\in\Set{n-2,n-1,n}$.
	
	Now, $\lambda=\Brace{\lambda^{-1}}^{-1}$ is a linear ordering of $V_1$.
	Note that $\MEOW{1}{G}\geq 2$ due to \cref{thm:bipartiteextendability}.
	Hence it is only left to show that $\Width{\lambda}=2$.
	
	Let $v\in V_1$ be chosen arbitrarily.
	If $v\in\Set{v_{n-2},v_{n-1},v_n}$ we have nothing to show, so suppose $v=v_i$ for some $i\in\Set{1,\dots,n-3}$.
	Then, $\Reach{B}{\lambda}{v}=X_i\cap V_2$ and $\Prec{B}{\lambda}{v}=X_i\cap V_1=\Set{v_1,\dots,v_i}$.
	\Cref{lemma:imbalance} yields $\Imbalance{X_i}=2$ and as $\Set{v_1}$ is the minority of $X_1$, we obtain that $V_1$ contains the minority of $X_i$ from \cref{lemma:decomp_2_structure}.
	Therefore, $\Abs{\Reach{B}{\lambda}{v}-\Prec{B}{\lambda}{v}}=2$.
	As $i$ was chosen arbitrarily, $\Width{\lambda}=2$ and thus $\MEOW{1}{G}=2$.
	
	Second, for the reverse direction, let $\lambda$ be a linear ordering of $V_1$ of width two, and let $n\coloneqq\Abs{V_1}$.
	Since $B$ is a brace, $\Abs{\Reach{B}{\lambda}{v}}-\Abs{\Prec{B}{\lambda}{v}}\geq 2$ for all $v\in V_1$ with $\Fkt{\lambda}{v} \leq n-2$.
	Let $X_1\coloneqq\Set{v_1}\cup \NeighboursG{B}{v_1}$ and for all $i\in\Set{1,\dots,n-3}$ let $X_i\coloneqq X_{i-1}\cup\Set{v_i}\cup\NeighboursG{B}{v_i}$ and then let $X_{n-2}\coloneqq X_{n-3}\cup\Set{v_{n-2},v_{n-1},v_n}\cup\NeighboursG{B}{\Set{v_{n-2},v_{n-1},v_n}}$.
	We claim that $\MatPor{\CutG{B}{X_i}}=2$ for all $i\in\Set{1,\dots,n-2}$ and $\Abs{X_j}-\Abs{X_{j-1}}=2$ for all $j\in\Set{2,\dots,n-2}$ as well as $\Abs{X_1}=\Abs{X_{n-2}\setminus X_{n-3}}=4$.
	
	By construction, for all $i\in\Set{1,\dots,n-2}$, $\NeighboursG{B}{V_1\cap X_i}\subseteq X_i$ and thus $\MatPor{\CutG{B}{X_i}}=\Abs{X_i}-2\Abs{V_1\cap X_i}=\Abs{V_2\cap X_i}-\Abs{V_1\cap X_i}=2$, where the last equality follows from the width of $\lambda$.
	Now, consider $j\in\Set{1,\dots,n-3}$.
	By definition, $\Abs{X_j\cap V_1}-\Abs{X_{j-1}\cap V_1}=1$ and, as we have seen above, $\Abs{V_2\cap X_j}-\Abs{V_1\cap X_j}=\Abs{V_2\cap X_{j-1}}-\Abs{V_1\cap X_{j-1}}$ hence, $\Abs{X_j\cap V_2}-\Abs{X_{j-1}\cap V_2}=1$ as well.
	At last, clearly $\Abs{X_1}=4$ by definition and the width of $\lambda$.
	Moreover $\Abs{V_2\cap X_j}-\Abs{V_1\cap X_j}=2$ and thus $\Abs{X_{n-3}\cap V_2}-\Abs{X_{n-3}\cap V_1}=2$ implying $\Abs{X_{n-3}\cap V_2}=n-1$, so $\Abs{\Compl{X_{n-2}}}=4$.
	
	We now use the $X_i$ to construct a perfect matching decomposition of width two for $B$.
	The idea is simple, we introduce a path on $n-2$ vertices $t_1,\dots,t_{n-2}$ and identify $X_i$ with $t_i$ for all $i$.
	We construct a tree $T$ by first, introducing two new leaf neighbours for $t_1$ and $t_{n-2}$ and one new leaf neighbour for each $t_j$ with $j\in\Set{2,\dots,n-3}$ and second, introducing two leaf neighbours again for every leaf added in the first step.
	This results in the two endpoints of our original path being identified with four new leaves each, while every internal vertex of the path is identified with two leaves of the new tree $T$.
	We start creating $\DecompBijection{}$ by mapping the four leaves identified with $t_1$ to the vertices of $X_1$ and the four leaves identified with $t_{n-2}$ to the vertices of $\Compl{X_{n-3}}$.
	By our observations above, for each $j\in\Set{2,\dots,n-3}$, $\Abs{X_j}-\Abs{X_{j-1}}=2$ and so for each such $j$ we can map the two leaves of $T$ identified with $t_j$ to the two vertices in $X_j\setminus X_{j-1}$.
	The result is a perfect matching decomposition $\Brace{T,\DecompBijection{}}$ of $B$ and, since $\MatPor{\CutG{B}{X_i}}=2$ for all $i\in\Set{1,\dots,n-2}$, it is of width two.
	This completes our proof.
\end{proof}

\subsection{Edge-Maximal Braces of Perfect Matching Width Two}

Let $B$ be a brace of perfect matching width two and $\lambda$ a linear ordering of $V_1$ such that $\Width{\lambda}=2$.
Suppose for some $v\in V_1$ there is a $u\in\Reach{B}{\lambda}{v}$ with $uv\notin\E{B}$, then $\lambda$ is also a width-$2$-ordering of $B+uv$.
Using this observation, we can add edges to our brace until we reach a brace $B'$ such that $\MEOW{1}{B'+uv}>\MEOW{1}{B'}=2$ for every edge $uv$ with $v\in V_1$, $u\in V_2$ and $uv\notin\E{B'}$.

By following this idea of constructing an edge-maximal brace of perfect matching width two we obtain a special kind of bipartite graphs.
We call a brace $L_n=B$ a \emph{bipartite ladder} of \emph{order $n$} if $V_1=\Set{v_1,\dots,v_n}$, $V_2=\Set{u_1,\dots,u_n}$ and $\E{B}=E_1\cup E_2\cup E_3$ where
\begin{enumerate}
	
	\item $E_1\coloneqq\CondSet{v_iu_j}{\text{for all}~1\leq j\leq i\leq n}$,
	
	\item $E_2\coloneqq\CondSet{v_iu_{i+1}}{\text{for all}~1\leq i\leq n-1}$, and
	
	\item $E_3\coloneqq\CondSet{v_iu_{i+2}}{\text{for all}~1\leq i\leq n-2}$.
	
\end{enumerate}
The graphs $L_1$, which is a single edge, and $L_2$, which is isomorphic to $C_4$, are not too interesting as they are very small.
For $n\geq 3$ however these graphs grow more complex, see \cref{fig:ladders} for an illustration of $L_3$, $L_4$ and $L_5$.

\begin{figure}[!ht]
	\begin{center}
		\begin{tikzpicture}[scale=0.9]
			
			\pgfdeclarelayer{background}
			\pgfdeclarelayer{foreground}
			
			\pgfsetlayers{background,main,foreground}
			
			
			\begin{pgfonlayer}{main}
				
				\node (C) [] {};
				
				\node (C1) [v:ghost, position=90:33mm from C] {};
				
				\node (C2) [v:ghost, position=0:0mm from C] {};
				
				\node (C3) [v:ghost, position=270:33mm from C] {};

				
				
				\node(C1b1) [v:mainempty,position=0:0mm from C1] {};
				\node(C1b2) [v:mainempty,position=0:15mm from C1b1] {};
				\node(C1b3) [v:mainempty,position=0:15mm from C1b2] {};
				
				\node(C1a1) [v:main,position=270:17mm from C1b1] {};
				\node(C1a2) [v:main,position=0:15mm from C1a1] {};
				\node(C1a3) [v:main,position=0:15mm from C1a2] {};

				\node (L1) [v:ghost,position=180:15mm from C1b1] {$L_3=K_{3,3}$};
				
				\node (L1b1) [v:ghost,position=90:4mm from C1b1] {$u_1$};
				\node (L1b2) [v:ghost,position=90:4mm from C1b2] {$u_2$};
				\node (L1b3) [v:ghost,position=90:4mm from C1b3] {$u_3$};
				
				\node (L1a1) [v:ghost,position=270:4mm from C1a1] {$v_1$};
				\node (L1a2) [v:ghost,position=270:4mm from C1a2] {$v_2$};
				\node (L1a3) [v:ghost,position=270:4mm from C1a3] {$v_3$};
				
				
				
				\node(C2b1) [v:mainempty,position=0:0mm from C2] {};
				\node(C2b2) [v:mainempty,position=0:15mm from C2b1] {};
				\node(C2b3) [v:mainempty,position=0:15mm from C2b2] {};
				\node(C2b4) [v:mainempty,position=0:15mm from C2b3] {};
				
				\node(C2a1) [v:main,position=270:17mm from C2b1] {};
				\node(C2a2) [v:main,position=0:15mm from C2a1] {};
				\node(C2a3) [v:main,position=0:15mm from C2a2] {};
				\node(C2a4) [v:main,position=0:15mm from C2a3] {};

				\node (L2) [v:ghost,position=180:10mm from C2b1] {$L_4$};
				
				\node (L2b1) [v:ghost,position=90:4mm from C2b1] {$u_1$};
				\node (L2b2) [v:ghost,position=90:4mm from C2b2] {$u_2$};
				\node (L2b3) [v:ghost,position=90:4mm from C2b3] {$u_3$};
				\node (L2b4) [v:ghost,position=90:4mm from C2b4] {$u_4$};
				
				\node (L2a1) [v:ghost,position=270:4mm from C2a1] {$v_1$};
				\node (L2a2) [v:ghost,position=270:4mm from C2a2] {$v_2$};
				\node (L2a3) [v:ghost,position=270:4mm from C2a3] {$v_3$};
				\node (L2a4) [v:ghost,position=270:4mm from C2a4] {$v_4$};
				
				
				
				\node(C3b1) [v:mainempty,position=0:0mm from C3] {};
				\node(C3b2) [v:mainempty,position=0:15mm from C3b1] {};
				\node(C3b3) [v:mainempty,position=0:15mm from C3b2] {};
				\node(C3b4) [v:mainempty,position=0:15mm from C3b3] {};
				\node(C3b5) [v:mainempty,position=0:15mm from C3b4] {};
				
				\node(C3a1) [v:main,position=270:17mm from C3b1] {};
				\node(C3a2) [v:main,position=0:15mm from C3a1] {};
				\node(C3a3) [v:main,position=0:15mm from C3a2] {};
				\node(C3a4) [v:main,position=0:15mm from C3a3] {};
				\node(C3a5) [v:main,position=0:15mm from C3a4] {};

				\node (L3) [v:ghost,position=180:10mm from C3b1] {$L_5$};
				
				\node (L3b1) [v:ghost,position=90:4mm from C3b1] {$u_1$};
				\node (L3b2) [v:ghost,position=90:4mm from C3b2] {$u_2$};
				\node (L3b3) [v:ghost,position=90:4mm from C3b3] {$u_3$};
				\node (L3b4) [v:ghost,position=90:4mm from C3b4] {$u_4$};
				\node (L3b5) [v:ghost,position=90:4mm from C3b5] {$u_5$};
				
				\node (L3a1) [v:ghost,position=270:4mm from C3a1] {$v_1$};
				\node (L3a2) [v:ghost,position=270:4mm from C3a2] {$v_2$};
				\node (L3a3) [v:ghost,position=270:4mm from C3a3] {$v_3$};
				\node (L3a4) [v:ghost,position=270:4mm from C3a4] {$v_4$};
				\node (L3a5) [v:ghost,position=270:4mm from C3a5] {$v_5$};
				
				

				

				

				

				
				
			\end{pgfonlayer}
			
			
			\begin{pgfonlayer}{background}
				
				\draw (C1b1) [e:main] to (C1a1);
				\draw (C1b1) [e:main] to (C1a2);
				\draw (C1b1) [e:main] to (C1a3);
				
				\draw (C1b2) [e:main] to (C1a2);
				\draw (C1b2) [e:main] to (C1a3);
				
				\draw (C1b3) [e:main] to (C1a3);
				
				\draw (C1b2) [e:main,color=DarkGray] to (C1a1);
				\draw (C1b3) [e:main,color=Gray] to (C1a1);
				\draw (C1b3) [e:main,color=DarkGray] to (C1a2);

				\draw (C2b1) [e:main] to (C2a1);
				\draw (C2b1) [e:main] to (C2a2);
				\draw (C2b1) [e:main] to (C2a3);
				\draw (C2b1) [e:main] to (C2a4);
				
				\draw (C2b2) [e:main] to (C2a2);
				\draw (C2b2) [e:main] to (C2a3);
				\draw (C2b2) [e:main] to (C2a4);
				
				\draw (C2b3) [e:main] to (C2a3);
				\draw (C2b3) [e:main] to (C2a4);
				
				\draw (C2b4) [e:main] to (C2a4);
				
				\draw (C2b2) [e:main,color=DarkGray] to (C2a1);
				\draw (C2b3) [e:main,color=Gray] to (C2a1);
				\draw (C2b3) [e:main,color=DarkGray] to (C2a2);
				\draw (C2b4) [e:main,color=Gray] to (C2a2);
				\draw (C2b4) [e:main,color=DarkGray] to (C2a3);

				\draw (C3b1) [e:main] to (C3a1);
				\draw (C3b1) [e:main] to (C3a2);
				\draw (C3b1) [e:main] to (C3a3);
				\draw (C3b1) [e:main] to (C3a4);
				\draw (C3b1) [e:main] to (C3a5);
				
				\draw (C3b2) [e:main] to (C3a2);
				\draw (C3b2) [e:main] to (C3a3);
				\draw (C3b2) [e:main] to (C3a4);
				\draw (C3b2) [e:main] to (C3a5);
				
				\draw (C3b3) [e:main] to (C3a3);
				\draw (C3b3) [e:main] to (C3a4);
				\draw (C3b3) [e:main] to (C3a5);
				
				\draw (C3b4) [e:main] to (C3a4);
				\draw (C3b4) [e:main] to (C3a5);
				
				\draw (C3b5) [e:main] to (C3a5);
				
				\draw (C3b2) [e:main,color=DarkGray] to (C3a1);
				\draw (C3b3) [e:main,color=Gray] to (C3a1);
				\draw (C3b3) [e:main,color=DarkGray] to (C3a2);
				\draw (C3b4) [e:main,color=Gray] to (C3a2);
				\draw (C3b4) [e:main,color=DarkGray] to (C3a3);
				\draw (C3b5) [e:main,color=Gray] to (C3a3);
				\draw (C3b5) [e:main,color=DarkGray] to (C3a4);

			\end{pgfonlayer}	
			
			\begin{pgfonlayer}{foreground}

			\end{pgfonlayer}
		\end{tikzpicture}
	\end{center}
	\caption{The bipartite ladders of order $3$, $4$, and $5$.
    Edges of $E_1$ are black, the ones from $E_2$ are dark grey and the edges from $E_3$ are light grey.}
	\label{fig:ladders}
\end{figure}

The following corollary is a consequence of \cref{thm:bipartiteextendability}.

\begin{corollary}\label{cor:addingedges}
	Let $B$ be a brace and $v_1\in V_1$, $v_2\in V_2$ such that $v_1v_2\notin \E{B}$.
	Then $B+v_1v_2$ is a brace.
\end{corollary}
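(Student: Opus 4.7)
The plan is to derive the corollary directly from the characterization of bipartite $k$-extendable graphs stated immediately before it. Recall that, by the Lov{\'a}sz theorem quoted earlier in this section, a bipartite graph is a brace if and only if it is $2$-extendable. So it suffices to verify that $G + ab$ is $2$-extendable.

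First, I would dispense with the global requirements of $2$-extendability. The vertex set and bipartition $(A,B)$ are unchanged by the addition of a single edge, and $\Abs{\V{G+ab}} = \Abs{\V{G}} \geq 2 \cdot 2 + 2 = 6$ since $G$ itself is $2$-extendable. Connectivity is preserved under edge additions, so $G+ab$ is connected.

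Next, I would verify the structural condition from the preceding theorem: for each $i \in \Set{1,2}$ and any $A_i \subseteq A$, $B_i \subseteq B$ with $\Abs{A_i} = \Abs{B_i} = i$, the graph $(G + ab) - A_i - B_i$ has a perfect matching. Since $G$ is a brace, hence $2$-extendable, $G - A_i - B_i$ has a perfect matching $M$. Observe that $(G+ab) - A_i - B_i$ differs from $G - A_i - B_i$ only by possibly containing the single extra edge $ab$ (if neither $a$ nor $b$ was deleted). In either case, $(G+ab) - A_i - B_i$ contains $G - A_i - B_i$ as a spanning subgraph, so $M$ remains a perfect matching. Applying the characterization, $G + ab$ is $2$-extendable and therefore a brace.

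There is no real obstacle here: the content of the corollary is essentially a one-line consequence of the characterization, because the defining condition for $k$-extendability in bipartite graphs is monotone under edge additions that respect the bipartition. The only thing to be careful about is to read the characterization correctly and to handle the boundary case where $a$ or $b$ lies in the removed set $A_i \cup B_i$, which is trivial.
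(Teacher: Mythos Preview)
Your proof is correct and follows exactly the route the paper intends: the paper states the characterization of bipartite $k$-extendability in terms of perfect matchings of $G-A_i-B_i$, remarks that this condition is preserved when adding an edge between the two colour classes, and records the corollary without further argument. Your write-up simply spells out that monotonicity step, so there is nothing to add.
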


This corollary allows the construction of edge-maximal braces of width two we are aiming for.
We conclude this section with a second characterisation of perfect matching width two braces, this time in terms of edge-maximal supergraphs.

\begin{theorem}\label{thm:ladders}
	Let $B$ be brace with $\Abs{V_1}=n$.
	Then, $\pmw{B}=2$ if and only if $B\subseteq L_n$.
\end{theorem}

\begin{proof}
	We start by proving that every conformal subgraph of $L_n$ is of perfect matching width $2$ or isomorphic to $K_2$.
	To do so, by \cref{lemma:subgraphwidth}, it suffices to show $\pmw{L_n}=2$ for all $n\in\N$ with $n\geq 2$.
	The definition of $L_n$ directly provides an ordering $\lambda$ of $V_1=\Set{v_1,\dots,v_n}$ with $\Fkt{\lambda}{v_i}=i$.
	We prove that $\Width{\lambda}=2$.
	Let $i\in\Set{1,\dots,n-3}$ be arbitrary.
	By definition, $\NeighboursG{L_n}{v_i}=\Set{u_1,\dots,u_{i+2}}\subseteq\Reach{L_n}{\lambda}{v_i}$.
	Moreover, as $\NeighboursG{L_n}{v_j}\subseteq\NeighboursG{L_n}{v_i}$ for all $j\leq i$, $\NeighboursG{L_n}{v_i}=\Reach{L_n}{\lambda}{v_i}$.
	Therefore, $\Abs{\Reach{L_n}{\lambda}{v_i}}-\Abs{\Prec{L_n}{\lambda}{v_i}}=2$ for all $i\in\Set{1,\dots,n-2}$ and thus, $\Width{\lambda}=2$.
	By \cref{thm:width2elimination}, this implies $\pmw{L_n}=2$, as desired.
	
	Now, let $B$ be a brace of perfect matching width two.
	Then, there is an ordering $\lambda$ of $V_1$ of width two by \cref{thm:width2elimination}.
	Let us number the vertices of $V_1$ according to $\lambda$, so for all $i\in\Set{1,\dots,n}$ let $v_i\coloneqq\Fkt{\lambda^{-1}}{i}$.
	We construct a numbering of the vertices in $V_2$ as follows.
	Let $\NeighboursG{B}{v_1}=\Set{u_1,u_2,u_3}$ be numbered arbitrarily.
	The size of the neighbourhood of $a_1$ follows immediately from the width of $\lambda$ and the fact that $B$ is a brace.
	Now, as a consequence of \cref{lemma:decomp_2_structure}, for every $i\in\Set{1,\dots,n-2}$, $\Reach{B}{\lambda}{v_i}\setminus\Reach{B}{\lambda}{v_{i-1}}$ contains exactly one vertex, which is in $V_2$.
	Let $u_{i+2}$ be this vertex.
	Now, $\NeighboursG{B}{v_i}\subseteq\Reach{B}{\lambda}{v_i}$ for all $i\in\Set{1,\dots,n}$ and thus $B$ does not contain an edge that does not obey the definition of a bipartite ladder with respect to the orderings of $V_1$ and $V_2$ as obtained above.
	If there are indices $i\in\Set{1,\dots,n}$ and $j\in\Set{1,\dots,n}$ such that $v_iu_j\notin\E{B}$, but $j\leq i+2$, then we simply add the edge $v_iu_j$ to $B$.
	By \cref{cor:addingedges} $B+v_iu_j$ is still a brace and by choice of $i$ and $j$, adding this edge does not change the predecessor- and reachability-sets of any vertices in $V_1$, hence $\lambda$ is still an ordering of width two for $G+v_iu_j$.
	Thus, we can keep adding edges in this fashion until we do not find such a pair of indices any more.
	In that case let $B'$ be the newly obtained brace.
	By construction, $B'$ is isomorphic to $L_n$ and thus $B$ is a conformal subgraph of $L_n$.
\end{proof}

\section{Perfect Matching Width and Treewidth}\label{sec:comparissontotreewidth}

A natural question for any new width parameter is how it compares to other, already known parameters.
We have already seen a way to relate the perfect matching width of bipartite graphs and directed treewidth.
However, to apply our findings the graph itself has to be transformed.
In the first part of this short section we discuss the relation between the (undirected) treewidth\footnote{The definition of treewidth can be found in \cite{vatshelle2012new} by the interested reader.} of $G$ and its perfect matching width.
To do this we use a parameter introduced by Vatshelle \cite{vatshelle2012new} which is already known to be equivalent to treewidth but is much closer to perfect matching width in spirit.

Let $G$ be a graph and $X\subseteq\V{G}$.
We denote by $\Fkt{\MM}{\CutG{G}{X}}$ the number $\MatNum{\InducedSubgraph{G}{\CutG{G}{X}}}$ which is the maximum number of pairwise disjoint edges in $\CutG{G}{X}$.

A \emph{maximum matching decomposition} of $G$ is a tuple $\Brace{T,\DecompBijection{}}$ where $T$ is a cubic tree and $\DecompBijection{}\colon\Leaves{T}\rightarrow\V{G}$ is a bijection.

Recall from the definition of perfect matching width the associated edge cut $\CutG{G}{e}$ for every edge $e\in\E{T}$.
The \emph{width} of a maximum matching decomposition $\Brace{T,\DecompBijection{}}$ is defined as the maximum $\Fkt{\MM}{\CutG{G}{e}}$ over all $e\in\E{T}$, and the \emph{maximum matching width} of $G$, denoted by $\mmw{G}$, is the minimum width over all maximum matching decompositions of $G$.

\begin{theorem}[\cite{vatshelle2012new,jeong2018maximum}]\label{thm:mmw}
	Let $G$ be a graph and let $\tw{G}$ denote the treewidth of $G.$
	Then $\mmw{G}\leq\tw{G}+1\leq 3\mmw{G}.$
\end{theorem}

With this it is straight forward to bound the perfect matching width of a graph $G$ with a perfect matching in terms of its treewidth.

\begin{proposition}\label{prop:pmw;eqtw}
	Let $G$ be a graph with a perfect matching.
	Then $\pmw{G}\leq \tw{G}+1$.
\end{proposition}

\begin{proof}
	By \cref{thm:mmw} we have $\mmw{G}\leq \tw{G}+1$, so there exists a maximum matching decomposition $\Brace{T,\DecompBijection{}}$ for $G$ of width at most $\tw{G}+1$.
	Now let $M$ be any perfect matching of $G$ and $t_1t_2\in\E{T}$.
	Note that $M\cap\CutG{G}{\DecompBijection{T_{t_1}}}$ is a matching, hence $\Abs{M\cap\CutG{G}{t_1t_2}}\leq \Fkt{\MM}{\CutG{G}{t_1t_2}}\leq\tw{G}+1$.
	Indeed, as $M$ was chosen arbitrarily we have $\MatPor{\CutG{G}{t_1t_2}}\leq\tw{G}+1$ and thus $\Brace{T,\DecompBijection{}}$ is a perfect matching decomposition of $G$ of width at at most $\tw{G}+1$ and our claim follows.
\end{proof}

While treewidth gives us an upper bound on the perfect matching width of $G$, the reverse is not true in general.
With these findings we close this chapter.

\begin{proposition}\label{prop:pmwneqtw}
	For every $k\in\N$ with $k\geq 2$ there exists a brace $B_k$ with $\pmw{B_k}=2$ and $\tw{B_k}\geq k$. 
\end{proposition}

\begin{proof}
	First note that for every $t\in\N$, $\tw{K_{t+1}}=t$.
	So if we can show that $B_k$ contains $K_{k+1}$ as a minor we have proven $\tw{B_k}\geq k$ since treewidth is monotone under taking minors.
	Let $B_k$ be the bipartite ladder $L_{k+1}$ of order $k+1$.
	Then, by \cref{thm:ladders}, $\pmw{B_k}=2$ for all $k$.
	Now let $k+1\geq 2$ be chosen arbitrarily.
	We choose the perfect matching $M \coloneqq \CondSet{u_iv_i}{i\in\Set{1,\dots,k+1}}$.
	By definition of $L_{k+1}$ we know $u_iv_j$ for every $i\in\Set{1,\dots,k+1}$ and every $j\in\Set{i,\dots,k+1}$, so by contracting all edges in $M$ we obtain a graph on $k+1$ vertices with $\Choose{k+1}{2}$ edges.
	So $B_k/M$ is isomorphic to $K_{k+1}$ and we are done.
\end{proof}

\section{Computing Perfect Matching Decompositions of Width Two}\label{sec:algorithm}
\label{sec:alg}

In this section we provide an explicit polynomial time algorithm to compute an optimal perfect matching decomposition for braces of perfect matching width 2.
We do so by first finding a matching elimination ordering, as is possible due to \cref{thm:width2elimination}.
By \cref{lemma:decomp_2_structure}, such a construction has to start in a degree-3 vertex that builds a claw together with its neighbours.
This in particular allows us to immediately discard any claw-free graph.
Then the construction proceeds by choosing in each step a new vertex from the same colour class whose neighbourhood contains at most one vertex that is not already in the neighbourhood of the previously chosen vertices.
In the correct decomposition such a vertex exists by \cref{cor:eclosedminorities} and \cref{lemma:imbalance}.
Thus, if at some point there is no such vertex to pick there are two possible reasons.
Either the initial claw was not the optimal one to choose, then the algorithm starts over with a different claw.
Or, the brace has perfect matching width at least 3, this the algorithm concludes when having tried all possible claws.

	\begin{algorithm}[ht]
		\caption{Compute width-$2$-ordering}\label{alg:order}
		\begin{algorithmic}[1]
			\Procedure{order}{$V_2$, $V_1$}
			\State $\lambda^{-1}\gets\emptyset$
			\ForAll{$a\in V_1$}\label{step:chooseclaw}
				\State{$\lambda^{-1}\gets\emptyset$}
				\If{$\Abs{\Neighbours{a}}=3$}
					\State $\Fkt{\lambda^{-1}}{1}\gets a$
					\State $U\gets V_1\setminus\Set{a}$
					\State $P\gets \Set{a}$
					\ForAll{$i\in\Set{2,\dots,\Abs{V_1}}$}\label{step:choosea}
						\ForAll{$a'\in U$}\label{step:chooseap}
							\If{$\Abs{\Neighbours{a'}\setminus\Neighbours{P}}\leq 1$}\label{step:compareneighbourhood}
								\State $\Fkt{\lambda^{-1}}{i}\gets a'$
								\State $P\gets P\cup\Set{a'}$
								\State $U \gets U\setminus\Set{a'}$
								\State \textbf{break}
							\EndIf
						\EndFor
						\If{$\Fkt{\lambda^{-1}}{i}=\emptyset$}
							\State{\textbf{break}}
						\EndIf
					\EndFor
					\If{$\Fkt{\lambda^{-1}}{\Abs{V_1}}\neq\emptyset$}
						\textbf{return} $\lambda$
					\EndIf
				\EndIf
			
			\EndFor
			\State \textbf{return} $B$ is not of perfect matching width $2$.
			\EndProcedure
		\end{algorithmic}
	\end{algorithm}
	
	\begin{lemma}\label{lemma:optorder}
	Let $B=\Brace{V_1\cup V_2,E}$ be a brace.
	Then \cref{alg:order} computes an ordering $\lambda$ of width $2$ on input $V_2$ and $V_1$ if and only if $\pmw{B}=2$.
\end{lemma}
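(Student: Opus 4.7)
The plan is to prove both directions of the equivalence through \cref{thm:width2elimination}, which identifies perfect matching width $2$ with matching elimination width $2$.

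For the forward direction, suppose \cref{alg:order} returns $\lambda$, and let $P_i \coloneqq \CondSet{\Fkt{\lambda^{-1}}{j}}{1 \leq j \leq i}$. The initial test $\Abs{\FktO{N}{\Fkt{\lambda^{-1}}{1}}} = 3$ together with the selection rule $\Abs{\FktO{N}{a'} \setminus \FktO{N}{P}} \leq 1$ at \cref{step:compareneighbourhood} forces $\Abs{\FktO{N}{P_i}} \leq \Abs{P_i} + 2$ by induction, while the $2$-extendability bound from \cref{thm:exthall} gives $\Abs{\FktO{N}{P_i}} \geq \Abs{P_i} + 2$ whenever $\Abs{P_i} \leq \Abs{A}-2$. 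Equality therefore holds throughout, yielding $\Abs{\Reach{G}{\lambda}{a}} - \Abs{\Prec{G}{\lambda}{a}} = 2$ on all interior positions and at most $1$ on the final two. Hence $\Width{\lambda} = 2$, so $\MEOW{G} = 2$, and \cref{thm:width2elimination} concludes $\pmw{G} = 2$.

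For the reverse direction, assume $\pmw{G} = 2$. By \cref{thm:width2elimination} there exists a width-$2$ ordering $\mu^*$ whose first vertex $a_1^*$ has exactly three neighbours (this also follows directly from the claw provided by \cref{lemma:4cuts}, whose central vertex lies in the minority class $A$). In particular, $a_1^*$ is enumerated by the outer loop at \cref{step:chooseclaw}. It therefore suffices to show that the inner loop, seeded with such a starting vertex, cannot get stuck. The core of the argument is the following extension claim: \emph{if $P \subseteq A$ is the vertex set of the first $\Abs{P}$ positions of some width-$2$ ordering $\mu$ of $G$ and $a' \in A \setminus P$ satisfies $\Abs{\FktO{N}{a'} \setminus \FktO{N}{P}} \leq 1$, then $P \cup \Set{a'}$ is the vertex set of the first $\Abs{P}+1$ positions of some width-$2$ ordering}. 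An induction on iterations then yields that every prefix assembled by the algorithm remains an initial segment of some width-$2$ ordering, so at least one valid choice is always available and the algorithm terminates with a complete ordering.

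I would prove the extension claim constructively: obtain $\mu'$ from $\mu$ by moving $a'$ from its current position in $\mu$ to position $\Abs{P}+1$, shifting the intervening entries back by one. For every prefix $Q$ of $\mu'$ strictly extending $P$ with $\Abs{Q} \leq \Abs{A}-2$, the lower bound $\Abs{\FktO{N}{Q}} \geq \Abs{Q}+2$ is immediate from \cref{thm:exthall}, while the matching upper bound
\begin{equation*}
\Abs{\FktO{N}{Q}} \;\leq\; \Abs{\FktO{N}{Q\setminus\Set{a'}}} + \Abs{\FktO{N}{a'}\setminus\FktO{N}{Q\setminus\Set{a'}}} \;\leq\; \Brace{\Abs{Q}+1} + 1 \;=\; \Abs{Q}+2
\end{equation*}
follows because $Q\setminus\Set{a'}$ is a prefix of $\mu$ of surplus $2$, and $\FktO{N}{a'}\setminus\FktO{N}{Q\setminus\Set{a'}} \subseteq \FktO{N}{a'}\setminus\FktO{N}{P}$ has size at most $1$ (using $P \subseteq Q\setminus\Set{a'}$). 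Prefixes of $\mu'$ of length at most $\Abs{P}$ or beyond the original position of $a'$ coincide with prefixes of $\mu$ and need no further argument. The main obstacle is precisely this extension claim: the delicate point is that relocating $a'$ must not inflate the surplus of any later prefix, and this is exactly what the propagation of the size-$1$ excess on $a'$ from $P$ to every larger superset guarantees.
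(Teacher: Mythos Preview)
Your proof is correct and follows essentially the same approach as the paper. Both directions match: the forward direction bounds the surplus of each prefix by combining the algorithm's selection rule with \cref{thm:exthall}, and the reverse direction is the same exchange argument---move the algorithm's chosen vertex $a'$ forward in a known width-$2$ ordering and observe that each affected prefix gains one element in its predecessor set while its reachability set grows by at most one (since $\FktO{N}{a'}\setminus\FktO{N}{Q\setminus\Set{a'}}\subseteq\FktO{N}{a'}\setminus\FktO{N}{P}$). The only cosmetic difference is that the paper phrases this as ``choose a width-$2$ ordering agreeing maximally with the algorithm's choices and derive a contradiction'', whereas you phrase it as an explicit extension claim proved by forward induction; these are the two standard packagings of the same argument.
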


	\begin{proof}
		First, assume \cref{alg:order} returns an ordering $\lambda$ for the input $V_2$ and $V_1$.
		Then, we can consider the sets $\Prec{B}{\lambda}{a}$ and $\Reach{B}{\lambda}{a}$.
		
		Proving $\Abs{\Reach{B}{\lambda}{\Fkt{\lambda^{-1}}{j}}}-\Abs{\Prec{B}{\lambda}{\Fkt{\lambda^{-1}}{j}}}\leq 2$ for all $j\in\Set{1,\dots,\Abs{V_1}}$ by induction shows $\Width{\lambda}=2$ as $2\leq \Width{\lambda}$ since $B$ is a brace.
		If $j\in\Set{1,\Abs{V_1}-1,\Abs{V_1}}$, there is nothing to show.
		So, suppose $2\leq j\leq\Abs{V_1}-3$ and let $a\coloneqq\Fkt{\lambda^{-1}}{j}$.
		That is, $a$ is chosen in the iteration for $i=j$ in \cref{step:choosea}.
		Let $P_a$ and $U_a$ be the sets $P$ and $U$ during this step of the algorithm.
		The set $P_a$ contains all vertices that were previously chosen by \cref{alg:order} and thus are smaller than $a$ with respect to $\lambda$.
		Hence $\Prec{B}{\lambda}{a}= P_a\cup\Set{a}$ and $\Prec{B}{\lambda}{\Fkt{\lambda^{-1}}{j-1}}=P_a$.
		With $a$ being chosen at step $j$, we know $\Abs{\Neighbours{a}\setminus\Neighbours{P_a}}\leq 1$.
		Therefore,
		\begin{align*}
			\Abs{\Reach{B}{\lambda}{a}}-\Abs{\Prec{B}{\lambda}{a}}=&\Abs{\Neighbours{P_a\cup\Set{a}}}-\Abs{P_a\cup\Set{a}]}\\
			\leq&\Abs{\Neighbours{P_a}}+1-\Brace{\Abs{P_a}+1}\\
			\leq&\Abs{P_a}+3-\Brace{\Abs{P_a}+1}=2.
		\end{align*}
		Hence, by \cref{thm:width2elimination}, $\Width{\lambda}=2$ and therefore $\pmw{B}=2$.

		Second, assume $\pmw{B}=2$.
		By \cref{thm:width2elimination}, there exists an ordering $\sigma$ of $V_1$ with $\Width{\sigma}=2$.
		We have already seen that if \cref{alg:order} returns an ordering $\lambda$, it is of width $2$.
		So what remains to show is that the algorithm returns an ordering.
		Suppose it does not.
		
		Let $a_1 \coloneqq \Fkt{\lambda^{-1}}{1}$.
		Since \cref{alg:order} only terminates without returning an ordering when it looped through all elements for the choice in \cref{step:chooseclaw}, it reaches the point where it chooses $a_1$.
		Now, \cref{alg:order} can choose the next element in \cref{step:chooseap} fulfilling the demand in \cref{step:compareneighbourhood} according to the ordering $\lambda$.
		Since it does not end up returning an ordering it eventually differs from any optimal ordering and then reaches the point $2 \leq k \leq \Abs{V_1}$ at which there is no element to choose in \cref{step:chooseap} fulfilling the demand in \cref{step:compareneighbourhood}.
		Let $a_1 \dots,a_k$ be elements of $V_1$ that \cref{alg:order} ordered this way so far before it gets stuck.
		Let $\sigma$ be chosen among all width-$2$-orderings of $V_1$ maximising $j\in\Set{1,\dots,k-1}$ such that $\Fkt{\sigma^{-1}}{i}=a_i$ for all $1\leq i< j$ and $\Fkt{\sigma^{-1}}{j}\neq a_j$.
		We refer to the elements after $a_j$ in $\sigma$ by $y_h\coloneqq\Fkt{\sigma^{-1}}{h}$ for all $h\in\Set{j+1,\dots,\Abs{V_1}}$.
		By the definition of the algorithm, $\Abs{\Neighbours{a_j}\setminus\Neighbours{a_1,\dots,a_{j-1}}}\leq 1$.
		Let $\sigma'$ be the ordering obtained from $\sigma$ by inserting $a_j$ at the position $j$ instead of its position $j+x$ in $\sigma$.
		So $\sigma'$ contains the elements of $V_1$ in the order $a_1,\dots,a_j,y_{j+1},\dots,y_{j+x-1},y_{j+x+1},\dots,y_{\Abs{V_1}}$.	
		
		Suppose, $\Width{\sigma'}\geq 3$.
		There is a vertex $y_{h'}$ with $h'\in\Set{j+1,\dots,j+x-1}$ such that
		\begin{align*}
			\Abs{\Reach{B}{\sigma'}{y_{h'}}}-\Abs{\Prec{B}{\sigma'}{y_{h'}}}\geq 3.
		\end{align*}
		But $\Abs{\Prec{B}{\sigma'}{y_{h'}}}=\Abs{\Prec{B}{\sigma}{y_{h'}}}+1$ and with
		\begin{align*}
			\Abs{\Neighbours{a_j}\setminus\Neighbours{a_1,\dots,a_{j-1}}}\leq 1
		\end{align*}
		we obtain $\Abs{\Reach{B}{\sigma'}{y_{h'}}}\leq\Abs{\Reach{B}{\sigma}{y_{h'}}}+1$.
		Thus,
		\begin{align*}
			\Abs{\Reach{B}{\sigma}{y_{h'}}}-\Abs{\Prec{B}{\sigma}{y_{h'}}}\geq 3,
		\end{align*}
		which contradicts $\sigma$ to be of width $2$.
		Hence $\Width{\sigma'}=2$.
		However, this is a contradiction to the choice of $\sigma$ as $\sigma'$ now coincides on the first $j$ positions with the choice of \cref{alg:order}.
		Thus, the algorithm does not get stuck once it chose the right claw and therefore, \cref{alg:order} returns an ordering.
	\end{proof}

	So \cref{alg:order} produces an elimination ordering of width $2$ if and only if the brace $B$ that was given as input is of perfect matching width $2$.
	This ordering can be translated into a perfect matching decomposition of width $2$, as seen in the second part of the proof of \cref{thm:width2elimination}.
	Since all sets necessary for the construction of this decomposition can be computed from the ordering by iterating over edges and vertices of $B$ at most once, this procedure runs in polynomial time and thus, we obtain the following result which concludes this section.

	\begin{theorem}\label{thm:computewidth}
		Let $B=\Brace{V_1\cup V_2,E}$ be a brace.
		There is a polynomial time algorithm that computes a perfect matching decomposition of width $2$ if and only if $\pmw{B}=2$.
	\end{theorem}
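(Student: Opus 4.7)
The plan is to combine \cref{alg:order}, \cref{lemma:optorder}, and the constructive direction of \cref{thm:width2elimination} into a single polynomial-time procedure and then carry out the runtime accounting. First, I would argue that \cref{alg:order} runs in polynomial time: the outer loop over the choice of starting claw in \cref{step:chooseclaw} iterates at most $|A|$ times, the loop in \cref{step:choosea} also iterates at most $|A|$ times, and each candidate in \cref{step:chooseap} requires a single neighbourhood-comparison that can be performed in time $O(|A|+|B|)$ by maintaining $\Fkt{N}{P}$ incrementally as a bitmask (or an indicator array), updating it in time $O(\deg(a'))$ whenever an element is appended to $P$. Thus the total running time is bounded by $O(|A|^2 \cdot (|A|+|E|))$, which is polynomial in the size of $G$.

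Next, I would invoke \cref{lemma:optorder}: if $\pmw{G}=2$ then \cref{alg:order} necessarily returns a linear ordering $\lambda$ of $A$, and \cref{thm:width2elimination} guarantees $\Width{\lambda}=2$; conversely, if \cref{alg:order} returns an ordering $\lambda$, then $\Width{\lambda}=2$, hence $\MEOW{G}=2$ and $\pmw{G}=2$ by \cref{thm:width2elimination}. So the output of \cref{alg:order} already certifies whether $G$ lies in the target class, and in the positive case yields the elimination ordering needed for the decomposition.

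Finally, I would turn $\lambda$ into a \PM-decomposition by following the explicit construction in the reverse direction of the proof of \cref{thm:width2elimination}. Concretely, with $a_i \coloneqq \Fkt{\lambda^{-1}}{i}$, I would iteratively compute $X_1 \coloneqq \Set{a_1}\cup\FktO{N}{a_1}$, then $X_i \coloneqq X_{i-1}\cup\Set{a_i}\cup\FktO{N}{a_i}$ for $i\in\Set{2,\dots,n-3}$, and $X_{n-2} \coloneqq X_{n-3}\cup\Set{a_{n-2},a_{n-1},a_n}\cup\FktO{N}{\Set{a_{n-2},a_{n-1},a_n}}$. The sets $X_i \setminus X_{i-1}$ have size exactly two for internal $i$ and sizes $4$ and $4$ at the ends, as shown in the proof of \cref{thm:width2elimination}. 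I would then build the cubic tree $T$ consisting of a spine-of-spine path $t_1 t_2 \cdots t_{n-2}$ with the appropriate leaf attachments at each $t_j$, and define $\delta$ by mapping the leaves at $t_j$ to the vertices of $X_j \setminus X_{j-1}$ (with the boundary cases handled by $X_1$ and $\Compl{X_{n-3}}$). Maintaining the $X_i$ incrementally costs $O(|E|)$ in total, and building $T$ together with $\delta$ takes time linear in $|\V{G}|$, so the whole translation is polynomial.

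The main obstacle is really just bookkeeping; the structural content is already fully furnished by \cref{lemma:optorder} and \cref{thm:width2elimination}. The only subtlety worth double-checking is that the incremental construction of $\Fkt{N}{P}$ inside \cref{alg:order} can be maintained in amortised linear time per update (so that the neighbourhood test in \cref{step:compareneighbourhood} is not bottlenecked by recomputation), and that the boundary segments $X_1$ and $\Compl{X_{n-3}}$ correctly correspond to the claws guaranteed by \cref{lemma:4cuts} so that the resulting decomposition tree $T$ is genuinely cubic.
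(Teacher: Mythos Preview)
Your proposal is correct and follows essentially the same approach as the paper: invoke \cref{lemma:optorder} for correctness of \cref{alg:order}, then feed the resulting ordering into the explicit construction from the second half of the proof of \cref{thm:width2elimination}. The paper's own argument is terser (it simply asserts that the translation from ordering to decomposition iterates over edges and vertices at most once and is therefore polynomial), whereas you spell out the loop structure and the incremental maintenance of $\FktO{N}{P}$; but the logical skeleton is identical.
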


\section{Bipartite Graphs of M-Perfect Matching Width Two}\label{sec:Mpmw}

\Cref{sec:pmw2} provides a complete characterisation of braces of perfect matching width two.
However, we are not able to lift this result to all bipartite matching covered graphs, since we do not know whether the braces of a matching covered bipartite graph of perfect matching width two are also of perfect matching width two themselves.
To be more precise, for a matching covered bipartite graph $B$ with $\pmw{B}=2$, the best we know about any brace $H$ of it is $\pmw{H}\in\Set{2,3,4}$ by \cref{cor:bricksandbraceslowerbound}.
We can however consider the $M$-perfect matching width instead since here \cref{lemma:Mpmwandmatminors} implies that $\Mpmw{M}{B}$ bounds $\Mpmw{\Remainder{M}{H}}{H}$.
Indeed, since $K_2$ is the only matching covered graph of $M$-perfect matching width one, $B$ has $M$-perfect matching width two if and only if every brace $H$ of $B$ has $\Remainder{M}{H}$-perfect matching width two.

In this section we present a full characterisation of the braces of $M$-perfect matching width two and thus, provide a description of all matching covered bipartite graphs that have a perfect matching $M$ such that their $M$-perfect matching width is $2$.

Key to this characterisation is the observation that, given a brace $B$, $2\leq\pmw{B}\leq \Mpmw{M}{B}$ for all $M\in\Perf{B}$.
So, if $\Mpmw{M}{B}=2$ for some $M$, then every optimal $M$-decomposition of $B$ also is an optimal perfect matching decomposition of $G$.
Therefore we can apply the results from \cref{sec:pmw2}.
This immediately implies a rather strict bound on the number of vertices, which in turn narrows down the braces of $M$-perfect matching width two to exactly two, namely $K_{3,3}$ and $C_4$.

\begin{proposition}\label{prop:mwidth2braces}
	Let $B$ be a brace, then the following statements are equivalent.
	\begin{enumerate}
		
		\item $\Mpmw{M}{B}=2$ for an $M\in\Perf{B}$,\label{prop1}
		
		\item $\Mpmw{M}{B}=2$ for all $M\in\Perf{B}$, and\label{prop2}
		
		\item $B$ is isomorphic to $C_4$ or $K_{3,3}$.\label{prop3}
		
	\end{enumerate}
\end{proposition}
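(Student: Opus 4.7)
The plan is to prove the equivalences via the cycle \ref{prop3}$\Rightarrow$\ref{prop2}$\Rightarrow$\ref{prop1}$\Rightarrow$\ref{prop3}. The implication \ref{prop2}$\Rightarrow$\ref{prop1} is immediate because every brace has at least one perfect matching, so all the work lies in \ref{prop1}$\Rightarrow$\ref{prop3} and in the explicit verification \ref{prop3}$\Rightarrow$\ref{prop2}.

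For \ref{prop1}$\Rightarrow$\ref{prop3}, assume $\Mpmw{M}{G}=2$ for some $M\in\Perf{G}$. By \cref{prop:pmw_Mpmw} we have $\pmw{G}\leq\Mpmw{M}{G}=2$, and since $\Mpmw{M}{G}=2$ rules out $G\cong K_2$, we get $\pmw{G}=2$. Fix an $M$-\PM-decomposition $\Brace{T,\delta}$ of $G$ of width $2$; it is then automatically an optimal \PM-decomposition. The crux is to bound $\Abs{\Fkt{V}{G}}$. Suppose $\Abs{\Fkt{V}{G}}\geq 8$: \cref{lemma:4cuts} yields an edge of $T$ whose cut has a shore $X$ of size four with $\InducedSubgraph{G}{X}$ isomorphic to a claw. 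Since $\Brace{T,\delta}$ is an $M$-\PM-decomposition, $X$ is $M$-\conformal, so $\InducedSubgraph{G}{X}$ must admit a perfect matching, contradicting the fact that the claw $K_{1,3}$ has none. Hence $\Abs{\Fkt{V}{G}}\leq 6$, and being balanced bipartite this forces $\Abs{\Fkt{V}{G}}\in\Set{4,6}$. In the case $\Abs{\Fkt{V}{G}}=4$, the only bipartite matching covered graph on four vertices is $C_4$. In the case $\Abs{\Fkt{V}{G}}=6$ we have $\Abs{A}=3$, and applying \cref{thm:exthall} with $\Abs{X}=1$ forces $\Abs{\FktO{N}{a}}\geq 3=\Abs{B}$ for every $a\in A$, giving $G\cong K_{3,3}$.

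For \ref{prop3}$\Rightarrow$\ref{prop2}, I exhibit the required decompositions directly. Given $M\in\Perf{C_4}$, take the cubic tree with four leaves in which the two leaves at each end of the unique inner edge are mapped to the endpoints of one edge of $M$; both shores have size two, are $M$-\conformal, and the inner cut has porosity two. For $K_{3,3}$ with $M=\Set{a_1b_1,a_2b_2,a_3b_3}$, take the spider tree in which a central inner vertex is adjacent to three inner vertices $s_1,s_2,s_3$, each of which is in turn adjacent to two leaves mapped to $a_i$ and $b_i$. Every inner edge induces a cut with the $M$-\conformal size-two shore $\Set{a_i,b_i}$, and a brief inspection shows that no perfect matching of $K_{3,3}$ uses more than two of the four edges crossing such a cut, so the width equals two.

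The principal obstacle is the claw argument in \ref{prop1}$\Rightarrow$\ref{prop3}; once one observes that $M$-\conformal sets must induce subgraphs with a perfect matching and that the claw $K_{1,3}$ does not, the rest reduces to the small-case analysis on $\Abs{\Fkt{V}{G}}\in\Set{4,6}$ and the two explicit constructions above.
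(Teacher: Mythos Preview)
Your proof is correct and follows essentially the same route as the paper: both use \cref{lemma:4cuts} to find a size-four shore inducing a claw and derive a contradiction with $M$-conformality (you observe $K_{1,3}$ has no perfect matching, the paper observes $\Balance{X}=2$, which amounts to the same thing), and both then give the identical explicit decompositions for $C_4$ and $K_{3,3}$. Your treatment of the small cases $\Abs{\Fkt{V}{G}}\in\Set{4,6}$ via \cref{thm:exthall} is slightly more detailed than the paper's bare assertion that these are the only braces on at most six vertices.
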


\begin{proof}
	In order to prove this statement, we first deduce \cref{prop3} from \cref{prop1} and then observe that we can find the same type of decomposition for every $M\in\Perf{B}$ which then implies \cref{prop2}.
	
	Let $B$ be a brace and $M\in\Perf{B}$ such that $\Mpmw{M}{B}=2$, then $\pmw{B}=2$ as well.
	Let $\Brace{T,\DecompBijection{}}$ be an optimal $M$-decomposition for $B$, then it also is an optimal perfect matching decomposition of $B$.
	Now suppose $\Abs{\V{B}}\geq 8$.
	Then by \cref{lemma:decomp_2_structure}, there is an edge $e\in\E{\Spine{\Spine{T}}}$ such that $\CutG{B}{e}$ has a shore $X$ of size $4$ that induces a claw in $B$.
	In particular, $\Imbalance{X}=2$ and thus $X$ is not $M$-conformal.
	This is a contradiction to the definition of $M$-decompositions as $e$ is an inner edge of $T$.
	So $\Abs{\V{G}}\leq 6$.
	On at most $6$ vertices there are only two braces: $C_4$ and $K_{3,3}$.
	
	\begin{figure}[!ht]
		\begin{center}
			\begin{tikzpicture}[scale=0.8]
				
				\pgfdeclarelayer{background}
				\pgfdeclarelayer{foreground}
				
				\pgfsetlayers{background,main,foreground}

				\begin{pgfonlayer}{main}
					
					\node (C) [] {};
					
					\node (C1) [v:ghost, position=180:60mm from C] {};
					
					\node (C2) [v:ghost, position=0:0mm from C] {};
					
					\node (C3) [v:ghost, position=0:60mm from C] {};

					
					
					\node(C1a) [v:main,fill=black,position=135:16mm from C1] {};
					\node(C1b) [v:main,position=225:16mm from C1] {};
					\node(C1c) [v:main,fill=black,position=315:16mm from C1] {};
					\node(C1d) [v:main,position=45:16mm from C1] {};

					\node(L1a) [v:ghost,position=135:4.25mm from C1a] {$a$};
					\node(L1b) [v:ghost,position=225:4.25mm from C1b] {$b$};
					\node(L1c) [v:ghost,position=315:4.25mm from C1c] {$c$};
					\node(L1d) [v:ghost,position=45:4.25mm from C1d] {$d$};
					
					
					\node (block) [v:ghost,position=180:55mm from C1] {};
					
					
					
					\node(C2a) [v:tree,position=180:12mm from C2] {};
					\node(C2b) [v:tree,position=0:0mm from C2] {};
					\node(C2c) [v:tree,position=45:12mm from C2b] {};
					\node(C2d) [v:tree,position=225:12mm from C2a] {};
					\node(C2e) [v:tree,position=135:12mm from C2a] {};
					\node(C2f) [v:tree,position=315:12mm from C2b] {};
					
					\node (L2a) [v:ghost,position=135:4.25mm from C2e] {$a$};
					\node (L2b) [v:ghost,position=315:4.25mm from C2f] {$c$};
					\node (L2c) [v:ghost,position=45:4.25mm from C2c] {$b$};
					\node (L2d) [v:ghost,position=225:4.25mm from C2d] {$d$};

					

					

					

					

					
					
				\end{pgfonlayer}
				
				
				\begin{pgfonlayer}{background}
					
					\draw [e:coloredborder] (C1a) to (C1d);
					\draw [e:colored] (C1a) to (C1d);
					
					\draw [e:coloredborder] (C1b) to (C1c);
					\draw [e:colored] (C1b) to (C1c);			
					
					\draw (C1a) [e:main] to (C1b);
					\draw (C1c) [e:main] to (C1d);

					\draw (C2a) [e:main] to (C2b);
					\draw (C2a) [e:main] to (C2d);
					\draw (C2a) [e:main] to (C2e);
					\draw (C2b) [e:main] to (C2f);
					\draw (C2b) [e:main] to (C2c);
					
				\end{pgfonlayer}	
				
				\begin{pgfonlayer}{foreground}

				\end{pgfonlayer}
			\end{tikzpicture}
		\end{center}
		\caption{The brace $C_4$ together with a perfect matching $M$ and an $M$-decomposition $\Brace{T,\DecompBijection{}}$ of width two.}
		\label{fig:C4}
	\end{figure}
	
	First, consider $C_4$.
	Let $M\in\Perf{C_4}$ be a perfect matching.
	Then, $\V{C_4}=\Set{a,b,c,d}$ and without loss of generality $M=\Set{ad,bc}$.
	As $C_4$ is a cycle, the only other perfect matching of $C_4$ is $\E{C_4}\setminus M=\Set{ab,cd}$.
	We construct a perfect matching decomposition $\Brace{T,\DecompBijection{}}$ as follows.
	Take two vertices $t_1$ and $t_2$ joined by an edge.
	We create a cubic tree $T$ by adding two leaves $t_i^1$ and $t_i^2$ as new neighbours to each of the $t_i$ for $i\in\Set{1,2}$.
	Then, let $\DecompBijection{t_1^1}\coloneqq a$, $\DecompBijection{t_1^2}\coloneqq d$, $\DecompBijection{t_2^1}\coloneqq b$ and $\DecompBijection{t_2^2}\coloneqq c$ (see \cref{fig:C4}).
	Now, $\Brace{T,\DecompBijection{}}$ is an $M$-decomposition of $C_4$ and the matching porosity of every cut induced by an edge of $T$ is either one or two.
	Note that for the other perfect matching of $C_4$ we just have to adapt the mapping $\DecompBijection{}$ such that for each $i\in\Set{1,2}$ the leaves $t_i^1$ and $t_i^2$ are mapped to the endpoints of the same edge and thus $\Mpmw{M}{C_4}=2$ for all $M\in\Perf{C_4}$.
	
	\begin{figure}[!ht]
		\begin{center}
			\begin{tikzpicture}[scale=0.8]
				
				\pgfdeclarelayer{background}
				\pgfdeclarelayer{foreground}
				
				\pgfsetlayers{background,main,foreground}
				
				
				\begin{pgfonlayer}{main}
					
					\node (C) [] {};
					
					\node (C1) [v:ghost, position=180:60mm from C] {};
					
					\node (C2) [v:ghost, position=0:0mm from C] {};
					
					\node (C3) [v:ghost, position=0:60mm from C] {};

					
					
					\node(C1b) [v:main,position=90:8mm from C1] {};
					\node(C1a) [v:main,fill=black,position=180:14mm from C1b] {};
					\node(C1c) [v:main,fill=black,position=0:14mm from C1b] {};
					\node(C1e) [v:main,fill=black,position=270:8mm from C1] {};
					\node(C1d) [v:main,position=0:14mm from C1e] {};
					\node(C1f) [v:main,position=180:14mm from C1e] {};

					\node(L1a) [v:ghost,position=90:4.25mm from C1a] {$a$};
					\node(L1b) [v:ghost,position=90:4.25mm from C1b] {$b$};
					\node(L1c) [v:ghost,position=90:4.25mm from C1c] {$c$};
					\node(L1d) [v:ghost,position=270:4.25mm from C1d] {$d$};
					\node(L1e) [v:ghost,position=270:4.25mm from C1e] {$e$};
					\node(L1f) [v:ghost,position=270:4.25mm from C1f] {$f$};
					
					
					\node (block) [v:ghost,position=180:55mm from C1] {};
					
					
					
					\node(C2a) [v:tree,position=180:12mm from C2] {};
					\node(C2b) [v:tree,position=0:0mm from C2] {};
					\node(C2c) [v:tree,position=0:12mm from C2] {};
					\node(C2d) [v:tree,position=90:12mm from C2] {};
					\node(C2e) [v:tree,position=135:12mm from C2a] {};
					\node(C2f) [v:tree,position=225:12mm from C2a] {};
					\node(C2g) [v:tree,position=45:12mm from C2c] {};
					\node(C2h) [v:tree,position=315:12mm from C2c] {};
					\node(C2i) [v:tree,position=135:12mm from C2d] {};
					\node(C2j) [v:tree,position=45:12mm from C2d] {};
					
					\node (L2a) [v:ghost,position=135:4.25mm from C2e] {$a$};
					\node (L2f) [v:ghost,position=225:4.25mm from C2f] {$f$};
					\node (L2c) [v:ghost,position=45:4.25mm from C2g] {$c$};
					\node (L2d) [v:ghost,position=315:4.25mm from C2h] {$d$};
					\node (L2c) [v:ghost,position=135:4.25mm from C2i] {$b$};
					\node (L2d) [v:ghost,position=45:4.25mm from C2j] {$e$};

					

					

					

					

					
					
				\end{pgfonlayer}
				
				
				\begin{pgfonlayer}{background}
					
					\draw (C1a) [e:main] to (C1e);
					\draw (C1b) [e:main] to (C1f);
					\draw (C1c) [e:main] to (C1f);
					\draw (C1a) [e:main] to (C1d);
					\draw (C1b) [e:main] to (C1d);
					\draw (C1c) [e:main] to (C1e);
					
					\draw [e:coloredborder] (C1a) to (C1f);
					\draw [e:colored] (C1a) to (C1f);
					
					\draw [e:coloredborder] (C1b) to (C1e);
					\draw [e:colored] (C1b) to (C1e);
					
					\draw [e:coloredborder] (C1c) to (C1d);
					\draw [e:colored] (C1c) to (C1d);

					\draw (C2a) [e:main] to (C2b);
					\draw (C2a) [e:main] to (C2e);
					\draw (C2a) [e:main] to (C2f);
					\draw (C2b) [e:main] to (C2c);
					\draw (C2b) [e:main] to (C2d);
					\draw (C2c) [e:main] to (C2g);
					\draw (C2c) [e:main] to (C2h);
					\draw (C2d) [e:main] to (C2i);
					\draw (C2d) [e:main] to (C2j);
					
				\end{pgfonlayer}	
				
				\begin{pgfonlayer}{foreground}

				\end{pgfonlayer}
			\end{tikzpicture}
		\end{center}
		\caption{The brace $K_{3,3}$ together with a perfect matching $M$ and an $M$-decomposition $\Brace{T,\DecompBijection{}}$ of width two.}
		\label{fig:K33}
	\end{figure}
	
	Second consider $K_{3,3}$ and let $V_1=\Set{a,b,c}$ and $V_2=\Set{d,e,f}$ and $M=\Set{af,be,cd}$ a perfect matching of $K_{3,3}$.
	We again construct an $M$-decomposition $\Brace{T,\DecompBijection{}}$ of our brace.
	This time consider a claw on the vertices $\Set{t,t_1,t_2,t_3}$ such that $t$ is the central vertex.
	For each $i\in\Set{1,2,3}$ we introduce two new neighbours $t_i^1$ and $t_i^2$ to $t_i$ which are the leaves of our cubic tree $T$.
	Then let $\DecompBijection{t_1^1}\coloneqq a$ and $\DecompBijection{t_1^2}\coloneqq f$.
	For the remaining two edges of $M$ proceed analogously  by choosing an $i\in\Set{2,3}$ for each of the remaining edges and then mapping the leaves $t_i^1$ and $t_i^2$ to the endpoints of the chosen edge.
	Now, $\Brace{T,\DecompBijection{}}$ is an $M$-decomposition of $K_{3,3}$ and for every inner edge $e$ of $T$ the cut induced by $e$ has a shore of size two, hence $\Width{T,\DecompBijection{}}=2$ (see \cref{fig:K33} for an illustration).
	Again, we can adapt the same strategy for every perfect matching $M'\in\Perf{B}$ and thus $\Mpmw{M}{B}=2$ for all $M\in\Perf{B}$.
	
	We have seen that for each of the braces $C_4$ and $K_{3,3}$ the $M$-perfect matching width equals two for all perfect matchings $M$.
	So, in particular there exists such a matching and thus, \cref{prop2} implies \cref{prop1} again and the proof is complete.
\end{proof}

With \cref{prop:mwidth2braces} we are able to deduce a similar theorem for general bipartite matching covered graphs of $M$-perfect matching width two.

\begin{proposition}\label{thm:mwidth2graphs}
	Let $B$ be a bipartite matching covered graph, then the following statements are equivalent.
	\begin{enumerate}
		
		\item $\Mpmw{M}{B}=2$ for an $M\in\Perf{B}$,\label{Prop1}
		
		\item $\Mpmw{M}{B}=2$ for all $M\in\Perf{B}$, and\label{Prop2}
		
		\item Every brace of $B$ is either isomorphic to $C_4$ or to $K_{3,3}$.\label{Prop3}
		
	\end{enumerate}
\end{proposition}

\begin{proof}
    If $B$ is a brace, then the statement holds by \cref{prop:mwidth2braces}.
    Thus assume that $B$ contains a tight cut.
    
	By \cite{lovasz1987matching}, it suffices to show that if the statement holds for the two tight cut contractions $B_Z\coloneqq\ContractsTo{B}{Z}{v_Z}$, and $B_{\Compl{Z}}\coloneqq\ContractsTo{B}{\Compl{Z}}{v_{\Compl{Z}}}$ of a bipartite matching covered graph $B$ with tight cut $\CutG{B}{Z},$ then it also holds for $B$.
	
	By induction hypothesis, the three statements are equivalent for both $B_Z$ and $B_{\Compl{Z}}$.
	
	Assume $\Mpmw{M}{B}=2$ for an $M\in\Perf{B}$ (\cref{Prop1}), then by \cref{lemma:Mpmwandmatminors} 
	$\Mpmw{\Remainder{M}{B_Z}}{B_Z} = \Mpmw{\Remainder{M}{B_{\Compl{Z}}}}{B_{\Compl{Z}}}=2$ and thus, the braces of both $B_Z$ and $B_{\Compl{Z}}$ are isomorphic to $C_4$ or $K_{3,3}$.
	Since the braces of $B$ are exactly the union of the braces of $B_Z$ and $B_{\Compl{Z}}$, \cref{Prop3} holds for $B$ as well.
	
	Next, assume that \cref{Prop3} holds for $B$.
	Pick any matching $M'\in\Perf{B}$, then by induction hypothesis $\Mpmw{\Remainder{M'}{B_Z}}{B_Z} = \Mpmw{\Remainder{M'}{B_{\Compl{Z}}}}{B_{\Compl{Z}}}=2$.
	Let $e_Z\in \Remainder{M'}{B_Z}$ and $e_{\Compl{Z}}\in \Remainder{M'}{B_{\Compl{Z}}}$ be the two edges covering $v_Z$ and $v_{\Compl{Z}}$ in the respective contractions for the respective reductions of $M'$.
	Let $u_X$ be the endpoint of $e_X$ that is not $v_X$ for both $X\in\Set{Z,\Compl{Z}}$.
	Moreover, let $\Brace{T_X,\DecompBijection{}_X}$ be an optimal $\Remainder{M'}{X}$-decomposition of  $B_X$ for both $X\in\Set{Z,\Compl{Z}}$.
	In $T_Z$ there is a vertex $t_Z$ that is adjacent to the two leaves of $T_Z$ that are mapped to $v_Z$ and $u_Z$, let $t_{\Compl{Z}}$ be chosen analogously.
	Observe, that $M'=\Brace{\Brace{\Remainder{M'}{B_Z}\cup\Remainder{M'}{B_{\Compl{Z}}}}\setminus\Set{e_Z,e_{\Compl{Z}}}}\cup\Set{u_Zu_{\Compl{Z}}}$.
	We construct an $M'$-decomposition $\Brace{T',\DecompBijection{}'}$ as follows.
	Let $T'_X$ be obtained from $T_X$ be deleting the two leaves adjacent to $t_X$ for both $X\in\Set{Z,\Compl{Z}}$.
	Then, let $T''$ be the tree obtained from $T'_Z$ and $T'_{\Compl{Z}}$ by identifying $t_Z$ and $t_{\Compl{Z}}$, call the new vertex $t$.
	At last, let $T'$ be the tree obtained from $T''$ by adding a new vertex $t'$, the edge $tt'$ and two new leaves $t_1$ and $t_2$ adjacent to the new vertex $t'$.
	Then, $T'$ is a cubic tree and $\Abs{\Leaves{T'}}=\Abs{\V{B}}$.
	In the next step we define $\DecompBijection{}':\Leaves{T'}\rightarrow\V{B}$ as follows:
	\begin{equation*}
		\Fkt{\DecompBijection{}'}{\ell}\coloneqq\FourCases{\Fkt{\DecompBijection{}_Z}{\ell}}{\text{if}~\ell\in\Leaves{T_Z}\setminus\Set{\Fkt{\DecompBijection{}_Z^{-1}}{v_Z}},}{\Fkt{\DecompBijection{}_{\Compl{Z}}}{\ell}}{\text{if}~\ell\in\Leaves{T_{\Compl{Z}}}\setminus\Set{\Fkt{\DecompBijection{}_{\Compl{Z}}^{-1}}{v_{\Compl{Z}}}},}{u_{\Compl{Z}}}{\text{if}~\ell=t_1\text{, and}}{u_Z}{\text{if}~\ell=t_2.}
	\end{equation*}
	Now, $\Brace{T',\DecompBijection{}'}$ is an $M'$-decomposition of $B$.
	Moreover, let $e\in\E{T'}$ be an inner edge of $T'$, then either $e$ is an inner edge of $T_Z$ or $T_{\Compl{Z}}$ and by construction of $T'$ and the fact that $\CutG{B}{Z}$ is tight, $\MatPor{\CutG{B}{e}} \leq 2$, or $e=tt'$.
	In the later case, $\CutG{B}{e}$ has a shore of size two and thus $\MatPor{\CutG{B}{e}}=2$.
	Therefore, $\Width{T',\DecompBijection{}'}=2$ and so $M'$-$\pmw{B}=2$ for all $M'\in\Perf{B}$, that is \cref{Prop2} holds.
	Since \cref{Prop2} implies \cref{Prop1}, we are done.
\end{proof}

So, in order to recognise a bipartite matching covered graph $B$ of $M$-perfect matching width two, it suffices to check whether $B$ has a brace not isomorphic to $C_4$ or $K_{3,3}$.
Lov{\'a}sz has shown that the tight cut decomposition of a matching covered graph can be computed in polynomial time (see \cite{lovasz1987matching}) and thus, \cref{thm:mwidth2graphs} implies a polynomial recognition algorithm for bipartite matching covered graphs of $M$-perfect matching width two.
Moreover, the proof of \cref{thm:mwidth2graphs} is constructive and can be used to obtain an $M$-decomposition of width two for any $M\in\Perf{B}$, given a bipartite matching covered graph $B$ of $M$-perfect matching width two, from the decompositions of its braces.
As these braces are only $C_4$ and $K_{3,3}$, whose optimal $M$-decompositions are given in the proof of \cref{prop:mwidth2braces}, we obtain the following corollary.

\begin{corollary}
	Let $B$ be a bipartite matching covered graph and $M\in\Perf{B}$.
	Then, we can compute in polynomial time either an $M$-decomposition of width two, or a brace of $B$ that is neither isomorphic to $C_4$, nor to $K_{3,3}$.
\end{corollary}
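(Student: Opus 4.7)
The plan is to combine Lov{\'a}sz's polynomial-time tight cut decomposition algorithm with the constructive proof of \cref{thm:mwidth2graphs} and the explicit decompositions exhibited in \cref{prop:mwidth2braces}. First I would invoke Lov{\'a}sz's algorithm on $G$ to produce, in polynomial time, a maximal laminar family $\mathcal{F}_G$ of \nontrivial tight cuts of $G$ together with the corresponding list of braces $H_1,\dots,H_t$ (no bricks appear, as $G$ is bipartite). Along the way I would also record, for every tight cut $\Cut{Z}\in\mathcal{F}_G$, the unique edge of $M$ contained in $\Cut{Z}$; this takes a single pass over $M$.

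Next I would examine each $H_i$ in turn and test whether it is isomorphic to $C_4$ or to $K_{3,3}$. This is a constant-size check (both graphs have at most six vertices), so it runs in polynomial total time. If some $H_i$ fails the test, I output $H_i$ as the required certificate and halt, and correctness follows directly from \cref{thm:mwidth2graphs}: since $G$ possesses a brace distinct from $C_4$ and $K_{3,3}$, no $M'\in\Perf{G}$ can yield an $M'$-\PM-decomposition of width $2$, so in particular none exists for $M$.

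Otherwise every $H_i$ is isomorphic to $C_4$ or $K_{3,3}$, and I proceed to assemble the decomposition. For each $H_i$, using the reduction $\Reduct{M}{H_i}$ of $M$ to the brace (which is well-defined and computable in polynomial time from the record kept in step one), I apply the explicit construction from the proof of \cref{prop:mwidth2braces} to obtain an $\Reduct{M}{H_i}$-\PM-decomposition of width $2$. Then I iteratively reverse the tight cut contractions in $\mathcal{F}_G$: at each step I take two already-decomposed matching covered graphs that share a contraction vertex in $\mathcal{F}_G$ and apply the merging procedure used in the proof of \cref{thm:mwidth2graphs}, namely deleting the leaves adjacent to the two contraction vertices in the two decomposition trees, gluing the trees at the resulting inner vertices, and inserting a new inner vertex with a pair of leaves mapped to the two endpoints of the unique $M$-edge crossing the undone tight cut. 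The width argument from that proof guarantees that every newly introduced inner edge induces a cut of matching porosity $2$, and the procedure is $M'$-conformal by construction.

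The main obstacle is simply bookkeeping rather than any conceptual difficulty: I must ensure that at every merging step I identify the correct contraction vertices in the current pair of decomposition trees and the correct $M$-partner on either side of the tight cut, so that the new leaves are mapped to the $M$-matched endpoints and the resulting decomposition remains $M'$-\conformal for the current intermediate matching. Since $\Abs{\mathcal{F}_G}$ is polynomial in $\Abs{\V{G}}$ and each merging step performs a constant amount of local surgery on trees of size $O(\Abs{\V{G}})$, the overall running time remains polynomial, finishing the proof.
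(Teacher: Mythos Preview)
Your proposal is correct and follows essentially the same approach as the paper: invoke Lov{\'a}sz's polynomial-time tight cut decomposition, test each brace against $C_4$ and $K_{3,3}$, and in the positive case assemble an $M$-\PM-decomposition by instantiating the explicit brace decompositions from \cref{prop:mwidth2braces} and merging them along the tight cuts via the constructive procedure in the proof of \cref{thm:mwidth2graphs}. Your write-up in fact supplies more algorithmic detail (the bookkeeping for the $M$-edge crossing each tight cut, and the per-step running-time accounting) than the paper's own one-paragraph justification preceding the corollary.
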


In order to obtain \cref{thm:Mpmw2}, we consider the family of odd M\"obius ladders.

\begin{definition}[Odd M\"obius ladders]
	An \emph{odd M\"obius ladder} of \emph{order} $k\geq 1$ is the graph $\mathscr{M}_{4k+2}$ obtained from the cycle
	\begin{equation*}
        \Brace{v^1_0,v^2_0,v^1_1,v^2_1,v^1_2,\dots,v^2_{2k-1},v^1_{2k},v^2_{2k},v^1_0}
	\end{equation*}
	by adding the edges $v^1_iv^2_{k+i~\!\Brace{\bmod 2k+1}}$, for all $i\in\Set{0,\dots,2k}$.
	See \cref{fig:macuaigbraces} for an illustration.
\end{definition}

\begin{figure}[!ht]
	\begin{center}
		\begin{tikzpicture}[scale=0.7]
			
			\pgfdeclarelayer{background}
			\pgfdeclarelayer{foreground}
			
			\pgfsetlayers{background,main,foreground}
			
			\begin{pgfonlayer}{main}
				
				\node (Layer1) [v:ghost] {};
				
				\node (L1C1) [v:ghost,position=0:0mm from Layer1] {};

				\node (L1C1label) [v:ghost,position=270:19mm from L1C1] {$\mathscr{M}_{6}$};
				
				\node (1-1v1) [v:main,position=90:10mm from L1C1] {};
				\node (1-1v2) [v:mainempty,position=150:10mm from L1C1] {};
				\node (1-1v3) [v:main,position=210:10mm from L1C1] {};
				\node (1-1v4) [v:mainempty,position=270:10mm from L1C1] {};
				\node (1-1v5) [v:main,position=330:10mm from L1C1] {};
				\node (1-1v6) [v:mainempty,position=30:10mm from L1C1] {};
				
				\node (L1C2) [v:ghost,position=0:33mm from L1C1] {};
				
				\node (L1C2label) [v:ghost,position=270:19mm from L1C2] {$\mathscr{M}_{10}$};
				
				\node (1-2v1) [v:main,position=0:11.5mm from L1C2] {};
				\node (1-2v2) [v:mainempty,position=36:11.5mm from L1C2] {};
				\node (1-2v3) [v:main,position=72:11.5mm from L1C2] {};
				\node (1-2v4) [v:mainempty,position=108:11.5mm from L1C2] {};
				\node (1-2v5) [v:main,position=144:11.5mm from L1C2] {};
				\node (1-2v6) [v:mainempty,position=180:11.5mm from L1C2] {};
				\node (1-2v7) [v:main,position=216:11.5mm from L1C2] {};
				\node (1-2v8) [v:mainempty,position=252:11.5mm from L1C2] {};
				\node (1-2v9) [v:main,position=288:11.5mm from L1C2] {};
				\node (1-2v10) [v:mainempty,position=324:11.5mm from L1C2] {};
				
				\node (L1C3) [v:ghost,position=0:34.5mm from L1C2] {};
				
				\node (L1C3label) [v:ghost,position=270:19mm from L1C3] {$\mathscr{M}_{14}$};
				
				\node (1-3v1) [v:main,position=0:13mm from L1C3] {};
				\node (1-3v2) [v:mainempty,position=25.7143:13mm from L1C3] {};
				\node (1-3v3) [v:main,position=51.4286:13mm from L1C3] {};
				\node (1-3v4) [v:mainempty,position=77.1429:13mm from L1C3] {};
				\node (1-3v5) [v:main,position=102.857:13mm from L1C3] {};
				\node (1-3v6) [v:mainempty,position=128.571:13mm from L1C3] {};
				\node (1-3v7) [v:main,position=154.286:13mm from L1C3] {};
				\node (1-3v8) [v:mainempty,position=180:13mm from L1C3] {};
				\node (1-3v9) [v:main,position=205.714:13mm from L1C3] {};
				\node (1-3v10) [v:mainempty,position=231.429:13mm from L1C3] {};
				\node (1-3v11) [v:main,position=257.143:13mm from L1C3] {};
				\node (1-3v12) [v:mainempty,position=282.857:13mm from L1C3] {};
				\node (1-3v13) [v:main,position=308.571:13mm from L1C3] {};
				\node (1-3v14) [v:mainempty,position=334.286:13mm from L1C3] {};
				
				\node (L1C4) [v:ghost,position=0:20mm from L1C3] {$\cdots$};
				
			\end{pgfonlayer}
			
			
			\begin{pgfonlayer}{background}
				
				\draw [e:main,bend right=20] (1-1v1) to (1-1v2);
				\draw [e:main,bend right=20] (1-1v2) to (1-1v3);
				\draw [e:main,bend right=20] (1-1v3) to (1-1v4);
				\draw [e:main,bend right=20] (1-1v4) to (1-1v5);
				\draw [e:main,bend right=20] (1-1v5) to (1-1v6);
				\draw [e:main,bend right=20] (1-1v6) to (1-1v1);
				
				\draw [e:main,bend right=12] (1-2v1) to (1-2v2);
				\draw [e:main,bend right=12] (1-2v2) to (1-2v3);
				\draw [e:main,bend right=12] (1-2v3) to (1-2v4);
				\draw [e:main,bend right=12] (1-2v4) to (1-2v5);
				\draw [e:main,bend right=12] (1-2v5) to (1-2v6);
				\draw [e:main,bend right=12] (1-2v6) to (1-2v7);
				\draw [e:main,bend right=12] (1-2v7) to (1-2v8);
				\draw [e:main,bend right=12] (1-2v8) to (1-2v9);
				\draw [e:main,bend right=12] (1-2v9) to (1-2v10);
				\draw [e:main,bend right=12] (1-2v10) to (1-2v1);
				
				\draw [e:main,bend right=7] (1-3v1) to (1-3v2);
				\draw [e:main,bend right=7] (1-3v2) to (1-3v3);
				\draw [e:main,bend right=7] (1-3v3) to (1-3v4);
				\draw [e:main,bend right=7] (1-3v4) to (1-3v5);
				\draw [e:main,bend right=7] (1-3v5) to (1-3v6);
				\draw [e:main,bend right=7] (1-3v6) to (1-3v7);
				\draw [e:main,bend right=7] (1-3v7) to (1-3v8);
				\draw [e:main,bend right=7] (1-3v8) to (1-3v9);
				\draw [e:main,bend right=7] (1-3v9) to (1-3v10);
				\draw [e:main,bend right=7] (1-3v10) to (1-3v11);
				\draw [e:main,bend right=7] (1-3v11) to (1-3v12);
				\draw [e:main,bend right=7] (1-3v12) to (1-3v13);
				\draw [e:main,bend right=7] (1-3v13) to (1-3v14);
				\draw [e:main,bend right=7] (1-3v14) to (1-3v1);
				
				\draw [e:main] (1-1v1) to (1-1v4);
				\draw [e:main] (1-1v2) to (1-1v5);
				\draw [e:main] (1-1v3) to (1-1v6);
				
				\draw [e:main] (1-2v1) to (1-2v6);
				\draw [e:main] (1-2v2) to (1-2v7);
				\draw [e:main] (1-2v3) to (1-2v8);
				\draw [e:main] (1-2v4) to (1-2v9);
				\draw [e:main] (1-2v5) to (1-2v10);
				
				\draw [e:main] (1-3v1) to (1-3v8);
				\draw [e:main] (1-3v2) to (1-3v9);
				\draw [e:main] (1-3v3) to (1-3v10);
				\draw [e:main] (1-3v4) to (1-3v11);
				\draw [e:main] (1-3v5) to (1-3v12);
				\draw [e:main] (1-3v6) to (1-3v13);
				\draw [e:main] (1-3v7) to (1-3v14);
				
			\end{pgfonlayer}	
		\end{tikzpicture}
	\end{center}
	\caption{The odd M\"obius ladders. They build a chain with respect to the matching minor relation due to \cref{lemma:moebiuschain}.}
	\label{fig:macuaigbraces}
\end{figure}

The odd M\"obius ladders builds a chain with respect to the matching minor relation.

\begin{lemma}\label{lemma:moebiuschain}
Let $k\in\N$ be a positive integer.
Then $\mathscr{M}_{4k+2}$ is a matching minor of $\mathscr{M}_{4k+6}$.
\end{lemma}

\begin{proof}
Consider $\mathscr{M}_{4k+6}$ and the edge $v_0^1v_{k+2}^2$.
In $\mathscr{M}_{4k+6}-v_0^1v_{k+2}^2$ the vertices $v_0^1$ and $v_{k+2}^2$ are the only two vertices of degree two and thus we may bicontract both of them.
As $v_0^1$ and $v_{k+2}^2$ come from different colour classes, their neighbourhoods are disjoint and so the resulting graph, let us call it $B$, has $4k+2$ vertices.
Moreover, let us denote by $u^1$ the vertex obtained by identifying $v_{2k+2}^2$, $v_0^1$, and $v_0^2$ and by $u^2$ the vertex obtained from the identification of $v_{k+2}^1$, $v_{k+2}^2$, and $v_{k+3}^1$.
Observe that
\begin{align*}
	\Brace{u^1,v_1^1,v_1^2,\dots,v_{k+1}^2,u^2,v_{k+3}^2,\dots,v_{2k+2}^1,u^1}
\end{align*}
is still a Hamilton cycle of $B$.
Indeed, for each $i\in{0,\dots,2k+2}\setminus\Set{0,k+2,k+3}$ we still have the edge $v_i^1v_{k+i\Brace{\mod 2k+3}}$ in $B$.
Additionally we obtain the edge $u^1u^2$.
By renumbering it becomes apparent that $B$ is indeed isomorphic to $\mathscr{M}_{4k+2}$.
\end{proof}

We make use of the following two results by McCuaig.

\begin{lemma}[{\cite[Lemma 57]{mccuaig2004polya}}]
    \label{lem:K33_implies_moebius_ladder}
    Let $B$ be a bipartite graph with a perfect matching $M$ and a bisubdivision of $K_{3,3}$ as conformal subgraph.
    Then $G$ has an $\mathscr{M}_{4n+2}$ bisubdivision as $M$-conformal subgraph for some $n \geq 1.$
\end{lemma}

\begin{theorem}[{\cite[Theorem 29]{mccuaig2001brace}}]
    \label{thm:cube_or_K33}
    Every brace except $C_4$ has a bisubdivision of $K_{3,3}$ or the cube as conformal subgraph.
\end{theorem}

Together they imply the following statement.

\begin{corollary}
    \label{cor:M10}
    Let $B$ be bipartite matching covered graph.
    The graph $B$ does not contain the cube or the M\"obius ladder $\mathscr{M}_{10}$ as a matching minor if and only if all braces of $B$ are isomorphic to $C_4$ or $K_{3,3}.$
\end{corollary}
\begin{proof}
    Assume $B$ has a brace that is not isomorphic to $C_4$ or $K_{3,3}.$
    Then, by \cref{thm:cube_or_K33} $B$ contains a bisubdivision of $K_{3,3}$ or the cube as conformal subgraph.
    If $B$ contains the cube as conformal subgraph, we are done, thus assume $B$ contains $K_{3,3}$ as conformal subgraph.
    Then, by \cref{lem:K33_implies_moebius_ladder}, $B$ contains an $\mathscr{M}_{4n+2}$ bisubdivision as conformal subgraph for some $n \geq 1.$
    By, \cref{lemma:moebiuschain} this implies that $B$ contains $\mathscr{M}_{10}$ as matching minor.

    For the reverse direction assume that $B$ contains the cube or $\mathscr{M}_{10}$ as a matching minor.
    As they are braces, there is a brace $J$ of $B$ containing the cube or $\mathscr{M}_{10}$ as a matching minor.
    Thus, $J$ has to be of size at least 8 and cannot be isomorphic to $C_4$ or $K_{3,3}.$
\end{proof}

\Cref{thm:mwidth2graphs,cor:M10} now immediatly yield \cref{thm:Mpmw2}.

\cubeormoebiustheorem*

\bibliographystyle{alpha}
\bibliography{literature}

\end{document}